\newtheorem{theorem}{Theorem}[section]
\newtheorem{lemma}[theorem]{Lemma}
\newtheorem{proposition}[theorem]{Proposition}
\theoremstyle{definition}
\theoremstyle{remark}
\newtheorem{remark}[theorem]{Remark}
\newcommand{\bR}{\mathbb R}
\newcommand\cL{\mathcal{L}}
\def\bH{\mathbb{H}}
\def\cH{\mathcal{H}}
\newcommand{\esssup}{\operatorname*{ess\,sup}}
\renewcommand{\epsilon}{\varepsilon}
\begin{document}
\title[Partial regularity for NSE]{Partial Regularity of solutions to the High dimensional Navier-Stokes Equations near boundary}
\author[H. Dong]{Hongjie Dong}
\address[H. Dong]{Division of Applied Mathematics, Brown University,
182 George Street, Providence, RI 02912, USA}
\email{Hongjie\_Dong@brown.edu}

\author[X. Gu]{Xumin Gu}
\address[X. Gu]{School of Mathematical Sciences, Fudan University,
Shanghai 200433, People's Republic of China}
\email{xumin\_gu@brown.edu;11110180030@fudan.edu.cn}

\begin{abstract}
We consider suitable weak solutions of the incompressible Navier--Stokes equations in two cases: the 4D time-dependent case and the 6D stationary case. We prove that up to the boundary, the two-dimensional Hausdorff measure of the set of singular points is equal to zero in both cases.
\end{abstract}

\keywords{Navier-Stokes equations, partial regularity, boundary estimates}

\subjclass[2010]{35Q30, 35B65, 76D05}

\maketitle

\section{Introduction}
In this paper we consider two cases of the high dimensional incompressible Navier--Stokes equations with unit viscosity and an external force:
the 4D time-dependent case
\begin{equation}
\label{ns}
\left\{\begin{aligned}
u_t +u\cdot \nabla u -\Delta u + \nabla p &= f,\\
\nabla \cdot u &= 0
\end{aligned}\right.
\end{equation}
in a cylindrical domain $Q_T:=\Omega\times(0,T)$, where $\Omega \subset \mathbb{R}^4$ is smooth, and the 6D stationary case
\begin{equation}
\label{ns6d}
\left\{\begin{aligned}
u\cdot \nabla u -\Delta u + \nabla p &= f,\\
\nabla \cdot u &= 0
\end{aligned}\right.
\end{equation}
in a smooth domain $\Omega \subset \bR^6$.

In both cases, we assume that $u$ satisfies the zero Dirichlet boundary condition:
\begin{equation}
\label{bd}
u=0,\quad \forall\,\,x\in \partial\Omega.
\end{equation}
We are interested in the partial regularity of suitable weak solutions $(u,p)$ to \eqref{ns} or \eqref{ns6d} up to the boundary.

We say that a pair of functions $(u,p)$ is a suitable weak solution to \eqref{ns} in $Q_T$ with the boundary condition \eqref{bd} if $u \in L_{\infty}(0,T;L_2(\Omega))\cap L_{2}(0,T;W^1_2(\Omega))$ and $p \in L_{3/2}(Q_T)$ satisfy \eqref{ns} in the weak sense and additionally the generalized local energy inequality holds for any non-negative functions $\psi \in C^{\infty}(\bar\Omega\times(0,T])$ and $t\in (0,T]$:
\begin{multline}
\esssup_{0<s\leq t}\int_{\Omega}|u(x,s)|^2\psi(x,s) \,dx +2\int_{Q_t}|\nabla u|^2\psi \,dx\,ds\\
\leq \int_{Q_t}|u|^2(\psi_t+\Delta \psi)+(|u|^2+2p)u\cdot \nabla \psi +2f\cdot u \psi \,dx\, ds.
\label{energy}
\end{multline}
Similarly, we call $(u,p)$ a suitable weak solution to \eqref{ns6d} with the boundary condition \eqref{bd} if $u \in H^1(\Omega)$ and $p \in L_{3/2}(\Omega)$  satisfy \eqref{ns6d} in the weak sense and the generalized local energy inequality holds for any non-negative functions $\psi \in C^{\infty}(\bar \Omega)$:
\begin{equation*}
2\int_{\Omega}|\nabla u|^2\psi \,dx
\leq \int_{\Omega}(|u|^2\Delta \psi+(|u|^2+2p)u\cdot \nabla \psi +2f\cdot u \psi) \,dx.
\end{equation*}

One of our main results is that, for any suitable weak solution $(u,p)$ to \eqref{ns} with the boundary condition \eqref{bd}, the two dimensional space-time Hausdorff measure of the set of singular points up to the boundary is equal to zero. We also prove a similar result for the 6D stationary case.

The problem of the global regularity of solutions to the Navier--Stokes equations in three or higher space dimensions is a fundamental question in the theory of fluid equations and is still widely open. Meanwhile, many authors have studied the partial regularity of solutions. In the three dimensional case, Scheffer established various regularity results for weak solutions in \cite{Scheffer_76, Scheffer_77}. In a celebrated paper \cite{CKN_82}, Caffarelli, Kohn, and Nirenberg firstly introduced the notion of suitable weak solutions, which satisfy a generalized local energy inequality and $p\in L_{5/4}$. They proved that for any suitable weak solution, there is an open subset where the velocity field $u$ is regular and the 1D Hausdorff measure of the complement of this subset is equal to zero. In \cite{Lin_98}, Lin gave a more direct and simplified proof of Caffarelli, Kohn, and Nirenberg's result, assuming that $p\in L_{3/2}$ and the  external force is zero. Ladyzhenskaya and Seregin \cite{Lady_99} gave another short but detailed proof of the results in \cite{CKN_82, Lin_98}. We also refer readers to Tian and Xin \cite{Tian_99}, Katz and Pavlovi\'c
\cite{KP02}, Seregin \cite{Seregin_06}, Gustafson, Kang, and Tsai \cite{Gus_07}, Vasseur \cite{Vasseur_07}, Kukavica \cite{Kukavica_08}, and the references therein for extended results. The key step in the proofs of partial regularity results is to establish certain $\varepsilon$-regularity criteria, which have many other important applications. For instance, they were crucially used by Escauriaza, Seregin, and \v Sver\'ak \cite{ESS} to prove the regularity of $L_{3,\infty}$-solutions to the 3D Navier--Stokes equations, and recently by Jia and \v Sver\'ak \cite{JS12} to construct forward self-similar solutions from arbitrary $(-1)$-homogeneous initial data.

For the four or higher dimensional Navier--Stokes equations, the problem is more super-critical. We refer readers to Scheffer \cite{Scheffer_78}, Dong and Du \cite{Dong_07} for some results for the 4D time-dependent case. For the 5D stationary case which is dimensionally analogous to the 3D time-dependent problem, Struwe \cite{struwe_88} proved that suitable weak solutions are regular outside a singular set of zero 1D Hausdorff measure. The existence of regular solutions to the stationary Navier-Stokes in high dimensions has been studied by several authors. We only refer the reader to \cite{Str95, FR96,FS09} and the references therein.

Recently, Dong and Strain \cite{Dong_12} studied the interior partial regularity for suitable weak solutions of the 6D stationary Navier--Stokes equations and proved that solutions are regular outside a singular set of zero 2D Hausdorff measure. This result was extended to the 4D time-dependent Navier--Stokes equations in \cite{Dong_13}. The main idea in \cite{Dong_12, Dong_13} is to first establish a weak decay estimate of certain scale-invariant quantities, and then successively improve this decay estimate by a bootstrap argument and the elliptic or parabolic regularity theory. The proofs therefore do not involve any compactness argument. The compactness arguments in the blowup procedure used, for instance, in  \cite{Lin_98, Lady_99} break down in the higher dimensional cases.

Partial regularity up to the boundary was studied by Scheffer \cite{Sch82} for the 3D time-dependent Navier--Stokes equations, who proved that at each time slice, $u$ is locally bounded up to the boundary except for a closed set whose 1D Hausdorff measure is finite.
Caffarelli, Kohn, and Nirenberg's partial regularity result for the 3D time-dependent Navier--Stokes equations was extended up to the flat boundary by Seregin \cite{Seregin_02} and to the $C^2$ boundary by Seregin, Shilkin, and  Solonnikov \cite{Seregin_04}.  In the 5D stationary case, Kang \cite{kang_06} proved the partial regularity up to the boundary, which extended the result by Struwe \cite{struwe_88}. See also recent Wolf \cite{Wolf_11}, Mikhailov \cite{Mikhailov_11}, and the references therein. Motivated by these paper, the objective of the current paper is to extend the aforementioned interior estimates in \cite{Dong_12, Dong_13} to the boundary case.

Next we state our main results for the 4D time-dependent case and the 6D stationary case. For the 4D time-dependent case, we have the following two boundary $\varepsilon$-regularity criteria when the boundary is locally flat. The notation in Theorems \ref{mainthm} and \ref{th2} are introduced in Section \ref{s1}.
\begin{theorem}
\label{mainthm}
Let $\Omega$ be a domain in $\mathbb{R}^4$ and $f \in L_{6}(Q_T)$.  Let $(u,p)$ be a suitable weak solution of \eqref{ns} in $Q_T$ with the boundary condition \eqref{bd}. There is a positive number $\epsilon_0$ satisfying the following property. Assume that for a point ${\hat z}=({\hat x},{\hat t})$, where ${\hat x}\in \partial\Omega$, we have $\omega({\hat z},R)=Q^{+}({\hat z},R)$ for some small $R$ and the inequality
\begin{equation*}
\limsup_{r\searrow 0} E(r) \leq \epsilon_0
\end{equation*}
holds. Then $u$ is H\"older continuous near ${\hat z}$.
\end{theorem}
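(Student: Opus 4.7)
The plan is to adapt the interior-estimate strategy of \cite{Dong_12, Dong_13} to the flat-boundary setting. The essential feature to preserve is the replacement of compactness/blow-up arguments, which fail in this supercritical regime, by a quantitative iteration of scale-invariant decay estimates.

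First, I would use the generalized local energy inequality \eqref{energy} together with the hypothesis $\limsup_{r\searrow 0} E(r) \leq \epsilon_0$ to show that, for all sufficiently small $r$, several associated scale-invariant quantities on the half-cylinders $Q^{+}(\hat z, r)$ are simultaneously small: typically $r^{-2} \int_{Q^{+}(\hat z,r)} |\nabla u|^2$, $r^{-3} \int_{Q^{+}(\hat z,r)} |u|^3$, $r^{-3} \int_{Q^{+}(\hat z,r)} |p|^{3/2}$, together with a force term. This step requires a boundary-adapted pressure decomposition: split $p$ locally into a part satisfying a harmonic-type estimate and a remainder controlled by $u$, using the Stokes system with the Dirichlet condition \eqref{bd} on the flat portion of $\partial \Omega$.

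The core step is a weak decay estimate: there exist $\theta \in (0,1)$ and $\gamma, \alpha > 0$ such that for all sufficiently small $r$,
\begin{equation*}
E(\theta r) \leq \theta^{\gamma} E(r) + C \theta^{-\sigma} r^{\alpha}
\end{equation*}
for some $\sigma > 0$, where the $r^{\alpha}$ term absorbs the contribution of $f \in L_6$. To prove this, I would compare $u$ on $Q^{+}(\hat z, r)$ with the solution $v$ of the linear inhomogeneous Stokes problem carrying the same initial, boundary, and force data. The linear piece $v$ enjoys Campanato-type decay up to the flat boundary from classical boundary regularity theory for the Stokes system, while the remainder $u - v$ is controlled via the nonlinear convective term $u \cdot \nabla u$, whose smallness in the appropriate scale-invariant norm comes from the previous step. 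Iterating this inequality yields polynomial decay of $E(r)$, and a subsequent bootstrap, again through the boundary regularity of the Stokes system, progressively improves the decay exponent to a Morrey-type estimate strong enough that Campanato's characterization delivers H\"older continuity of $u$ near $\hat z$.

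The main obstacle will be the boundary pressure analysis. Unlike in the interior, the ``harmonic'' part of the pressure does not inherit a clean boundary condition from $u = 0$ on $\partial \Omega$, so one cannot simply appeal to the Newtonian potential together with cut-off mollification. The substitute is a local Stokes problem with flat-boundary data, whose normal-derivative components must be carefully estimated so that all constants in the iteration remain independent of the scale $r$. A secondary difficulty is that $f \in L_6$ is only barely subcritical in the four-dimensional parabolic scaling, so the force contributions must be tracked precisely through every step of the bootstrap in order to preserve the strict power decay $r^{\alpha}$.
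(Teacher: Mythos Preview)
Your outline is broadly correct and matches the paper's architecture (smallness of all scale-invariant quantities $\Rightarrow$ weak decay $\Rightarrow$ bootstrap $\Rightarrow$ Campanato), and you correctly identify the boundary pressure as the crux. But your ``core step'' contains a genuine gap.

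You propose a fixed-ratio decay inequality $E(\theta r)\le \theta^{\gamma}E(r)+C\theta^{-\sigma}r^{\alpha}$ and plan to iterate it. Near the boundary this does \emph{not} close, and the paper says so explicitly. With Seregin's decomposition (which is essentially the Stokes splitting you describe), the pressure estimate one obtains is
\[
D(\gamma\rho)\;\lesssim\;\gamma^{-3}A^{1/2}(\rho)E(\rho)+\gamma^{9/4}\big(D(\rho)+A^{3/4}(\rho)+E^{3/4}(\rho)\big)+\gamma^{-3}G^{3/4}(\rho),
\]
see Lemma~\ref{D_est_lemma}. The terms $A^{3/4}+E^{3/4}$ are \emph{linear} in $u$ (in the interior one gets quadratic control, roughly $p\sim |u|^2$; here $p\sim |u|$). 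If you iterate at a fixed ratio $\theta$ and have $A,E\sim\rho^{\alpha}$, the right-hand side only yields $D\lesssim \rho^{3\alpha/4}$, which is \emph{worse} than $\rho^{\alpha}$: the iteration deteriorates rather than improves. So the single inequality you wrote cannot produce polynomial decay of the full family $(A,E,C,D)$; the pressure quantity $D$ blocks it.

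The paper's remedy is a genuinely new iteration: one works on super-geometrically shrinking scales $\rho_k=\rho_1^{(1+\beta)^k}$ with $\beta$ small, applies Lemma~\ref{eeeeee} with the \emph{scale-dependent} ratio $\gamma=\rho_k^{\beta}$ to push $A+E$ forward, and---this is the key trick---estimates $D(\rho_{k+1})$ not from $D(\rho_k)$ but from data at $\rho_{k-2}$, so that the effective ratio $\gamma=\rho_{k-2}^{\tilde\beta}$ is small enough for the $\gamma^{9/4}$ prefactor to absorb the $A^{3/4}+E^{3/4}$ loss (Proposition~\ref{prop2}). Your proposal is missing this mechanism; without it the weak decay step fails at the boundary. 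A secondary omission: the bootstrap alone only reaches $E(\rho)\lesssim\rho^{2-\delta}$, which is below the Campanato threshold in dimension four; the paper closes this last gap by a separate parabolic-regularity step (splitting $u$ via the heat equation, not the Stokes system) to obtain the $L_{3/2}$ mean-oscillation bound \eqref{eq12.30}.
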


\begin{theorem}
\label{th2}
Let $\Omega$ be a domain in $\mathbb{R}^4$ and $f \in L_{6}(Q_T)$. Let $(u,p)$ be a suitable weak solution of \eqref{ns} in $Q_T$ with the boundary condition \eqref{bd}.
There is a positive number $\epsilon_0$ satisfying the following property. Assume that for a point ${\hat z}=({\hat x},{\hat t})$, where ${\hat x}\in \partial\Omega$, we have $\omega({\hat z},R)=Q^{+}({\hat z},R)$ for some small $R$, and for some $\rho_0 >0$ we have
\begin{equation*}
C(\rho_0)+D(\rho_0)+G(\rho_0) \leq \epsilon_0.
\end{equation*}
Then $u$ is H\"older continuous near ${\hat z}$.
\end{theorem}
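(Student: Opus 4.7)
The overall plan is to reduce Theorem~\ref{th2} to Theorem~\ref{mainthm}: starting from the smallness of $C(\rho_0)+D(\rho_0)+G(\rho_0)$ at one single scale, I will show that the quantity
\begin{equation*}
\Phi(r):=C(r)+D(r)+G(r)
\end{equation*}
decays geometrically as $r\searrow 0$ and, in particular, that $\limsup_{r\searrow 0}E(r)\le\epsilon_0$, whereupon Theorem~\ref{mainthm} immediately delivers H\"older continuity of $u$ near $\hat z$. No compactness/blow-up will be needed; everything comes from one-step iteration estimates, in the spirit of \cite{Dong_12,Dong_13}.

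The technical core is to produce a one-step iteration inequality for $\Phi$. First, I would test the generalized local energy inequality \eqref{energy} against a non-negative cut-off $\psi$ supported in $Q^+(\hat z,r)$ and equal to $1$ on $Q^+(\hat z,\theta r)$. The flat boundary hypothesis $\omega(\hat z,R)=Q^+(\hat z,R)$ and the Dirichlet condition $u=0$ on $\partial\Omega$ kill the boundary traces generated by integration by parts, giving a boundary Caccioppoli-type bound of the shape
\begin{equation*}
A(\theta r)+E(\theta r)\le c\,\theta^{-a}\bigl(C(r)^{2/3}+C(r)+C(r)^{1/3}D(r)^{2/3}+G(r)\bigr).
\end{equation*}
Coupled with a dimension-four interpolation of the form $C(\rho)\le c\bigl(A(\rho)^{\alpha}E(\rho)^{1-\alpha}+\text{lower order}\bigr)$, this yields a bound for $C(\theta r)$ in terms of $\Phi(r)$. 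Next, I would decompose the pressure on a slightly smaller half-cylinder as $p=p_h+p_r$, where $p_h$ solves a homogeneous elliptic/Stokes boundary value problem with data inherited from $p$, and $p_r$ absorbs the nonlinearity $\partial_i\partial_j(u_iu_j)$ together with $\operatorname{div} f$. A Campanato-type mean-value estimate for $p_h$ up to the flat boundary and an $L_{3/2}$-solvability estimate for $p_r$ should then deliver
\begin{equation*}
D(\theta r)\le c\,\theta\, D(r)+c\,\theta^{-b}\bigl(C(r)+G(r)\bigr).
\end{equation*}

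Adding the bounds from the preceding step and using $G(\theta r)\le c\,\theta^{\kappa}G(r)$ for some $\kappa>0$ (since $f\in L_6$ is subcritical in four dimensions), one obtains, after fixing $\theta$ small enough to beat the constant $c$ in front of $\theta D(r)$, a closed inequality
\begin{equation*}
\Phi(\theta r)\le \tfrac12\,\Phi(r)+c\,\theta^{-\gamma}\Phi(r)^{1+\delta}.
\end{equation*}
Choosing $\epsilon_0$ below the resulting absorption threshold, a standard induction over $r=\theta^{k}\rho_0$ forces $\Phi(\theta^{k}\rho_0)\to 0$, and then the Caccioppoli bound yields the required smallness of $E$ at all sufficiently small scales. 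The step I expect to be the main obstacle is the boundary pressure decay: unlike in the interior setting, near the flat boundary one cannot adjust $p$ by an additive constant in the convenient way, so one must carefully select the right boundary value problem for $p_h$ (with data determined by the full Stokes system, not by the scalar $p$ alone) and establish a boundary Campanato estimate producing the genuine contraction factor $\theta$ in front of $D(r)$. Without this contraction, the iteration driving the whole argument simply does not close.
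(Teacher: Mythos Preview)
Your plan has two gaps, one logical and one analytic.

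First, the reduction is circular. In this paper Theorem~\ref{mainthm} is \emph{derived from} Theorem~\ref{th2} via Proposition~\ref{prop1}, not the other way around; so invoking Theorem~\ref{mainthm} to prove Theorem~\ref{th2} is not permitted unless you supply an independent proof of Theorem~\ref{mainthm}, which you do not.

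Second, and more seriously, the pressure iteration you propose does not close near the flat boundary. You expect an estimate of the interior type
\[
D(\theta r)\le c\,\theta\,D(r)+c\,\theta^{-b}\bigl(C(r)+G(r)\bigr),
\]
but the boundary decomposition (Seregin's Stokes decomposition, carried out in Lemma~\ref{D_est_lemma}) only yields
\[
D(\gamma\rho)\le N\Big[\gamma^{-3}A^{1/2}(\rho)E(\rho)+\gamma^{9/4}\bigl(D(\rho)+A^{3/4}(\rho)+E^{3/4}(\rho)\bigr)+\gamma^{-3}G^{3/4}(\rho)\Big].
\]
The extra terms $\gamma^{9/4}A^{3/4}(\rho)$ and $\gamma^{9/4}E^{3/4}(\rho)$ come from the fact that the ``homogeneous'' piece $p_2$ is now governed by a Stokes system whose right-hand side carries $u$ and $\nabla u$, not just $p$; see \eqref{eq5.43}. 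These terms are \emph{sublinear} in $A+E$ (and hence in $\Phi$ once you feed in the energy inequality), so an iteration with a fixed $\theta$ of the form $\Phi(\theta r)\le \tfrac12\Phi(r)+c\,\Phi(r)^{1+\delta}$ simply cannot be obtained: when $\Phi$ is small, $\Phi^{1/2}$ is \emph{larger} than $\Phi$, and no choice of a fixed $\theta$ absorbs it. This is exactly the difficulty flagged in the introduction (``$p$ is corresponding to $|u|$ in the new decomposition, instead of $|u|^2$'').

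The paper's remedy is different from what you outline: instead of a fixed contraction ratio, Proposition~\ref{prop2} runs an induction along a scale-dependent sequence $\rho_{k+1}=\rho_k^{1+\beta}$, taking $\gamma=\rho_k^{\beta}$ (and for $D$ even skipping back several steps), which converts the sublinear terms into a genuine power decay $A+E+C^{2/3}+D\le N\rho^{\alpha_0}$. From there, the paper does not reduce to Theorem~\ref{mainthm}; it bootstraps the exponent $\alpha_k\nearrow 2$ and then uses linear parabolic regularity to push the mean-oscillation of $u$ past the Campanato threshold. You should either reproduce that scale-dependent iteration, or find a genuinely different boundary pressure estimate that eliminates the $A^{3/4}+E^{3/4}$ terms; the latter is not known.
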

We note that according to our definitions of $\omega({\hat z},R)$ and $Q^{+}({\hat z},R)$ in Section \ref{s1}, the condition that $\omega({\hat z},R)=Q^{+}({\hat z},R)$ for some small $R$ implies that the boundary is locally flat near the point ${\hat x}$.

Our next result is regarding the partial regularity of suitable weak solutions up to the boundary.

\begin{theorem}
\label{th3}
Let $\Omega$ be a domain in $\mathbb{R}^4$ with uniform $C^2$ boundary and $f \in L_{6}(Q_T)$. Let $(u,p)$ be a suitable weak solution of \eqref{ns} in $Q_T$ with the boundary condition \eqref{bd}. Then up to the boundary, the 2D Hausdorff measure of the set of singular points is equal to zero.
\end{theorem}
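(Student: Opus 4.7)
The plan is to deduce Theorem \ref{th3} from the boundary $\epsilon$-regularity criterion of Theorem \ref{mainthm} together with the interior partial regularity result of \cite{Dong_13}, via a Vitali-type covering argument. The singular set $\Sigma \subset \overline{Q_T}$ splits as $\Sigma = \Sigma_{\mathrm{int}} \cup \Sigma_{\partial}$ with $\Sigma_{\mathrm{int}} := \Sigma \cap (\Omega \times (0,T])$ and $\Sigma_{\partial} := \Sigma \cap (\partial\Omega \times (0,T])$. The interior set $\Sigma_{\mathrm{int}}$ has vanishing two-dimensional parabolic Hausdorff measure by \cite{Dong_13}, so it suffices to show $\cH^{2}(\Sigma_{\partial}) = 0$.

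The first step is to reduce to the case of a flat boundary. For each ${\hat x}\in \partial\Omega$, uniform $C^{2}$ regularity of $\partial\Omega$ allows a local $C^{2}$ diffeomorphism $\Phi$ mapping a neighborhood of ${\hat x}$ onto a half-ball and straightening $\partial\Omega$ into $\{x_{4}=0\}$. Pushing $(u,p)$ through $\Phi$ produces a pair $(\tilde u,\tilde p)$ that solves a perturbed Navier--Stokes system whose principal parabolic operator has smooth variable coefficients close to the identity, and whose additional terms, coming from $\nabla\Phi$ and $\nabla^{2}\Phi$, can be absorbed into a modified body force that remains in $L_{6}$. The generalized local energy inequality \eqref{energy} transfers, modulo controlled lower order errors, to the straightened coordinates, so $(\tilde u,\tilde p)$ is again a suitable weak solution in the half-cylinder sense. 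Moreover, the scale invariant quantity $E(r)$ changes only by a multiplicative constant depending on $\|\Phi\|_{C^{2}}$, so the smallness hypothesis of Theorem \ref{mainthm} is preserved up to adjusting $\epsilon_{0}$. Consequently, every point in $\Sigma_{\partial}$ corresponds to a flat-boundary singular point at which
\begin{equation*}
\limsup_{r\searrow 0} E(r) > \epsilon_{0}.
\end{equation*}

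A parabolic Vitali covering argument then concludes the proof. For each $\delta>0$ and each ${\hat z}\in \Sigma_{\partial}$ pick $r({\hat z})<\delta$ with $E(r({\hat z}))>\epsilon_{0}$. Vitali yields a disjoint subfamily $\{Q^{+}({\hat z}_{k},r_{k})\}_{k}$ whose five-fold dilations cover $\Sigma_{\partial}$. By the lower bound on $E$ and disjointness,
\begin{equation*}
\epsilon_{0}\sum_{k} r_{k}^{2} \leq \sum_{k}\int_{Q^{+}({\hat z}_{k},r_{k})} |\nabla u|^{2}\,dx\,dt \leq \|\nabla u\|_{L_{2}(Q_{T})}^{2} <\infty.
\end{equation*}
Hence $\sum_{k} r_{k}^{2}$ is finite, so the $5$-dimensional Lebesgue measure of $\bigcup_{k} Q^{+}({\hat z}_{k},r_{k})$ is bounded by $C\sum_{k} r_{k}^{6}\leq C\delta^{4}\sum_{k} r_{k}^{2}$, which tends to $0$ as $\delta\searrow 0$. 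Absolute continuity of $\int|\nabla u|^{2}$ then forces $\sum_{k} r_{k}^{2}\to 0$, giving $\sum_{k}(5r_{k})^{2}\to 0$ and $\cH^{2}(\Sigma_{\partial})=0$.

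The main obstacle I anticipate is the rigorous verification that the boundary $\epsilon$-regularity statement of Theorem \ref{mainthm}, proved for a locally flat boundary, is genuinely stable under the $C^{2}$ diffeomorphism that straightens $\partial\Omega$. Specifically, one must collect the commutator terms generated by $\Phi$ in both the equation and the local energy inequality, control them by the natural scale invariant quantities, and show that $\epsilon_{0}$ can be chosen uniformly along a compact piece of $\partial\Omega$. Because $\partial\Omega\in C^{2}$, these commutators are of lower order in the parabolic scaling and can be absorbed at sufficiently small scales, but handling this absorption quantitatively, especially in the presence of the pressure term, is the technically delicate point.
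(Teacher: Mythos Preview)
Your proposal is correct and follows essentially the same route as the paper: extend the flat-boundary $\epsilon$-regularity criterion of Theorem~\ref{mainthm} to a $C^{2}$ boundary via a local straightening diffeomorphism (the paper cites \cite{Seregin_04, Mikhailov_11} for this), and then run the standard CKN/Vitali covering argument to conclude $\cH^{2}(\Sigma_{\partial})=0$. One small correction: the lower-order terms produced by the diffeomorphism cannot literally be ``absorbed into a modified body force that remains in $L_{6}$'' (they involve $\nabla\tilde u$, which is only in $L_{2}$); rather, as in \cite{Seregin_04} and as the paper indicates, one keeps the perturbed Stokes structure, verifies that Lemmas~\ref{lest_1} and~\ref{lest_2} remain valid when the $C^{1}$ norm of the diffeomorphism is small, and re-derives Lemmas~\ref{D_est_lemma}, \ref{eeeeee} and Proposition~\ref{prop2} for the perturbed system---exactly the technical point you correctly flag as the main obstacle.
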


We have similar results for the 6D stationary case. The notation in Theorems \ref{mainthm6d} and \ref{th26d} are introduced in Section \ref{6ds1}.
\begin{theorem}
\label{mainthm6d}
Let $\Omega$ be a domain in $\mathbb{R}^6$ and $f \in L_{6}(\Omega)$.  Let $(u,p)$ be a suitable weak solution of \eqref{ns6d} in $\Omega$ with the boundary condition \eqref{bd}. There is a positive number $\epsilon_0$ satisfying the following property. Assume that for a point ${\hat x} \in \partial\Omega$, $\Omega({\hat x},R)=B^{+}({\hat x},R)$ for some small $R$ and the inequality
\begin{equation*}
\limsup_{r\searrow 0} E(r) \leq \epsilon_0
\end{equation*}
holds. Then $u$ is H\"older continuous near ${\hat x}$.
\end{theorem}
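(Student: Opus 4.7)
\medskip

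\noindent\textbf{Proof proposal for Theorem \ref{mainthm6d}.}
The plan is to mirror the strategy developed for the interior 6D stationary case in \cite{Dong_12}, now adapted to the flat boundary situation, and to do so in essentially the same way Theorem \ref{mainthm} would be established in the 4D time--dependent boundary setting. Since $\omega(\hat x,R)=B^+(\hat x,R)$, after a translation we may assume $\hat x=0$ and work on half balls $B^+(r)$ with the flat portion of the boundary lying in the hyperplane $\{x_6=0\}$, on which $u$ vanishes. The goal is to establish a decay estimate for a scale--invariant quantity such as
\begin{equation*}
E(r)=r^{-2}\int_{B^+(r)}|\nabla u|^2\,dx
\end{equation*}
(together with the natural $C(r)$ and $D(r)$ for $|u|^3$ and $|p|^{3/2}$, and $G(r)$ involving $f$), and then to iterate it into Hölder continuity through a Campanato--type criterion.

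First I would set up the bookkeeping: write the local energy inequality with a cut--off $\psi$ adapted to $B^+(r)$, together with $\psi=0$ on $\{x_6=0\}$ not being forced (indeed $u=0$ on the flat part, so boundary traces disappear), and derive the analog of the Caccioppoli estimate in a half--ball. The pressure will be handled via the standard splitting $p=p_h+p_0$ on $B^+(\theta R)$, where $p_h$ solves a homogeneous Stokes/Laplace problem and absorbs the far--field contribution (so that $p_h$ enjoys interior derivative estimates and thus good decay on smaller balls), while $p_0$ is controlled locally by $|u|^2$ via boundary $L_{3/2}$ Stokes/Laplace estimates with the homogeneous Dirichlet condition for $u$ on the flat piece. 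This is the half--space version of the pressure decomposition used in the interior arguments of \cite{Dong_12}; the flatness of the boundary is what makes it available.

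Next I would prove the weak decay estimate, which is the heart of the matter. Assuming $\limsup_{r\searrow 0}E(r)\leq\varepsilon_0$, I fix $R$ so small that $E(R)\leq 2\varepsilon_0$ and, using Poincaré's inequality (valid in $B^+$ because $u$ vanishes on the flat boundary), upgrade this into smallness of $C(R)$; combining with the pressure splitting upgrades this into smallness of $D(R)$. Inserting these into the local energy inequality applied on $B^+(\theta R)$ with a suitable $\theta\in(0,1/2)$ and using Hölder, Sobolev, and the elementary inequality $ab\leq\epsilon a^2+C_\epsilon b^2$ absorb the cubic and pressure terms, yielding
\begin{equation*}
E(\theta R)\leq C\theta^{\alpha}E(R)+C\theta^{-\beta}\bigl(E(R)^{3/2}+E(R)\,G(R)+G(R)^2\bigr)
\end{equation*}
for some exponents $\alpha,\beta>0$. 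Choosing $\theta$ small and then $\varepsilon_0$ small gives $E(\theta R)\leq\theta^{\alpha/2}E(R)$; iteration produces a power decay $E(\theta^k R)\lesssim\theta^{k\alpha/2}$.

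Finally, I would bootstrap this weak decay by a Campanato--type argument and the boundary Stokes regularity theory, exactly as in the interior 6D case of \cite{Dong_12}: once $E(r)$, $C(r)$, and $D(r)$ decay as small positive powers of $r$, applying boundary $L_q$ estimates for the Stokes system and Sobolev embedding successively improves the decay exponent until one reaches a Morrey/Campanato exponent that embeds into $C^\alpha$, giving Hölder continuity of $u$ near $\hat x$. The main obstacle I expect is precisely the boundary pressure decomposition step: in the interior case one freely uses harmonic decay of $p_h$ on concentric balls, whereas near the flat boundary one must instead exploit the homogeneous Dirichlet condition for $u$ through a Stokes--type $L_{3/2}$ estimate on half--balls, and track the constants carefully so that the decay gained from $p_h$ on the flat side matches that in the interior. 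Once this ingredient is in place, the remainder is a routine adaptation of the interior bootstrap of \cite{Dong_12}, with half balls replacing balls and a reflection/extension argument when invoking boundary Stokes regularity.
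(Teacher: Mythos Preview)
Your overall architecture---pressure splitting via a Stokes decomposition, a weak decay estimate, and a bootstrap to Campanato regularity---matches the paper's. However, there is a genuine gap at the pressure step that you underestimate. You write that ``$p_0$ is controlled locally by $|u|^2$ via boundary $L_{3/2}$ Stokes/Laplace estimates,'' and later that the remaining difficulty is only to ``track the constants carefully.'' This is precisely where the boundary case diverges from the interior argument of \cite{Dong_12}. Near the boundary there is no boundary condition for $p$, so one cannot solve a Poisson problem for $p$ alone and invoke Calder\'on--Zygmund to bound the inhomogeneous part by $|u|^2$. Instead one must use Seregin's decomposition through the full Stokes system: solve an inhomogeneous Stokes problem with right-hand side $-u\cdot\nabla u+f$ to produce $(v,p_1)$, and let the remainder $(w,p_2)$ solve a homogeneous Stokes problem with zero Dirichlet data on the flat piece. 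The resulting estimate (Lemma \ref{D_est_lemma6}) has the shape
\[
D(\gamma\rho)\le N\bigl[\gamma^{-3}E^{3/4}(\rho)C^{1/2}(\rho)+\gamma^{7/2}\bigl(D(\rho)+E^{3/4}(\rho)\bigr)+\gamma^{-3}G^{3/4}(\rho)\bigr],
\]
and the key point is that the homogeneous part $p_2$ is bounded only by norms of $u$, $\nabla u$, and $p$---not by $|u|^2$---so an extra $E^{3/4}(\rho)$ term appears that was absent in the interior.

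This weaker pressure bound breaks the simple one-step iteration you propose. With the interior-type estimate $D\lesssim C$ one can close a loop of the form $E(\theta R)\le C\theta^{\alpha}E(R)+\ldots$ in a single pass; with the boundary estimate one cannot. The paper's remedy is a genuinely new iteration (Proposition \ref{prop2} and its 6D analogue): one works along super-geometric scales $\rho_k=\rho_1^{(1+\beta)^k}$ for a small $\beta>0$, proves inductively that $A,E,C^{2/3},D\le\rho_k^{1/10}$, and---crucially---bounds $D(\rho_{k+1})$ not from the data at step $k$ but from step $k-1$ (or $k-2$ in the 4D case), because the $E^{3/4}$ contamination in the $D$-estimate would otherwise spoil the induction. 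Only after this more delicate weak-decay step is in place does the bootstrap to H\"older regularity proceed as you describe. So your plan is right in outline, but the sentence ``once this ingredient is in place, the remainder is a routine adaptation'' hides exactly the nonroutine part.
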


\begin{theorem}
\label{th26d}
Let $\Omega$ be a domain in $\mathbb{R}^6$ and $f \in L_{6}(\Omega)$. Let $(u,p)$ be a suitable weak solution of \eqref{ns6d} in $\Omega$ with the boundary condition \eqref{bd}.
There is a positive number $\epsilon_0$ satisfying the following property. Assume that for a point ${\hat x}\in \partial\Omega$, $\Omega({\hat x},R)=B^{+}({\hat x},R)$ for some small $R$, and for some $\rho_0 >0$ we have
\begin{equation*}
C(\rho_0)+D(\rho_0)+G(\rho_0) \leq \epsilon_0.
\end{equation*}
Then $u$ is H\"older continuous near ${\hat x}$.
\end{theorem}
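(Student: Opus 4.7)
My plan is to reduce Theorem \ref{th26d} to Theorem \ref{mainthm6d} by showing that the one-scale smallness of $C(\rho_0)+D(\rho_0)+G(\rho_0)$ forces $E(r)$ to be uniformly small at all sufficiently small scales $r$ around $\hat x$, at which point the $E$-criterion applies.

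First I would establish an iteration inequality for the combined scale-invariant quantity $\Phi(\rho):=C(\rho)+D(\rho)$ at the flat boundary. For $C(\theta\rho)$, the zero Dirichlet condition on the flat face allows me to use the half-ball Sobolev--Poincar\'e inequality to interpolate $L_3$ between $L_2$ and $H^1$, giving a bound $C(\theta\rho)\leq C_1\theta^{\alpha}C(\rho)+C_1\theta^{-\beta}\Phi(\rho)^{3/2}$ for suitable $\alpha,\beta>0$. For $D(\theta\rho)$ I would split the pressure as $p=p_h+p_0$ on $B^{+}(\hat x,\rho)$, where $p_h$ is harmonic on the half-ball and $p_0$ solves a Poisson equation $\Delta p_0=-\partial_i\partial_j(u^iu^j)+\Div f$ with zero data on the curved part of $\partial B^{+}(\hat x,\rho)$. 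The harmonic part improves at the smaller scale $\theta\rho$ by reflection across the flat face combined with interior mean-value estimates, while the forced part is handled by boundary Calder\'on--Zygmund theory on the half-space. Together these give
\begin{equation*}
\Phi(\theta\rho)\leq C_1\theta\,\Phi(\rho)+C_1\theta^{-\gamma}\bigl(\Phi(\rho)^{3/2}+G(\rho)\bigr)
\end{equation*}
for any $\theta\in(0,1/2]$.

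Next I would iterate. Choose $\theta\in(0,1/2]$ so that $C_1\theta\leq 1/2$, then pick $\epsilon_0$ small enough, depending on $\theta$, so that the smallness $\Phi(\rho_0)+G(\rho_0)\leq\epsilon_0$ persists down the dyadic chain. A standard induction then yields $\Phi(\theta^k\rho_0)\leq\epsilon_1$ for all $k\geq 0$, and in fact $\Phi(\theta^k\rho_0)\to 0$ as $k\to\infty$ because $G(\theta^k\rho_0)\to 0$ under $f\in L_6$ and the scale-invariant definition of $G$. Applying the generalized local energy inequality with a cutoff supported in $B^{+}(\hat x,2\rho)$, the Dirichlet boundary condition lets me absorb the boundary flux and obtain $E(\rho)\leq C_2\bigl(\Phi(2\rho)+G(2\rho)\bigr)$, so that $\limsup_{r\searrow 0}E(r)\leq\epsilon_0$ provided $\epsilon_0$ was taken small enough at the start. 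Theorem \ref{mainthm6d} then yields H\"older continuity of $u$ near $\hat x$.

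The main obstacle is the boundary pressure decomposition. Unlike the interior case \cite{Dong_12}, in which $p$ splits cleanly via the Newtonian potential, near the flat boundary one must work with a Poisson-kernel / half-space decomposition and contend with the fact that the Navier--Stokes system prescribes no pointwise boundary value for $p$. Obtaining the decay of $p_h$ at scale $\theta\rho$ in $L_{3/2}$ at the correct rate, so that $D(\theta\rho)$ genuinely improves in $\theta$, requires reflecting or extending the harmonic part across the flat face and exploiting the improved interior regularity of harmonic functions up to that face. The rest of the argument is a bootstrap closely paralleling the interior strategy of Dong and Strain.
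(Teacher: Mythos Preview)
Your reduction is in the wrong direction: in this paper Theorem~\ref{mainthm6d} is \emph{derived from} Theorem~\ref{th26d} via Proposition~\ref{prop16}, so invoking Theorem~\ref{mainthm6d} to prove Theorem~\ref{th26d} would be circular. More importantly, there is a genuine gap in your boundary pressure decomposition. You propose splitting $p=p_h+p_0$ with $p_h$ harmonic on the half-ball and then gaining decay of $D(\theta\rho)$ by ``reflecting or extending the harmonic part across the flat face.'' But reflection of a harmonic function across a hyperplane preserves harmonicity only under a Dirichlet condition (odd reflection) or a Neumann condition (even reflection) on that face, and the Navier--Stokes system prescribes \emph{neither} for $p$ on $\{x_6=0\}$. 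Consequently you have no way to control $p_h$ at the smaller scale $\theta\rho$ with a factor that improves in $\theta$, and the claimed iteration inequality $\Phi(\theta\rho)\le C_1\theta\,\Phi(\rho)+C_1\theta^{-\gamma}(\Phi(\rho)^{3/2}+G(\rho))$ is not justified. The paper explicitly flags this obstacle in the introduction: the interior harmonic decomposition ``does not seem to be applicable near the boundary \dots\ because the boundary condition for $p$ is not prescribed.''

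The paper avoids this by using Seregin's decomposition: on a smooth subdomain $\tilde B$ with $B^{+}(1/2)\subset\tilde B\subset B^{+}$ one solves a full \emph{Stokes} problem (Lemma~\ref{lest_1}) with right-hand side $-u\cdot\nabla u+f$ and zero Dirichlet data for the velocity, obtaining $(v,p_1)$; the remainder $(w,p_2)=(u-v,\,p-p_1-[p]_{0,1/2})$ then satisfies a homogeneous Stokes system with $w=0$ on the flat face, and the higher local regularity Lemma~\ref{lest_2} yields $\|\nabla p_2\|_{L_s(B^{+}(1/4))}$ controlled by low norms of $u$, $\nabla u$, and $p-[p]_{0,1/2}$. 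This produces Lemma~\ref{D_est_lemma6}, whose good term is $\gamma^{7/2}D(\rho)$ rather than the $\theta\,D(\rho)$ you hoped for. The proof of Theorem~\ref{th26d} is then direct: an iteration on a geometric sequence of radii gives the initial weak decay (the analogue of Proposition~\ref{prop2}), a bootstrap improves the exponent toward $2$, and an elliptic-regularity step (splitting $u$ as a harmonic part plus a Poisson part, where the harmonic part of the \emph{velocity} does vanish on the flat face and so can be reflected) pushes the Morrey exponent past the threshold. Your overall plan of ``iterate to decay, then invoke an $\epsilon$-criterion'' is in spirit correct, but the crucial technical device is the Stokes-based pressure decomposition, not a harmonic one.
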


\begin{theorem}
\label{th36d}
Let $\Omega$ be a domain in $\mathbb{R}^6$ with uniform $C^2$ boundary and $f \in L_{6}(\Omega)$. Let $(u,p)$ be a suitable weak solution of \eqref{ns6d} in $\Omega$ with the boundary condition \eqref{bd}. Then up to the boundary, the 2D Hausdorff measure of the set of singular points in $\Omega$  is equal to zero.
\end{theorem}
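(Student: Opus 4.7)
The plan is to combine the boundary $\varepsilon$-regularity criterion in Theorem \ref{mainthm6d} with the interior partial regularity already established in \cite{Dong_12}, and to conclude via a Vitali-type covering argument, mirroring the strategy of \cite{Seregin_04} for the 3D time-dependent problem. Split the singular set $\Sigma \subset \bar\Omega$ as $\Sigma = \Sigma_i \cup \Sigma_b$ with $\Sigma_i \subset \Omega$ and $\Sigma_b \subset \partial\Omega$. The bound $\cH^2(\Sigma_i)=0$ is precisely the main result of \cite{Dong_12}, so the new content is the boundary bound $\cH^2(\Sigma_b)=0$.

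To handle $\Sigma_b$, the key reduction is to extend Theorem \ref{mainthm6d} from the locally flat setting to general $C^2$ boundaries. For each $\hat x \in \partial\Omega$, I would fix a neighborhood $U$ of $\hat x$ and a $C^2$ diffeomorphism $\Phi:U\to \Phi(U)\subset \bR^6$ straightening $\partial\Omega\cap U$ to a portion of the hyperplane $\{y_6=0\}$, normalized so that $\Phi(\hat x)=0$ and $D\Phi(\hat x)=I$. Pushing forward $u$ via the Piola transform $v(y)=D\Phi(\Phi^{-1}(y))\,u(\Phi^{-1}(y))$ and setting $q=p\circ\Phi^{-1}$, the pair $(v,q)$ satisfies a stationary Navier--Stokes system whose coefficients differ from the model ones by $C^1$ perturbations vanishing at the origin, and the transformed local energy inequality picks up lower-order error terms of the same form. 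A careful re-run of the bootstrap underlying Theorem \ref{mainthm6d} should show that these perturbations enter with an additional positive power of $r$ at small scales and can therefore be absorbed, so the same smallness hypothesis $\limsup_{r\searrow 0} E(r)\le\epsilon_0$ yields H\"older continuity of $u$ at $\hat x$ in the original variables.

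Once boundary $\varepsilon$-regularity is in hand, the Hausdorff estimate is standard. Every $\hat x\in\Sigma_b$ must satisfy $\limsup_{r\searrow 0}r^{-2}\int_{\Omega(\hat x,r)}|\nabla u|^2\,dx>\epsilon_0$ (the scale-invariant quantity in the 6D stationary case). For each $\delta>0$ I would cover $\Sigma_b$ by balls $B(\hat x,r_{\hat x})$ with $r_{\hat x}<\delta$ and $\int_{\Omega(\hat x,r_{\hat x})}|\nabla u|^2\,dx\ge\epsilon_0 r_{\hat x}^2/2$, extract a countable disjoint subfamily $\{B(\hat x_j,r_j)\}$ whose $5$-fold enlargements still cover $\Sigma_b$, and sum to obtain
\begin{equation*}
\sum_j (5r_j)^2 \le 50\epsilon_0^{-1}\sum_j\int_{\Omega(\hat x_j,r_j)}|\nabla u|^2\,dx \le 50\epsilon_0^{-1}\int_{\Omega\cap N_\delta(\Sigma_b)}|\nabla u|^2\,dx,
\end{equation*}
where $N_\delta(\Sigma_b)$ denotes the $\delta$-neighborhood. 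Sending $\delta\to 0$, absolute continuity of the integral of $|\nabla u|^2 \in L_1(\Omega)$ forces the right-hand side to zero, so $\cH^2(\Sigma_b)=0$.

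The principal technical obstacle is the boundary-flattening step: one must verify that the transformed system admits a generalized local energy inequality compatible with the scale-invariant bootstrap behind Theorem \ref{mainthm6d}, and that each error term produced by $\Phi$ enters with enough smallness in $r$ so as not to spoil the iterated Morrey-type estimates on $E$. Because the 6D stationary problem is super-critical, one must keep careful track of the scaling exponents at every step of the iteration; this is the only place where the argument requires genuine new work beyond the combination of \cite{Seregin_04} and \cite{Dong_12}.
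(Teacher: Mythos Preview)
Your proposal is correct and follows essentially the same route as the paper: extend the flat-boundary $\varepsilon$-regularity criterion (Theorem \ref{mainthm6d}) to $C^2$ boundaries by a Seregin--Shilkin--Solonnikov flattening as in \cite{Seregin_04, Mikhailov_11}, then conclude $\cH^2(\Sigma_b)=0$ via the standard Vitali covering argument of \cite{CKN_82}. One small difference of emphasis: the paper singles out the perturbed linear Stokes estimates (the analogues of Lemmas \ref{lest_1} and \ref{lest_2}) as the crux of the flattening step, since these drive the pressure decomposition underlying Lemma \ref{D_est_lemma6}, whereas you stress the transformed energy inequality and the scaling of the error terms; both are needed, and both enter the same re-run of the bootstrap.
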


We give a unified approach for both cases.
Compared to \cite{Dong_12} and \cite{Dong_13}, the main obstacle in showing the partial regularity up to the boundary is that the control of the pressure associated with $u$ at the boundary is difficult. In the interior case, the pressure can be decomposed as
a sum of a harmonic function and a term which can be easily controlled in terms of $u$ by the Calder\'on--Zygmund estimate. However, such approach does not seem to be applicable near the boundary. This is because the boundary condition for $p$ is not prescribed, which makes it out of reach to estimate the harmonic function in the decomposition. Our proofs follow the scheme in \cite{Dong_12} and \cite{Dong_13} mentioned before, and use a decomposition of the pressure $p$ introduced by Seregin \cite{Seregin_02}. It turns out that, with Seregin's decomposition, we cannot get the initial weak decay estimate by using a simple manipulation of inequalities as in \cite{Dong_12} and \cite{Dong_13}. Roughly speaking, this is due to the fact that in some sense $p$ is corresponding to $|u|$ in the new decomposition, instead of $|u|^2$. Consequently, the estimate obtained for $p$ is not as sharp as in the interior case. To this end, we use a new iteration argument to establish an initial decay estimate. Although this argument looks more involved, it is in fact more flexible and also enables us to get rid of the use of an additional scale-invariant quantity defined in \cite{Dong_13}.

We remark that by using the same method we can get an alternative proof of Seregin or Kang's results without using any compactness argument. It remains an interesting open problem whether a similar result can be obtained for higher dimensional Navier--Stokes equations ($d\geq 7$ in the stationary case and $d\geq 5$ in the time-dependent case).  It seems to
us that four is the highest dimension for the time-dependent case and six is the highest dimension for the stationary case to which our approach (or any
existing approach) applies. In fact, for the time-dependent case,  by the embedding theorem
$$
L_{\infty}((0,T);L_2(\Omega))\cap L_2((0,T);W_2^1(\Omega))\hookrightarrow L_{2(d+2)/d}((0,T)\times \Omega),
$$
which implies nonlinear term in the energy inequality cannot
be controlled by the energy norm when $d \ge 5$.

This paper is divided into two parts: the 4D time-dependent case (Section \ref{4d}) and the 6D stationary case (Section \ref{6d}). We organize each part as follows: In Section \ref{s1}, we introduce the notation of certain scale-invariant quantities and some settings which are used throughout the Section \ref{4d}. In Section \ref{s2}, we prove our results in three steps. In the first step, we give some estimates of the scale-invariant quantities, which are by now standard and essentially follow the arguments in \cite{Dong_07, Lin_98}. In the second step, we establish a weak decay estimate of certain scale-invariant quantities by using an iteration argument based on the estimates we proved in the first step. In the last step, we successively improve the decay estimate by a bootstrap argument, and apply parabolic regularity to get a good estimate of $L_{3/2}$-mean oscillations of $u$, which yields the H\"{o}lder continuity of $u$ according to Campanato's characterization of H\"older continuous functions. Since the 6D stationary case is similar by using this approach, we treat it briefly in Section \ref{6d}.

\section{4D time-dependent case}\label{4d}
\subsection{Notation and Settings}\label{s1}

In this section, we introduce the notation which are used throughout Section \ref{4d}.  Let $\Omega$ be a domain in $\bR^4$, $-\infty\le S<T\le \infty$, and $m,n\in [1,\infty]$. We denote $L_{m,n}(\Omega\times (S,T))$ to be the usual space-time Lebesgue spaces of functions with the norm
\begin{align*}
\|f\|_{L_{m,n}(\Omega\times (S,T))}=\Big(\int_S^T\|f\|_{L_m(\Omega)}^n\,dt\Big)^{1/n}\quad\text{for}\quad n <+\infty,\\
\|f\|_{L_{m,n}(\Omega\times (S,T))}=\esssup_{t\in(S,T)}\|f\|_{L_m(\Omega)}\quad\text{for}\quad n = +\infty.
\end{align*}
We will also use the following Sobolev spaces:
\begin{align*}
W_{m,n}^{1,0}(\Omega\times (S,T))&=\Big\{f\,\Big|\, \|f\|_{L_{m,n}(\Omega\times (S,T))}+\|\nabla f\|_{L_{m,n}(\Omega\times (S,T))} < +\infty\Big\},\\
W_{m,n}^{2,1}(\Omega\times (S,T))&=\Big\{f\,\Big|\, \|f\|_{L_{m,n}(\Omega\times (S,T))}+\|\nabla f\|_{L_{m,n}(\Omega\times (S,T))}\\&\quad+\|\nabla^2 f\|_{L_{m,n}(\Omega\times (S,T))}+\|\partial_t f\|_{L_{m,n}(\Omega\times (S,T))} < +\infty\Big\}.
\end{align*}
For $p\in (1,\infty)$, we denote $\cH_p^1$ to be the solution spaces for divergence form parabolic equations. Precisely,
$$
\cH^1_p(\Omega\times (S,T) )=
\{u\,|\, u,Du \in L_p(\Omega\times (S,T) ),\,u_t \in \bH^{-1}_p(\Omega\times (S,T) \},
$$
where $\bH^{-1}_p(\Omega\times (S,T) )$ is the space consisting of all generalized functions $v$ satisfying
$$
\inf \big\{\|f\|_{L_p(\Omega\times (S,T)) }+\|g\|_{L_p(\Omega\times (S,T)) }\,|\,v=\nabla\cdot g+f\big\}<\infty.
$$

We shall use the following notation of balls, half balls, spheres, half spheres, parabolic cylinders, half parabolic cylinders:
\begin{align*}
&B({\hat x},r)=\{x\in\mathbb{R}^4\,|\,|x-{\hat x}|<r\},\quad B(r)=B(0,r),\quad B=B(1);\\
&B^{+}({\hat x},r)=\{x\in B({\hat x},r)\,|\,x=(x',x_4),\,x_4>\hat x_{4}\},\\&B^{+}(r)=B^{+}(0,r),\quad B^{+}=B^{+}(1);\\
&S^+({\hat x},r)=\{x\in \bR^4\,|\,|x-{\hat x}|=r,\,x=(x',x_4),\,x_4>\hat x_{4} \};\\
&Q({\hat z},r)=B({\hat x},r) \times ({\hat t}-r^2,{\hat t}),\quad Q(r)=Q(0,r),\quad Q=Q(1);\\
&Q^{+}({\hat x},r)=B^{+}({\hat x},r) \times ({\hat t}-r^2,{\hat t}),\quad Q^{+}(r)=Q^{+}(0,r),\quad Q^{+}=Q^{+}(1),
\end{align*}
where ${\hat z}=({\hat x},{\hat t})$ and $\hat x_{4}$ is the fourth coordinate of ${\hat x}$. We also define
$$
\Omega({\hat x},r)=B({\hat x},r)\cap \Omega,\quad \omega({\hat z},r)=Q({\hat z},r)\cap Q_T.
$$

We denote mean values of summable functions as follows:
\begin{align*}
[u]_{{\hat x},r}(t)&=\dfrac{1}{|\Omega(\hat x,r)|}\int_{\Omega({\hat x},r)}u(x,t)\,dx,\\
(u)_{{\hat z},r}&=\dfrac{1}{|\omega(\hat z,r)|}\int_{\omega({\hat z},r)}u\, dz,
\end{align*}
where $|A|$ as usual denotes the Lebesgue measure of the set $A$.


Now we introduce the following important quantities:
\begin{align*}
A(r)&=A(r,{\hat z})=\esssup_{{\hat t}-r^2\leq t\leq {\hat t}}\dfrac{1}{r^2}\int_{\Omega({\hat x},r)}|u(x,t)|^2\,dx,\\
E(r)&=E(r,{\hat z})=\dfrac{1}{r^2}\int_{\omega({\hat z},r)}|\nabla u|^2\,dz,\\
C(r)&=C(r,{\hat z})=\dfrac{1}{r^3}\int_{\omega({\hat z},r)}|u|^3\,dz,\\
D(r)&=D(r,{\hat z})=\dfrac{1}{r^3}\int_{\omega({\hat z},r)}|p-[p]_{{\hat x},r}|^{3/2}\,dz,\\
G(r)&=G(r,{\hat z})=r^4\Big[\int_{\omega({\hat z},r)}|f|^6\,dz\Big]^{1/3}.
\end{align*}
Notice that all these quantities are invariant under the natural scaling:
$$
u_\lambda(x,t)=\lambda u(\lambda x,\lambda^2 t), \,\,
p_\lambda(x,t)=\lambda^2 p(\lambda x,\lambda^2 t),\,\,
f_\lambda(x,t)=\lambda^3 f(\lambda x,\lambda^2 t).
$$
We shall estimate them in Section \ref{s2}. 


\subsection{The proof}\label{s2}
In the proofs below, we shall make use of the following well-known interpolation inequality.
\begin{lemma}
\label{interpolation}
For any functions $u \in W_2^1(\mathbb{R}^4_{+})$  and real numbers $q\in [2,4]$ and $r >0$,
\begin{align*}
\int_{B^{+}(r)}|u|^q \,dx &\leq N(q)\Big[\big(\int_{B^{+}(r)}|\nabla u|^2 \, dx\big)^{q-2}\big(\int_{B^{+}(r)}|u|^2 \, dx\big)^{2-q/2}\\
&\quad+r^{-2(q-2)}\big(\int_{B^{+}(r)}|u|^2\,dx\big)^{q/2}\Big].
\end{align*}
\end{lemma}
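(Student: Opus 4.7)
The inequality is a Gagliardo--Nirenberg-type interpolation on a half-ball, and the natural strategy is to first reduce to the scale $r=1$ by a dilation argument, then prove the statement on $B^+(1)$ using Sobolev embedding and $L^q$-interpolation. Finally one scales back to arbitrary $r$.

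\textbf{Step 1: Scaling.} Set $v(y)=u(ry)$ on $B^+(1)$. A direct change of variables gives $\int_{B^+(1)}|v|^q\,dy=r^{-4}\int_{B^+(r)}|u|^q\,dx$, and analogously $\int_{B^+(1)}|v|^2\,dy=r^{-4}\int_{B^+(r)}|u|^2\,dx$ and $\int_{B^+(1)}|\nabla v|^2\,dy=r^{-2}\int_{B^+(r)}|\nabla u|^2\,dx$. One checks that the powers of $r$ arising from each term on the right-hand side of the claimed inequality balance out (both the gradient term and the lower-order term produce the correct $r^{-4}$ after accounting for the $r^{-2(q-2)}$), so it suffices to prove the inequality for $r=1$.

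\textbf{Step 2: The case $r=1$.} For $q=2$ there is nothing to prove, and for $q=4$ the inequality is the critical Sobolev embedding $W^1_2(B^+)\hookrightarrow L_4(B^+)$ in dimension four, applied together with $\|u\|_{W^1_2}^4 \le N(\|\nabla u\|_{L_2}^4+\|u\|_{L_2}^4)$. For intermediate $q\in(2,4)$ I would interpolate: with $\lambda=2-4/q\in(0,1)$ determined by $1/q=(1-\lambda)/2+\lambda/4$, Hölder gives
\begin{equation*}
\|u\|_{L_q(B^+)}\le \|u\|_{L_2(B^+)}^{1-\lambda}\|u\|_{L_4(B^+)}^{\lambda}.
\end{equation*}
Combining with the Sobolev embedding $\|u\|_{L_4(B^+)}\le N(\|\nabla u\|_{L_2(B^+)}+\|u\|_{L_2(B^+)})$ and raising to the $q$-th power yields
\begin{equation*}
\|u\|_{L_q(B^+)}^q \le N\|u\|_{L_2(B^+)}^{4-q}\bigl(\|\nabla u\|_{L_2(B^+)}+\|u\|_{L_2(B^+)}\bigr)^{2q-4}.
\end{equation*}
An application of $(a+b)^{2q-4}\le 2^{2q-4}(a^{2q-4}+b^{2q-4})$ separates this into the two desired terms.

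\textbf{Step 3: Conclusion.} Substituting the $r=1$ inequality applied to $v$ back into the scaling relations of Step~1 and multiplying through by $r^4$ produces exactly the stated inequality on $B^+(r)$. There is no real obstacle here: the only mildly delicate point is the critical Sobolev embedding on the half-ball $B^+$, which is standard (it follows for instance from reflecting across $\{x_4=0\}$ and then applying the full-space Gagliardo--Nirenberg inequality in $\bR^4$). Hence the lemma holds with a constant $N$ depending only on $q$.
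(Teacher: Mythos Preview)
Your argument is correct; scaling plus $L^2$--$L^4$ interpolation and the Sobolev embedding on the half-ball is exactly the standard route to this Gagliardo--Nirenberg estimate. The paper itself gives no proof and simply cites the lemma as ``well-known,'' so there is no alternative approach to compare against --- you have supplied precisely the details the authors omitted.
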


Let $\cL:=\partial_t-\partial_{x_i}(a_{ij}\partial_{x_j})$ be a (possibly degenerate) divergence form parabolic operator with measurable coefficients which are bounded by a constant $K>0$. We shall use the following Poincar\'e type inequality for solutions to parabolic equations. See, for instance, \cite[Lemma 3.1]{Krylov_05}.
\begin{lemma}
                    \label{lem11.31}
Let ${\hat z}\in \bR^{d+1}$, $p\in (1,\infty)$, $r\in (0,\infty)$, $u\in \cH^1_{p}(Q^{+}({\hat z},r))$, $g=(g_1,\ldots,g_d),f\in L_{p}(Q^{+}({\hat z},r))$. Suppose that $u$ is a weak solution to $\cL u=\nabla\cdot g+f$ in $Q^{+}({\hat z},r)$. Then we have
$$
\int_{Q^{+}({\hat z},r)}|u(t,x)-(u)_{{\hat z},r}|^p\,dz\le Nr^p
\int_{Q^{+}({\hat z},r)}\big(|\nabla u|^p+|g|^p+r^p|f|^p\big)\,dz,
$$
where $N=N(d,K,p)$.
\end{lemma}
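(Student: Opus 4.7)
The plan is to first reduce to the unit case $\hat z = 0$, $r = 1$ by the natural parabolic rescaling $v(t,x) := u(\hat t + r^2 t, \hat x + rx)$, with the coefficients rescaled to $\tilde a_{ij}(t,x) := a_{ij}(\hat t + r^2 t, \hat x + rx)$ and the data rescaled to $\tilde g_i := r g_i(\hat t + r^2 t, \hat x + rx)$, $\tilde f := r^2 f(\hat t + r^2 t, \hat x + rx)$. A direct change of variables then shows that the inequality for $u$ on $Q^{+}(\hat z, r)$ is equivalent to its unit-size analogue on $Q^{+}(0,1)$, with the weights $r^p$ and $r^{2p}$ appearing exactly as stated.

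In the unit setting, fix once and for all a bump $\psi \in C_c^\infty(B^{+}(1))$ with $\int \psi\,dx = 1$ and introduce the smoothed time-slice average
\begin{equation*}
\tilde u(t) := \int_{B^{+}(1)} \psi(x)\, u(t, x)\,dx.
\end{equation*}
Decompose
\begin{equation*}
u(t, x) - (u)_{0,1} = \bigl(u(t, x) - \tilde u(t)\bigr) + \bigl(\tilde u(t) - \langle \tilde u \rangle\bigr) + \bigl(\langle \tilde u \rangle - (u)_{0,1}\bigr),
\end{equation*}
where $\langle \tilde u \rangle := \int_{-1}^{0} \tilde u(s)\,ds$, and estimate the three pieces separately.

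For the spatial piece, the Poincar\'e--Wirtinger inequality on the Lipschitz domain $B^{+}(1)$ gives, for a.e.\ $t$, $\int_{B^{+}(1)} |u(t,x) - [u]_{0,1}(t)|^p\,dx \le C\int_{B^{+}(1)} |\nabla u(t,x)|^p\,dx$, while the discrepancy $\tilde u(t) - [u]_{0,1}(t) = \int (\psi - 1/|B^{+}(1)|)(u - [u]_{0,1}(t))\,dx$ is bounded, via H\"older and the same Poincar\'e--Wirtinger inequality, by the same quantity; integrating over $t \in (-1, 0)$ yields the estimate for the first piece. For the temporal piece, since $\psi$ is an admissible test function,
\begin{equation*}
\tilde u'(t) = \langle \partial_t u(\cdot, t),\psi\rangle = -\int_{B^{+}(1)}(a_{ij}\partial_j u + g_i)\partial_i\psi\,dx + \int_{B^{+}(1)} f\,\psi\,dx
\end{equation*}
in the sense of distributions on $(-1, 0)$, and H\"older's inequality yields $|\tilde u'(t)|^p \le C\int_{B^{+}(1)}(|\nabla u|^p + |g|^p + |f|^p)\,dx$ with constant depending only on $p$, $d$, $K$, and $\psi$; the one-dimensional Poincar\'e inequality in $t$ then controls $\int_{-1}^{0} |\tilde u(t) - \langle\tilde u\rangle|^p\,dt$ by the same right-hand side integrated over $Q^{+}(0,1)$. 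Finally the error piece $|\langle\tilde u\rangle - (u)_{0,1}|^p \le \int_{-1}^{0} |\tilde u(s) - [u]_{0,1}(s)|^p\,ds$ is absorbed into the first-piece estimate. Summing the three contributions and undoing the rescaling produces the claimed inequality.

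The single real obstacle is that $|B^{+}(1)|^{-1}\mathbf{1}_{B^{+}(1)}$ is not an admissible test function for the weak formulation of $\cL u = \nabla\cdot g + f$, so the natural time-slice average $[u]_{0,1}(t)$ cannot be differentiated in $t$ directly via the PDE. Introducing the smoothed bump $\psi$ and then absorbing the resulting spatial discrepancy through Poincar\'e--Wirtinger on the half-ball is the technical device needed to circumvent this; the rest of the argument is routine once the decomposition above is in place.
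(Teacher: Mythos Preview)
Your argument is correct and is essentially the standard proof of this parabolic Poincar\'e inequality. The paper does not give its own proof of this lemma; it simply cites \cite[Lemma 3.1]{Krylov_05}, and your decomposition into a spatial oscillation (handled by Poincar\'e--Wirtinger on the half-ball), a temporal oscillation of a mollified slice average (handled by testing the equation against a smooth bump and applying the one-dimensional Poincar\'e inequality), and a residual constant is exactly the approach used in that reference.
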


Lastly, we recall the following two important lemmas which will be used to handle the estimates for the pressure $p$.
\begin{lemma}
\label{lest_1}
Let $\Omega \subset \bR^4$ be a bounded domain with smooth boundary and $T >0$ be a constant. Let $1< m <+\infty, 1< n <+\infty$ be two fixed numbers. Assume that $g \in L_{m,n}(Q_T)$. Then there exists a unique function pair $(v,p)$, which satisfies the following equations:
$$
\left\{\begin{aligned}
\partial_t v-\Delta v+\nabla p &= g \quad \text{in}\quad Q_T,\\
\nabla \cdot v&=0\quad\text{in}\quad Q_T,\\
[p]_{\Omega}(t)&=0\quad\text{for}\quad\text{a.e.}\,\, t\in [0,T],\\
v&=0 \quad\text{on}\quad \partial_p Q_T.
\end{aligned}
\right.
$$
Moreover, $v$ and $p$ satisfy the following estimate:
\begin{align*}
\|v\|_{W_{m,n}^{2,1}(Q_T)}+\|p\|_{W_{m,n}^{1,0}(Q_T)} \leq C\|g\|_{L_{m,n}(Q_T)},
\end{align*}
where the constant $C$ only depends on $m$, $n$, $T$, and $\Omega$.
\end{lemma}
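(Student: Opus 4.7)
The plan is to reduce the lemma to the maximal-regularity theory for the non-stationary Stokes system. Let $\mathcal{P}$ denote the Helmholtz projection onto divergence-free vector fields on $\Omega$ with vanishing normal trace, so that $g = \mathcal{P} g + \nabla \phi$, where $\phi$ is the unique (up to an additive constant) solution of the Neumann problem $\Delta \phi = \nabla \cdot g$ in $\Omega$, $\partial_\nu \phi = g \cdot \nu$ on $\partial \Omega$. Classical elliptic regularity yields $\|\nabla \phi\|_{L_m(\Omega)} \leq C \|g\|_{L_m(\Omega)}$ for a.e.\ $t$, hence $\|\mathcal{P} g\|_{L_{m,n}(Q_T)} \leq C \|g\|_{L_{m,n}(Q_T)}$. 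Applying $\mathcal{P}$ to the momentum equation reduces the problem to the abstract Cauchy problem
\begin{equation*}
v_t + A v = \mathcal{P} g \quad \text{on}\ (0,T), \qquad v(\cdot, 0) = 0,
\end{equation*}
where $A = -\mathcal{P} \Delta$ is the Stokes operator on the solenoidal subspace of $L_m(\Omega)$ with domain encoding the Dirichlet condition on $\partial\Omega$.

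Next I would invoke maximal $L_n$-regularity for $A$. On a smooth bounded domain, it is by now classical (Solonnikov's original paper handles the diagonal case $m=n$, while the off-diagonal case follows from the bounded $H^\infty$-calculus of $A$ combined with Weis's operator-valued Fourier multiplier theorem) that the Stokes operator enjoys maximal $L_n$-regularity on the solenoidal $L_m$-space for every $m, n \in (1, \infty)$. This provides
\begin{equation*}
\|v_t\|_{L_{m,n}(Q_T)} + \|A v\|_{L_{m,n}(Q_T)} \leq C \|\mathcal{P} g\|_{L_{m,n}(Q_T)} \leq C \|g\|_{L_{m,n}(Q_T)}.
\end{equation*}
Combining this with elliptic regularity for the Stokes resolvent problem, which bounds $\|\nabla^2 v\|_{L_m(\Omega)}$ by $\|A v\|_{L_m(\Omega)} + \|v\|_{L_m(\Omega)}$, gives $v \in W^{2,1}_{m,n}(Q_T)$ with the claimed estimate.

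For the pressure I would rewrite the momentum equation as $\nabla p = g + \Delta v - v_t$. Since $v_t$ lies in the range of $\mathcal{P}$, projecting onto gradients via $I - \mathcal{P}$ gives $\nabla p = (I - \mathcal{P}) g + (I - \mathcal{P}) \Delta v$. Both terms are controlled in $L_{m,n}(Q_T)$ by $\|g\|_{L_{m,n}(Q_T)}$ and $\|\nabla^2 v\|_{L_{m,n}(Q_T)}$ respectively, using that $I - \mathcal{P}$ is bounded on $L_m(\Omega)$. The normalization $[p]_\Omega(t) = 0$ combined with Poincar\'e's inequality on $\Omega$ upgrades the gradient bound to the full $W^{1,0}_{m,n}$ bound, and uniqueness is immediate from the linearity of the system together with the a priori estimate. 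The main obstacle is the maximal $L_n$-regularity statement in the mixed-norm case $m \neq n$, which is not elementary: it rests on the bounded $H^\infty$-calculus (equivalently, on the $\mathcal{R}$-boundedness of the resolvent family) of the Dirichlet Stokes operator on smooth bounded domains. Once those results from the Giga--Sohr--Hieber--Weis line of work are cited, everything else in the proof is a relatively routine packaging of elliptic regularity and the Helmholtz decomposition.
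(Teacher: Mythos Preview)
Your argument is correct, but note that the paper does not actually prove this lemma: it simply refers the reader to Maremonti--Solonnikov \cite{MaSo94} for a proof. So rather than comparing to an in-paper argument, the comparison is to that reference.

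Maremonti--Solonnikov proceed by direct potential-theoretic methods: they construct the solution via hydrodynamic potentials for the time-dependent Stokes system and obtain the anisotropic $L_{m,n}$ estimates from explicit kernel bounds and singular-integral theory. Your route is the more modern semigroup-theoretic one: reduce via the Helmholtz projection to an abstract Cauchy problem for the Dirichlet Stokes operator $A=-\mathcal P\Delta$, invoke maximal $L_n$-regularity on the solenoidal $L_m$ space (Giga--Sohr in the classical form, or the $H^\infty$-calculus/$\mathcal R$-boundedness framework of Weis and collaborators for the off-diagonal mixed-norm case), and then recover second derivatives and the pressure by elliptic Stokes regularity. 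Both approaches give the same conclusion; yours is cleaner to state but rests on rather heavy black boxes (bounded $H^\infty$-calculus of $A$), whereas the cited reference is more self-contained at the cost of lengthy explicit computations. Either citation would be acceptable here, and your sketch is sound.
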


\begin{lemma}
\label{lest_2}
Let $1< m \leq 2$, $1< n \leq 2$, and $m\leq s <+\infty$ be constants and $g\in L_{s,n}(Q^+)$.
Assume that the functions $v \in W^{1,0}_{m,n}(Q^+)$ and $p\in L_{m,n}(Q^+)$ satisfy the equations£º
$$
\left\{\begin{aligned}
\partial_t v-\Delta v+\nabla p &= g \quad\text{in}\quad Q^{+},\\
\nabla \cdot v&=0\quad\text{in}\quad Q^{+},
\end{aligned}
\right.
$$
and the boundary condition
\begin{equation*}
v=0 \quad\text{on}\quad \{y\, |\, y=(y',0),|y'|<1\} \times [-1,0).
\end{equation*}
Then, we have $v\in W^{2,1}_{s,n}(Q^+(1/2))$, $p\in W^{1,0}_{s,n}(Q^+(1/2))$, and
\begin{align*}
&\|v\|_{W_{s,n}^{2,1}(Q^{+}(1/2))}+\|p\|_{W_{s,n}^{1,0}(Q^{+}(1/2)}\\
&\, \leq C\big(\|g\|_{L_{s,n}(Q^{+})}
+\|v\|_{W_{m,n}^{1,0}(Q^{+})}+\|p\|_{L_{m,n}(Q^{+})}\big),
\end{align*}
where the constant $C$ only depends on $m$, $n$, and $s$.
\end{lemma}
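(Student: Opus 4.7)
The plan is to establish the local estimate by cutoff-plus-Bogovskii localization, which reduces the question to the whole-domain estimate of Lemma \ref{lest_1}, followed by a finite bootstrap in the integrability exponent via the parabolic Sobolev embedding in $\bR^4$.

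Fix radii $1/2\le r<R\le 1$ and a smooth cutoff $\eta(x,t)$ with $\eta\equiv 1$ on $Q^{+}(r)$ and $\operatorname{supp}\eta\subset Q^{+}(R)$. I would arrange $\eta$ to vanish near the curved part of $\partial B^{+}(R)$ and near the initial time $t=-R^{2}$, but permit $\eta$ to touch the flat face $\{x_4=0\}$ since $v$ itself vanishes there. To preserve divergence-freeness, for each $t$ I would introduce a Bogovskii corrector $w(\cdot,t)\in W_{0}^{1,q}(B^{+}(R))$ solving $\nabla\cdot w=v\cdot\nabla\eta$; the compatibility $\int v\cdot\nabla\eta\,dx=0$ follows from $\nabla\cdot v=0$ together with the vanishing of $\eta$ on the curved part of $\partial B^{+}(R)$ and of $v$ on the flat face, and the Bogovskii estimate gives $\|w\|_{L_{q,n}}+\|\nabla w\|_{L_{q,n}}\le C\|v\|_{L_{q,n}}$ for every $q\in(1,\infty)$. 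The pair $(\tilde v,\tilde p):=(\eta v-w,\,\eta p-c(t))$, with $c(t)$ the spatial mean of $\eta p$, is then divergence-free, vanishes on all of $\partial B^{+}(R)$, and satisfies
\begin{equation*}
\partial_t\tilde v-\Delta\tilde v+\nabla\tilde p=\eta g+v\,\partial_t\eta-v\,\Delta\eta-2\nabla v\cdot\nabla\eta+p\,\nabla\eta+\Delta w-\partial_t w
\end{equation*}
with zero initial data at $t=-R^{2}$.

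I would then apply Lemma \ref{lest_1} on a bounded $C^\infty$ domain $D\times(-R^{2},0)$, obtained by smoothing the corner $\partial B(R)\cap\{x_4=0\}$, initially at exponents $(m,n)$. Every term on the right-hand side above is controlled in $L_{m,n}(D\times(-R^{2},0))$ by $\|g\|_{L_{m,n}(Q^{+})}+\|v\|_{W^{1,0}_{m,n}(Q^{+})}+\|p\|_{L_{m,n}(Q^{+})}$ using the Bogovskii estimate. This gives a first-stage estimate
\begin{equation*}
\|v\|_{W^{2,1}_{m,n}(Q^{+}(r))}+\|p\|_{W^{1,0}_{m,n}(Q^{+}(r))}\le C\bigl(\|g\|_{L_{m,n}(Q^{+})}+\|v\|_{W^{1,0}_{m,n}(Q^{+})}+\|p\|_{L_{m,n}(Q^{+})}\bigr).
\end{equation*}
The parabolic Sobolev embedding in space dimension four then upgrades integrability: from $W^{2,1}_{q,n}$ one gains $1/q-1/q^\ast=1/2$ while $q<2$, and any exponent once $q\ge 2$. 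Iterating the cutoff/Bogovskii/Lemma \ref{lest_1} step on a finite nested sequence $1/2=r_{N}<\cdots<r_{0}=1$ with exponents $m=q_{0}<q_{1}<\cdots<q_{N}\ge s$ yields the target estimate on $Q^{+}(1/2)$ after finitely many steps.

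The main technical obstacle is the term $-\partial_t w$ on the right-hand side of the modified Stokes system. The Bogovskii corrector is defined only slicewise in time, so $\partial_t w$ has no a priori meaning, whereas Lemma \ref{lest_1} as stated requires a source in $L_{m,n}$. This is circumvented by using a standard strengthening of Lemma \ref{lest_1} (still within the solution class $\cH^{1}_{p}$ with time derivative in $\bH^{-1}_{p}$) that accommodates sources of the form $\partial_t W$ with $W\in L_{m,n}$: in the weak formulation one tests only against divergence-free fields, so that $\nabla\tilde p$ drops out and $\partial_t$ can be integrated by parts in time off $w$, leaving only $\|w\|_{L_{m,n}}\le C\|v\|_{L_{m,n}}$ to enter the estimate. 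A minor side point is the non-smoothness of $B^{+}(R)$ at the rim $\partial B(R)\cap\{x_4=0\}$; this is handled by arranging $\eta$ to vanish in a neighborhood of that rim, which is harmless because $Q^{+}(1/2)$ is separated from it.
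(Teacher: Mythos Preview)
The paper does not give a self-contained proof of this lemma; it cites Seregin \cite{Seregin_03,Seregin_09} and, in the subsequent remark, sketches a simplified version of Seregin's argument. That argument is quite different from yours: one mollifies $v$, $p$, $g$ in the \emph{tangential} variables $(t,x')$ only, so that the mollified functions still satisfy the Stokes system and the flat boundary condition. Tangential smoothness gives $D D_{x'}v^{\varepsilon},\,\partial_t v^{\varepsilon},\,D_{x'}p^{\varepsilon}\in L_{m,n}$ for free; the equations for the first $3$ components then yield $D_{x_4 x_4}v_j^{\varepsilon}\in L_{m,n}$, the divergence-free condition gives $D_{x_4 x_4}v_4^{\varepsilon}$, and the fourth equation gives $D_{x_4}p^{\varepsilon}$. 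At this point $(v^{\varepsilon},p^{\varepsilon})$ has the stronger regularity $W^{2,1}_{m,n}\times W^{1,0}_{m,n}$ assumed in \cite[Prop.~2]{Seregin_03}, whose estimate one applies and then passes to the limit $\varepsilon\to 0$. No Bogovskii corrector and no time-derivative of an auxiliary field ever appears.

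Your cutoff-plus-Bogovskii route is a natural alternative, but as written it has a genuine gap exactly where you flag it, in the term $-\partial_t w$. The fix you propose---invoke a ``strengthening of Lemma~\ref{lest_1}'' in the $\cH^1_p$ solution class and integrate $\partial_t$ off $w$ in the weak formulation---does not deliver what you need. Testing against divergence-free fields and integrating by parts in time yields at best an energy/$\cH^1_p$-type bound, i.e.\ control of $\nabla\tilde v$ in $L_{m,n}$; it does \emph{not} give the strong $W^{2,1}_{m,n}\times W^{1,0}_{m,n}$ estimate you write down as your ``first-stage estimate'', and without that you have nothing to feed into the Sobolev bootstrap. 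To make your scheme close you would need an honest $L_{m,n}$ bound on $\partial_t w$, which requires either the negative-order mapping properties of the Bogovski\u\i\ operator (so that $\partial_t w=B\bigl(\partial_t(v\cdot\nabla\eta)\bigr)$ lands in $L_{m,n}$ once you use the equation to place $\partial_t v$ in a $W^{-1}$-type space) or a separate argument to first upgrade $\partial_t v$ itself. Neither is what you wrote. The tangential-mollification approach in the paper's remark sidesteps this difficulty entirely and is the cleaner way to the estimate.
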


We refer the reader to \cite{MaSo94} for the proof of Lemma \ref{lest_1}, and \cite{Seregin_03, Seregin_09} for the proof of Lemma \ref{lest_2}.

\begin{remark}
In \cite{Seregin_03}, Lemma \ref{lest_2} was proved under the stronger conditions that $v\in W^{2,1}_{m,n}(Q^+)$ and $p\in W^{1,0}_{m,n}(Q^+)$. See Proposition 2 there. These conditions were relaxed in \cite{Seregin_09} by mollifying the functions with respect to $x'$ and passing to the limit. In fact, the proof can be simplified if the mollification is taken with respect to both $x'$ and $t$. Indeed, let $v^{\varepsilon}$, $p^{\varepsilon}$, and $g^{\varepsilon}$ the standard mollification with respect to $(t,x')$, which satisfy the same equations as $v$, $p$, and $g$. It is clear that for sufficiently small $\varepsilon$
$$
DD_{x'} v^{\varepsilon},\,\, \partial_t v^{\varepsilon},\,\, D_{x'}p^{\varepsilon}\in L_{m,n}(Q^+(3/4)).
$$
Then from the equations for $v_1^{\varepsilon},\ldots,v_3^{\varepsilon}$, we get $D_{x_4x_4}v_j^{\varepsilon}\in L_{m,n}(Q^+(3/4))$ for $j=1,2,3$.
Owing to $\nabla\cdot v^{\varepsilon}=0$,  $D_{x_4x_4}v_4^{\varepsilon}\in L_{m,n}(Q^+(3/4))$, which together with the equation for $v_4^{\varepsilon}$ further implies $D_{x_4}p^{\varepsilon}\in L_{m,n}(Q^+(3/4))$. By Proposition 2 of \cite{Seregin_03}, we have
$v^{\varepsilon}\in W^{2,1}_{s,n}(Q^+(1/2))$, $p^{\varepsilon}\in W^{1,0}_{s,n}(Q^+(1/2))$, and
\begin{align*}
&\|v^{\varepsilon}\|_{W_{s,n}^{2,1}
(Q^{+}(1/2))}+\|p^{\varepsilon}\|_{W_{s,n}^{1,0}
(Q^{+}(1/2))}\\
&\, \leq C\big(\|g^{\varepsilon}\|_{L_{s,n}(Q^{+}(3/4))}
+\|v^{\varepsilon}\|_{W_{m,n}^{1,0}(Q^{+}(3/4))}
+\|p^{\varepsilon}\|_{L_{m,n}(Q^{+}(3/4))}\big),
\end{align*}
where $C$ is independent of $\varepsilon$. Taking the limit as $\varepsilon\to 0$, the conclusion of Lemma \ref{lest_2} follows.
\end{remark}

Now we prove the main theorems in three steps.
\subsubsection{Step 1.}
First, we control the quantities $A$, $C$, and $D$ in a smaller ball by their values in a larger ball under the assumption that $E$ is sufficiently small. Here we follow the argument in \cite{Dong_07}, which in turn used some ideas in \cite{Lady_99, Lin_98, Seregin_02}.

\begin{lemma}
                                    \label{lem2.5}
Let $\gamma \in (0,1)$ and $\rho >0$ be constants. Suppose that ${\hat x}\in \partial\Omega$ and $\omega({\hat z},\rho)=Q^{+}({\hat z},\rho)$ so that $\partial\Omega$ is locally flat near ${\hat x}$. Then we have
\begin{equation}
\label{C_est}
C(\gamma \rho) \leq N[\gamma^{-3}A^{1/2}(\rho)E(\rho)+\gamma^{-9/2}A^{3/4}(\rho)E^{3/4}(\rho)+\gamma C(\rho)],
\end{equation}
where $N$ is a constant independent of $\gamma$, $\rho$, and ${\hat z}$.
\end{lemma}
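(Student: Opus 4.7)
The plan is a classical Caffarelli--Kohn--Nirenberg style splitting combined with a careful interpolation between the pointwise-in-time $L^2$ bound and the integrated Poincar\'e bound. Let $\bar{u}(t) := [u]_{{\hat x},\rho}(t)$ denote the spatial average of $u$ on $B^+({\hat x},\rho)$ at time $t$, and write $|u|^3 \leq C(|u-\bar u|^3 + |\bar u|^3)$ on $\omega({\hat z},\gamma\rho) = Q^+({\hat z},\gamma\rho)$. The constant-in-space piece is immediate: at each $t$, Jensen's inequality gives $|\bar u(t)|^3 \leq C\rho^{-4}\int_{B^+({\hat x},\rho)}|u|^3\,dx$, hence $\int_{B^+({\hat x},\gamma\rho)}|\bar u|^3\,dx \leq C\gamma^4\int_{B^+({\hat x},\rho)}|u|^3\,dx$. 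Integrating in $t$ over $({\hat t}-(\gamma\rho)^2,{\hat t}) \subset ({\hat t}-\rho^2,{\hat t})$ and dividing by $(\gamma\rho)^3$ produces the desired $N\gamma\, C(\rho)$ contribution.

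For the mean-zero piece $v := u - \bar u$, I would apply the interpolation inequality in Lemma \ref{interpolation} with $q=3$ on $B^+({\hat x},\gamma\rho)$:
\begin{equation*}
\int_{B^+({\hat x},\gamma\rho)}|v|^3\,dx \leq N\Bigl(\int_{B^+({\hat x},\gamma\rho)}|\nabla u|^2\Bigr)\Bigl(\int_{B^+({\hat x},\gamma\rho)}|v|^2\Bigr)^{1/2} + N(\gamma\rho)^{-2}\Bigl(\int_{B^+({\hat x},\gamma\rho)}|v|^2\Bigr)^{3/2},
\end{equation*}
and then enlarge the spatial domains to $B^+({\hat x},\rho)$. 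Here two independent bounds for $\int|v|^2$ are available: the trivial one $\int|v|^2 \leq \int|u|^2 \leq \rho^2 A(\rho)$ (valid for each $t$), and the Poincar\'e bound $\int_{B^+({\hat x},\rho)}|v|^2 \leq C\rho^2\int_{B^+({\hat x},\rho)}|\nabla u|^2$ (valid because $v$ has vanishing spatial mean on $B^+({\hat x},\rho)$).

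For the first summand I use only the $A$-bound on $(\int|v|^2)^{1/2}$ and integrate in time against $\int_{Q^+({\hat z},\rho)}|\nabla u|^2 = \rho^2 E(\rho)$; dividing by $(\gamma\rho)^3$ this produces the $N\gamma^{-3}A^{1/2}(\rho)E(\rho)$ term. For the second summand I take the geometric mean of the two bounds to obtain $\int_{B^+({\hat x},\rho)}|v|^2 \leq C\rho^2 A^{1/2}(\rho)(\int|\nabla u|^2)^{1/2}$, so $(\int|v|^2)^{3/2} \leq C\rho^3 A^{3/4}(\rho)(\int|\nabla u|^2)^{3/4}$, and then apply H\"older in the time variable with exponents $4$ and $4/3$ to the remaining factor $\int_{{\hat t}-(\gamma\rho)^2}^{\hat t}(\int|\nabla u|^2)^{3/4}\,dt \leq (\gamma\rho)^{1/2}(\rho^2 E(\rho))^{3/4}$. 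Collecting the powers of $\gamma\rho$ and dividing by $(\gamma\rho)^3$ yields exactly $N\gamma^{-9/2}A^{3/4}(\rho)E^{3/4}(\rho)$.

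The delicate step is the treatment of the second summand of the interpolation inequality: using only the $A$-bound would give $\gamma^{-1}A^{3/2}(\rho)$, while using only the Poincar\'e bound would lead to $\int(\int|\nabla u|^2)^{3/2}dt$ which cannot be controlled by $E(\rho)$ alone without introducing an essential supremum in time that we do not have at our disposal. The geometric-mean interpolation of the two $L^2$-bounds, followed by H\"older in the time variable, is precisely what produces the $A^{3/4}E^{3/4}$ exponents together with the correct power $\gamma^{-9/2}$; everything else in the proof is routine scaling and volume bookkeeping.
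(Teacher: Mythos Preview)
Your argument is correct and is essentially the standard Caffarelli--Kohn--Nirenberg/Lin splitting that the paper invokes by reference to \cite[Lemma~2.8]{Dong_07}: subtract the spatial mean on the large half-ball, handle the constant part by Jensen to produce the $\gamma\,C(\rho)$ term, and treat the oscillation part via the $q=3$ interpolation inequality combined with the Poincar\'e--Wirtinger and trivial $L^2$ bounds. The geometric-mean step you single out, together with the H\"older-in-time estimate, is exactly how the $\gamma^{-9/2}A^{3/4}E^{3/4}$ term arises in that argument; the paper omits these details and simply notes that the interior proof carries over to half-cylinders with the zero boundary condition.
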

\begin{proof}
This is Lemma 2.8 of \cite{Dong_07} with the only difference that balls (or cylinders) are replaced by half balls (or half cylinders, respectively). By using the zero boundary condition, the proof remains the same with obvious modifications. We omit the details.
\end{proof}

\begin{lemma}
\label{D_est_lemma}
Let $ \gamma \in (0,1/4]$ and $\rho >0$ be constants. Suppose that ${\hat x}\in \partial\Omega$ and $\omega({\hat z},\rho)=Q^{+}({\hat z},\rho)$. Then we have
\begin{align}
\label{D_est}
\nonumber D(\gamma \rho) &\leq N\big[\gamma^{-3}A^{1/2}(\rho)E(\rho)
+\gamma^{9/4}(D(\rho)\\
&\quad+A^{3/4}(\rho)+E^{3/4}(\rho))+\gamma^{-3}G^{3/4}(\rho)\big],
\end{align}
where $N$ is a constant independent of $\gamma$, $\rho$, and ${\hat z}$.
\end{lemma}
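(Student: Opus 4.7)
Following Seregin's approach, my plan is to decompose the pressure locally near the flat boundary and treat the resulting pieces separately. Using the scale-invariance of $A$, $E$, $G$, $D$ under the parabolic rescaling and the hypothesis $\omega({\hat z}, \rho) = Q^+({\hat z}, \rho)$, I write $p = p_1 + p_2$, where $(v, p_1)$ is the solution (provided by Lemma \ref{lest_1}) to the non-homogeneous Stokes system
\begin{equation*}
\partial_t v - \Delta v + \nabla p_1 = -u \cdot \nabla u + f, \quad \nabla \cdot v = 0 \quad \text{in } Q^+({\hat z}, \rho),
\end{equation*}
with $v = 0$ on $\partial_p Q^+({\hat z}, \rho)$ and $[p_1]_{{\hat x}, \rho}(t) = 0$. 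Setting $(w, p_2) := (u - v, p - p_1)$, the pair then satisfies the homogeneous Stokes system in $Q^+({\hat z}, \rho)$ with $w = 0$ on the flat portion of the lateral boundary.

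For the $p_1$ contribution, Lemma \ref{lest_1} with $m = n = 3/2$, combined with H\"older's inequality and Lemma \ref{interpolation} applied to $|u \cdot \nabla u|^{3/2}$, should yield
\begin{equation*}
(\gamma \rho)^{-3} \int_{Q^+({\hat z}, \gamma \rho)} |p_1 - [p_1]_{{\hat x}, \gamma \rho}|^{3/2}\, dz \leq C \gamma^{-3} \bigl(A^{1/2}(\rho) E(\rho) + G^{3/4}(\rho)\bigr),
\end{equation*}
which delivers the first and third terms of the target estimate. The $\gamma^{-3}$ factor simply reflects the enlargement of the domain $Q^+(\gamma \rho) \subset Q^+(\rho)$ without any decay coming from $p_1$ itself.

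For the $p_2$ contribution, I apply Lemma \ref{lest_2} to $(w, p_2 - [p_2]_{{\hat x}, \rho}(t))$ with $m = n = 3/2$ and a sufficiently large integrability exponent $s$ (concretely $s \geq 24$) to obtain
\begin{equation*}
\|\nabla p_2\|_{L_{s, 3/2}(Q^+({\hat z}, \rho/2))} \leq C \bigl(\|w\|_{W_{3/2, 3/2}^{1,0}(Q^+({\hat z}, \rho))} + \|p_2 - [p_2]_{{\hat x}, \rho}\|_{L_{3/2, 3/2}(Q^+({\hat z}, \rho))}\bigr).
\end{equation*}
Bounding $\|w\|_{W_{3/2, 3/2}^{1,0}} \leq \|u\|_{W_{3/2, 3/2}^{1,0}} + \|v\|_{W_{3/2, 3/2}^{1,0}}$ by H\"older's inequality for $u$ (which introduces the $A^{3/4}$ and $E^{3/4}$ factors) and Lemma \ref{lest_1} for $v$ (which recycles the bound $A^{1/2} E + G^{3/4}$), together with $\|p_2 - [p_2]_{{\hat x}, \rho}\|_{L_{3/2, 3/2}} \leq \|p - [p]_{{\hat x}, \rho}\|_{L_{3/2, 3/2}} + \|p_1\|_{L_{3/2, 3/2}}$ (which contributes $D(\rho)$ together with the $p_1$ bound), and then applying the Poincar\'e inequality in $B^+({\hat x}, \gamma \rho)$ at each time followed by H\"older's inequality in space with exponent $s$, one obtains a $p_2$-contribution of order $C \gamma^{5/2 - 6/s}\bigl(D(\rho) + A^{3/4} + E^{3/4} + A^{1/2} E + G^{3/4}\bigr)$. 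Choosing $s = 24$ gives the precise decay $\gamma^{5/2 - 6/s} = \gamma^{9/4}$; summing the $p_1$ and $p_2$ contributions and absorbing the excess $A^{1/2} E + G^{3/4}$ into the $\gamma^{-3}$ group (using $\gamma^{9/4} \leq \gamma^{-3}$ for $\gamma < 1$), the stated inequality follows.

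The principal difficulty is that, unlike in the interior case where $p$ admits a harmonic-plus-$L^p$ decomposition via the Calder\'on--Zygmund estimate, the absence of a prescribed boundary condition for $p$ forces reliance on Stokes-type regularity. Extracting the precise decay exponent $\gamma^{9/4}$ hinges on choosing $s$ large enough in Lemma \ref{lest_2} and simultaneously tracking the mixed $W_{3/2, 3/2}^{1,0}$ norms of both $u$ and the auxiliary velocity $v$; these are exactly the terms that produce the combination $A^{3/4} + E^{3/4}$ appearing alongside $D(\rho)$ in the decay group, rather than the cleaner harmonic-decay estimate one would expect from the interior analysis.
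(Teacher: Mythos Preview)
Your overall strategy---Seregin's decomposition $p=p_1+p_2$ via an auxiliary Stokes problem, followed by Lemma~\ref{lest_2} with large $s$ to extract the $\gamma^{9/4}$ decay from $p_2$---is exactly the paper's approach, and your computation of the exponent $5/2-6/s=9/4$ at $s=24$ is correct. However, there is a genuine gap in the $p_1$ step: the choice $m=n=3/2$ in Lemma~\ref{lest_1} cannot deliver the bound you claim. In four space dimensions the energy class $u\in L_{\infty,t}L_{2,x}\cap L_{2,t}W^1_{2,x}$ does \emph{not} place $u\cdot\nabla u$ in $L_{3/2,3/2}$: any H\"older splitting of $\int_{B^+}|u|^{3/2}|\nabla u|^{3/2}\,dx$ that keeps $\nabla u$ in $L_2$ forces $u$ into $L_6$ in space, which is beyond the critical Sobolev exponent $4$ permitted by Lemma~\ref{interpolation}. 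Thus the inequality $\|u\cdot\nabla u\|_{L_{3/2,3/2}}^{3/2}\le N A^{1/2}E$ fails in general, and the first term of your target estimate is not obtained.

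The paper resolves this by taking $m=12/11$, $n=3/2$ in Lemma~\ref{lest_1}. This exponent is calibrated so that H\"older's inequality pairs $|\nabla u|^{12/11}$ (with conjugate landing in $L_2$) against $|u|^{12/11}$ (landing in $L_{12/5}$), and $q=12/5$ lies in the admissible range $[2,4]$ of Lemma~\ref{interpolation}; the outcome is $\big(\int_{B^+}|u\cdot\nabla u|^{12/11}\big)^{11/8}\le N(\int|\nabla u|^2)(\int|u|^2)^{1/2}$, whose time integral gives exactly $A^{1/2}E$. The Sobolev--Poincar\'e inequality $W^{1,12/11}\hookrightarrow L^{3/2}$ (critical in 4D) then converts $\nabla p_1\in L_{12/11,3/2}$ into the needed $L_{3/2}$ oscillation of $p$. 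The same exponent $m=12/11$ is then used in Lemma~\ref{lest_2}. A secondary technical point: Lemma~\ref{lest_1} requires a smooth spatial domain, so you cannot apply it directly on $Q^+({\hat z},\rho)$, whose spatial cross-section $B^+$ has a corner along the equator; the paper inserts a smooth domain $\tilde B$ with $B^+(1/2)\subset\tilde B\subset B^+$ for this reason.
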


\begin{proof}
Without loss of generality, by shifting the coordinates we may assume that ${\hat z}=(0,0)$. By the scale-invariant property, we may also assume $\rho=1$.
We choose and fix a domain $\tilde{B} \subset \mathbb{R}^{4}$ with smooth boundary so that
\begin{equation*}
B^{+}(1/2) \subset \tilde{B} \subset B^{+},
\end{equation*}
and denote $\tilde{Q}=\tilde{B}\times(-1,0)$.
Define $\tilde{f}=- u\cdot\nabla u$. By using H\"{o}lder's inequality, Lemma \ref{interpolation} with $q=12/5$ and the boundary Poincar\'e inequality, we get
\begin{align}
    \label{eq5.37}
\nonumber&\Big(\int_{B^{+}}|\tilde f|^{12/11}\,dx\Big)^{11/8}\\\nonumber &\,\leq\Big(\int_{B^{+}}|\nabla u|^2\,dx\Big)^{3/4}\Big(\int_{B^{+}}|u|^{12/5}\,dx\Big)^{5/8}\\
\nonumber &\,\leq N\Big(\int_{B^{+}}|\nabla u|^2\,dx\Big)^{3/4}\Big(\int_{B^{+}}|\nabla u|^{2}\,dx\Big)^{1/4}\Big(\int_{B^{+}}|u|^{2}\,dx\Big)^{1/2}\\
&\,\leq N\Big(\int_{B^{+}}|\nabla u|^2\,dx\Big)\Big(\int_{B^{+}}|u|^2\,dx\Big)^{1/2},
\end{align}
and
\begin{align}
\label{v1p1_3}
\|f\|_{L_{\frac{12}{11},\frac 3 2}(\tilde{Q})}\leq N\Big(\int_{\tilde{Q}}|f|^6\,dz\Big)^{1/6}.
\end{align}
By Lemma \ref{lest_1}, there is a unique solution
$$
v\in W^{2,1}_{12/11,3/2}(\tilde{Q})\quad \text{and}\quad
p_1\in W^{1,0}_{12/11,3/2}(\tilde{Q})
$$
to the following initial boundary value problem:
$$
\left\{\begin{aligned}
\partial_t v-\Delta v+\nabla p_1 &= \tilde{f}+f \quad\text{in}\quad \tilde{Q},\\
\nabla \cdot v&=0\quad\text{in}\quad \tilde{Q},\\
[p_1]_{\tilde{B}}(t)&=0\quad \text{for}\quad \text{a.e.}\,\, t\in(-1,0),\\
v&=0 \quad\text{on}\quad \partial_p  \tilde{Q}.
\end{aligned}
\right.
$$
Moreover, we have
\begin{align}
\label{v1p1}
\nonumber &\|v\|_{L_{\frac{12}{11},\frac 3 2}(\tilde{Q})}+\|\nabla v\|_{L_{\frac{12}{11},\frac 3 2}(\tilde{Q})}+\|p_1\|_{L_{\frac{12}{11},\frac 3 2}(\tilde{Q})}+\|\nabla p_1\|_{L_{\frac{12}{11},\frac 3 2}(\tilde{Q})}\\ \nonumber
&\,\leq N\|\tilde{f}\|_{L_{\frac {12}{11},\frac 3 2}(\tilde{Q})}+
N\|f\|_{L_{\frac{12}{11},\frac 3 2}(\tilde{Q})}\\
&\,\le N\Big(\int_{-1}^{0}
\big(\int_{B^{+}}|\nabla u|^2\,dx\big)\big(\int_{B^{+}}|u|^2\,dx\big)^{1/2}\,dt
\Big)^{2/3}+N\Big(\int_{\tilde{Q}}
|f|^6\,dz\Big)^{1/6},
\end{align}
where in the last inequality we used \eqref{eq5.37} and \eqref{v1p1_3}.

We set $w=u-v$ and $p_2=p-p_1-[p]_{0,1/2}$. Then $w$ and $p_2$ satisfy
$$
\left\{\begin{aligned}
\partial_t w-\Delta w +\nabla p_2 &=0\quad\text{in}\quad \tilde{Q},\\
\nabla \cdot w&=0\quad\text{in}\quad \tilde{Q},\\
w&=0 \quad\text{on}\quad \big\{\partial \tilde{B} \cap \partial\Omega\big\}\times[-1,0).
\end{aligned}
\right.
$$
By Lemma \ref{lest_2} together with a scaling and the triangle inequality, we have $p_2\in W^{1,0}_{24,3/2}(Q^{+}(1/4))$ and
\begin{align}
\nonumber&\|\nabla p_2\|_{L_{24,\frac 3 2}(Q^{+}(1/4))}\\
\nonumber &\,\leq N\Big[\|w\|_{L_{\frac {12} {11},\frac 3 2}(Q^{+}(1/2))}+\|\nabla w\|_{L_{\frac {12} {11},\frac 3 2}(Q^{+}(1/2))}+
\|p_2\|_{L_{\frac {12} {11},\frac 3 2}(Q^{+}(1/2))}\Big]\\
\nonumber&\,\leq N\Big[\|u\|_{L_{\frac {12}{11},\frac 3 2}(Q^{+}(1/2))}+\|\nabla u\|_{L_{\frac {12}{11},\frac 3 2}(Q^{+}(1/2))}\\
\nonumber&\,\quad+\|p-[p]_{0,1/2}\|_{L_{\frac {12}{11},\frac 3 2}(Q^{+}(1/2))}
+\|v\|_{L_{\frac {12}{11},\frac 3 2}(Q^{+}(1/2))}
\\&\,\quad+\|\nabla v\|_{L_{\frac {12}{11},\frac 3 2}(Q^{+}(1/2))}+
\|p_1\|_{L_{\frac {12}{11},\frac 3 2}(Q^{+}(1/2))}\Big].
\label{p2}
\end{align}
Here the constant $s=24$ is non-essential and can be replaced by any sufficiently larger number.
Then with \eqref{v1p1} and H\"older's inequality, we obtain
\begin{align}
                            \label{eq5.43}
&\nonumber \|\nabla p_2\|_{{L_{24,\frac 3 2}(Q^{+}(1/4))}}\\
\nonumber&\,\leq  N\Big[\|u\|_{L_{\frac {12} {11},\frac 3 2}(Q^{+}(1/2))}+\|\nabla u\|_{L_{\frac {12} {11},\frac 3 2}(Q^{+}(1/ 2))}
+\|p-[p]_{0,1/2}\|_{L_{\frac 3 2,\frac 3 2}(Q^{+}(1/ 2))}\\
&\,\quad+\big(\int_{-1}^{0}
(\int_{B^{+}}|\nabla u|^2\,dx)(\int_{B^{+}}|u|^2\,dx)^{1/2}\,dt
\big)^{2/3}+(\int_{\tilde{Q}}
|f|^6\,dz)^{1/6}\Big].
\end{align}
Recall that $0 < \gamma \leq 1 /4$. Then by using the Sobolev--Poincar\'e inequality, the triangle inequality, \eqref{v1p1}, \eqref{eq5.43}, 
and H\"{o}lder's inequality, we bound $D(\gamma)$ by
\begin{align*}
&\dfrac{N}{\gamma^3}\int_{-\gamma^2}^{0}\big(\int_{B^{+}(\gamma)} |\nabla p_1|^{\frac {12} {11}}\,dx\big)^{\frac {11} 8}+\big(\int_{B^{+}(\gamma)} |\nabla p_2|^{\frac {12} {11}}\,dx\big)^{\frac {11} 8}\,dt\\
\nonumber&\,\leq N\big[\gamma^{-3}E(1)A^{\frac 1 2}(1)+\gamma^{-3}G^{\frac 3 4}(1) \big] + N\gamma^{\frac 9 4}\int_{-\gamma^2}^{0}\big(\int_{B^{+}(\gamma)} |\nabla p_2|^{24}\,dx\big)^{\frac 1 {16}}\,dt\\\nonumber&\,\leq N\big[\gamma^{-3}E(1)A^{\frac 1 2}(1)+\gamma^{-3}G^{\frac 3 4}(1)\big]+ N\gamma^{\frac 9 4}[E(1)A^{\frac 1 2}(1)+G^{\frac 3 4}(1)\\
\nonumber &\quad+D(1)+A^{\frac 3 4}(1)+E^{\frac 3 4}(1)]
\\
\nonumber&\,\leq N\Big[\gamma^{-3}E(1)A^{\frac 1 2}(1)+\gamma^{\frac 9 4}\big(D(1)+A^{\frac 3 4}(1)+E^{\frac 3 4}(1)\big)
+\gamma^{-3}G^{\frac 3 4}(1)\Big].
\end{align*}
The lemma is proved.
\end{proof}

The proofs of Lemma \ref{aelemma} and Proposition \ref{prop1} as well as Lemma \ref{eeeeee} in Section \ref{sec2.2.2} follow closely the argument in \cite{Dong_07}. We include them in Appendix for completeness.

\begin{lemma}
                \label{aelemma}
Let $\gamma \in (0,1/2]$ and $\rho >0$ be constants. Suppose that ${\hat x}\in \partial\Omega$ and $\omega({\hat z},\rho)=Q^{+}({\hat z},\rho)$. Then we have
\begin{equation*}
A(\gamma \rho)+E(\gamma \rho) \leq N \gamma^{-2}\big[C^{2/3}(\rho)+C(\rho)+C^{1/3}(\rho)D^{2/3}(\rho)+G(\rho)\big].
\end{equation*}
In particular, when $\gamma=1/2$ we have
\begin{equation}
\label{AE_est_2}
A(\rho/2)+E(\rho/2) \leq N \big[C^{2/3}(\rho)+C(\rho)+C^{1/3}(\rho)D^{2/3}(\rho)+G(\rho)\big].
\end{equation}
\end{lemma}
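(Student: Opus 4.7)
The plan is to apply the generalized local energy inequality \eqref{energy} with a carefully chosen cutoff, scale out $\rho$, and then estimate each term on the right-hand side via H\"older's inequality in terms of $C$, $D$, and $G$.

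By the scale-invariance of all quantities, we may reduce to the case $\rho=1$; by shifting, assume ${\hat z}=0$. Choose a smooth cutoff $\psi\in C^{\infty}(\bar\Omega\times(-\infty,0])$ supported in $Q^{+}(2\gamma)$ with $\psi\equiv 1$ on $Q^{+}(\gamma)$, $0\le\psi\le 1$, and satisfying
$$
|\nabla \psi|\le N\gamma^{-1},\quad |\psi_t|+|\Delta \psi|\le N\gamma^{-2}.
$$
Plugging $\psi$ into \eqref{energy} (using $\omega(0,1)=Q^{+}(1)$ so that the cutoff is compatible with the boundary condition $u=0$ on $\partial\Omega$), the left-hand side dominates $(\gamma)^{2}\bigl(A(\gamma)+E(\gamma)\bigr)$ up to a constant.

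On the right-hand side there are three families of terms. First, the $|u|^2(\psi_t+\Delta\psi)$ term is bounded by $N\gamma^{-2}\int_{Q^{+}(2\gamma)}|u|^2\,dz$, which by H\"older's inequality in space-time is at most $N\gamma^{-2}\gamma^{6\cdot(1/3)}C^{2/3}(2\gamma)\lesssim C^{2/3}(1)$ after accounting for the $(\gamma\rho)^{-2}$ normalization of $A+E$. Second, for the convective term $(|u|^2+2p)u\cdot\nabla\psi$, crucially we may replace $p$ by $p-[p]_{0,1}$, because $\int u\cdot\nabla\psi\,dx=0$ at each time slice (by $\nabla\cdot u=0$ together with $u=0$ on $\partial\Omega$, using integration by parts and the support of $\psi$). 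This gives a bound of the form $N\gamma^{-1}\int_{Q^+(2\gamma)}\bigl(|u|^3+|p-[p]_{0,1}|\,|u|\bigr)\,dz$, controlled through H\"older as $N\bigl(C(1)+C^{1/3}(1)D^{2/3}(1)\bigr)$ after the $\gamma^{-2}$ rescaling. Finally the forcing term contributes $\int 2 f\cdot u\,\psi\,dz$, which H\"older's inequality bounds in terms of $G(1)$ and a small piece already absorbed in $C^{2/3}(1)$ or $G(1)$ itself.

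Collecting the contributions and dividing by $(\gamma\rho)^{2}$ (to pass from integrated to averaged quantities in the definitions of $A$ and $E$) yields
$$
A(\gamma)+E(\gamma)\le N\gamma^{-2}\bigl[C^{2/3}(1)+C(1)+C^{1/3}(1)D^{2/3}(1)+G(1)\bigr],
$$
and the scale-invariance recovers the stated inequality for general $\rho$. The choice $\gamma=1/2$ specialized gives \eqref{AE_est_2}.

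The main obstacle is the handling of the pressure contribution $\int p\,u\cdot\nabla\psi$, since $D$ is defined in terms of $p-[p]_{{\hat x},\rho}$, not $p$ itself. The key observation that resolves this is the mean-zero property of $u\cdot\nabla\psi$ in $x$ (at each fixed time), which follows from $\nabla\cdot u=0$ and the Dirichlet boundary condition on $\partial\Omega\cap \operatorname{supp}\psi$; this lets us subtract an arbitrary function of $t$, and in particular the constant $[p]_{0,1}$, allowing $D$ to appear on the right. All remaining steps are straightforward H\"older estimates with careful tracking of the scaling factors in $\gamma$.
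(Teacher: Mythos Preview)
Your overall strategy---apply the local energy inequality \eqref{energy} with a cutoff, subtract the spatial mean of $p$ via the divergence-free identity $\int u\cdot\nabla\psi\,dx=0$, and estimate each right-hand side term by H\"older---is exactly the paper's approach. The handling of the pressure is correctly explained.

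There is, however, a quantitative slip in your choice of cutoff that prevents you from reaching the stated $\gamma^{-2}$ factor. With $\psi$ supported in $Q^+(2\gamma)$ you have $|\nabla\psi|\le N\gamma^{-1}$, so the convective term contributes
\[
N\gamma^{-1}\int_{Q^+(2\gamma)}\bigl(|u|^3+|p-[p]_{0,1}|\,|u|\bigr)\,dz
\le N\gamma^{-1}\bigl(C(1)+C^{1/3}(1)D^{2/3}(1)\bigr).
\]
After dividing by $\gamma^2$ (to pass to $A(\gamma)+E(\gamma)$ on the left) this becomes $N\gamma^{-3}\bigl(C(1)+C^{1/3}(1)D^{2/3}(1)\bigr)$, not $N\gamma^{-2}(\cdots)$. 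Shrinking the domain of integration to $Q^+(2\gamma)$ does not recover the missing factor of $\gamma$, since there is no extra H\"older gain for the cubic term. So your sentence ``controlled through H\"older as $N(C(1)+C^{1/3}(1)D^{2/3}(1))$ after the $\gamma^{-2}$ rescaling'' is off by a power of $\gamma$.

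The paper's fix is simply to take $\psi$ supported in $Q^+(1)$ with $\psi\equiv 1$ on $Q^+(\gamma)$ and $|\partial_t\psi|+|\nabla\psi|+|\nabla^2\psi|\le N$ \emph{independent of} $\gamma$ (possible because $\gamma\le 1/2$). Then all right-hand side integrals are over $Q^+(1)$ with $O(1)$ coefficients, and dividing by $\gamma^2$ produces exactly $N\gamma^{-2}\bigl[C^{2/3}(1)+C(1)+C^{1/3}(1)D^{2/3}(1)+G(1)\bigr]$. For the special case $\gamma=1/2$ (which is the only place the lemma is actually used, via \eqref{AE_est_2}) your argument would of course suffice, but it does not prove the lemma as stated for general $\gamma$.
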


As a conclusion, we obtain
\begin{proposition}
\label{prop1}
For any $\epsilon_0 >0$, there exists $\epsilon_1 >0$ small such that the following is true. For any ${\hat z}=({\hat x},{\hat t}) ,{\hat x}\in \partial\Omega$ satisfying $\omega({\hat z},R)=Q^{+}({\hat z},R)$ for some small $R$ and
\begin{equation}
\limsup_{r \searrow 0} E(r)\leq \epsilon_1,
\label{e_cond}
\end{equation}
we can find $\rho_0$ sufficiently small such that
\begin{equation*}
A(\rho_0)+E(\rho_0)+C(\rho_0)+D(\rho_0) \leq \epsilon_0.
\end{equation*}

\end{proposition}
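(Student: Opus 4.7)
The strategy is a two-scale iteration. First, pick a small ratio $\gamma$ (depending on $\epsilon_0$) and iterate Lemmas \ref{lem2.5}, \ref{D_est_lemma}, and \ref{aelemma} along $r_k = \gamma^k \rho^*$ to force $F(r_k) := C(r_k) + D(r_k)$ down to a stable value of order $\gamma^{3/2}$. Then take one additional step with the fixed ratio $1/2$ to convert smallness of $F$ into smallness of $A + E + C + D$.

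First I would fix $\gamma \in (0, 1/4]$ small enough that $N(\gamma + \gamma^{9/4}) \leq \tfrac14$, where $N$ denotes the common constant in \eqref{C_est} and \eqref{D_est}. Adding those two inequalities gives the contracted recursion
\begin{align*}
F(\gamma\rho) &\leq \tfrac14 F(\rho) + N\gamma^{-9/2}\bigl(A^{1/2}(\rho)E(\rho) + A^{3/4}(\rho)E^{3/4}(\rho)\bigr)\\
&\quad + N\gamma^{9/4}\bigl(A^{3/4}(\rho) + E^{3/4}(\rho)\bigr) + N\gamma^{-3}G^{3/4}(\rho),
\end{align*}
while Lemma \ref{aelemma} (together with the elementary bound $C^{1/3}D^{2/3} \leq F$) yields
\[
A(\gamma\rho) + E(\gamma\rho) \leq N\gamma^{-2}\bigl(F^{2/3}(\rho) + 2F(\rho) + G(\rho)\bigr).
\]
Using $\limsup_{r \searrow 0} E(r) \leq \epsilon_1$ and the absolute continuity of $\int|f|^6$, choose $\rho^* \leq R$ so that $E(r) \leq 2\epsilon_1$ and $G(r) \leq \epsilon_1$ for all $r \leq \rho^*$. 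Along the geometric sequence $r_k = \gamma^k\rho^*$, the two displayed inequalities form a coupled recursion for $F_k := F(r_k)$ and $A_k := A(r_k)$.

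The central step is to show by induction that $F_k$ is driven down to order $\gamma^{3/2}$. Assuming $F_{k-1} \leq M$, the $A$-estimate gives $A_k \leq N\gamma^{-2}(M^{2/3} + 2M + \epsilon_1)$; substituting into the $F$-recursion, the source is bounded by a constant multiple of $\gamma^{3/4}M^{1/2}$ in the relevant small-$M$ regime (and $\gamma^{3/4}M^{3/4}$ in the large-$M$ regime), plus terms that tend to zero with $\epsilon_1$. The crucial algebraic fact is that this source is sublinear in $M$ and dominated by $\tfrac14 M$ whenever $M \geq C\gamma^{3/2}$, so starting from $F_0 = F(\rho^*)$ (finite because $u \in L_3$ and $p \in L_{3/2}$) the sequence contracts geometrically until it reaches $O(\gamma^{3/2})$ and stabilizes. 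Choosing $\epsilon_1$ small depending on $\gamma$ absorbs the $\epsilon_1$-dependent corrections.

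Once $F_K \leq C_0 \gamma^{3/2}$, one more application of Lemma \ref{aelemma} at the fixed ratio $1/2$ (independent of the iteration $\gamma$) yields
\[
A(r_K/2) + E(r_K/2) \leq N'\bigl(F_K^{2/3} + 2F_K + \epsilon_1\bigr) \leq N' C_0^{2/3}\gamma + \text{lower order},
\]
while Lemmas \ref{lem2.5} and \ref{D_est_lemma} at ratio $1/2$ similarly bound $C(r_K/2) + D(r_K/2)$ in terms of $F_K$, $A_K$, $E_K$, $\epsilon_1$. Setting $\rho_0 := r_K/2$ and choosing $\gamma \leq c\epsilon_0$ with $c$ small (and $\epsilon_1$ accordingly small) completes the argument. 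The main obstacle is precisely that a single-scale iteration stabilizes $F_k$ at $O(\gamma^{3/2})$ but $A_k$ at $O(\gamma^{-1})$, which blows up as $\gamma \to 0$; the resolution is the two-scale device above, in which Lemma \ref{aelemma} is invoked once more at the fixed ratio $1/2$ so that its constant does not degenerate. Balancing all parameters so that the final values satisfy $A + E + C + D \leq \epsilon_0$ is the essence of the ``new iteration argument'' alluded to in the introduction.
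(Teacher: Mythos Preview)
Your two-scale scheme has a genuine gap at the very end. You correctly observe that along the $\gamma$-iteration $A_K$ only stabilizes at order $\gamma^{-1}$, and you fix this for $A(r_K/2)+E(r_K/2)$ by invoking Lemma~\ref{aelemma} at the fixed ratio $1/2$. But then you propose to bound $D(r_K/2)$ via Lemma~\ref{D_est_lemma} at ratio $1/2$, and that estimate contains the term $A^{3/4}(r_K)=A_K^{3/4}\sim\gamma^{-3/4}$, which blows up as $\gamma\to 0$; the argument does not close. The easy repair is to forgo Lemma~\ref{D_est_lemma} here and simply use the containment $Q^+(r_K/2)\subset Q^+(r_K)$, which gives $C(r_K/2)+D(r_K/2)\le N\bigl(C(r_K)+D(r_K)\bigr)=NF_K=O(\gamma^{3/2})$ directly; then $\rho_0=r_K/2$ works.

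Even with this fix, your route is more elaborate than necessary. The paper iterates the single quantity $\Phi(\rho)=A(\rho)+E(\rho)+C(\rho)+D(\rho)$ at one fixed ratio $\gamma$, using Young's inequality to split every mixed factor: e.g.\ $\gamma^{-3}A^{1/2}E\le \gamma^{2/3}A+N\gamma^{-a}E^2$, $\gamma^{-9/2}A^{3/4}E^{3/4}\le \gamma^{2/3}A+N\gamma^{-b}E^3$, and $\gamma^{9/4}A^{3/4}\le \gamma^{9/4}A+\gamma^{9/4}$. This collapses \eqref{C_est}, \eqref{D_est}, and \eqref{AE_est_2} into the linear one-step recursion
\[
\Phi(\gamma\rho)\le N\gamma^{2/3}\Phi(\rho)+N\gamma^{2/3}+N\gamma^{-100}\bigl(E(\rho)+E^3(\rho)+G(\rho)\bigr),
\]
so a standard geometric iteration (choose $\gamma$ so $N\gamma^{2/3}\le 1/3$, then $\epsilon_1$ to kill the last term) finishes immediately. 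There is no need to track $F$ and $A$ separately, no two-step nonlinear recursion, and no second scale. Incidentally, the ``new iteration argument'' mentioned in the introduction refers to Proposition~\ref{prop2}, not to this proposition, which is handled by the classical device just described.
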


\subsubsection{Step 2.}
                                    \label{sec2.2.2}
In the second step, first we estimate the values of $A$, $E$, $C$, and $D$ in a smaller ball by the values of themselves in a larger ball.
\begin{lemma}
\label{eeeeee}
Let $\rho >0$ and $\gamma \in (0,1/8]$ be constants. Suppose that ${\hat x}\in \partial\Omega$ and $\omega({\hat z},\rho)=Q^{+}({\hat z},\rho)$. Then we have
\begin{align}
\label{eree}
\nonumber&A(\gamma \rho)+E(\gamma \rho)\\
&\,\leq N \gamma^2A(\rho)+N\gamma^{-3}\big[C(\rho)+C^{1/3}(\rho)D^{2/3}(\rho)\big]+N\gamma^{-6}G(\rho),
\end{align}
where $N$ is a constant independent of $\rho$, $\gamma$, and ${\hat z}$.
\end{lemma}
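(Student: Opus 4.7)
The plan is to mimic the Seregin--Stokes decomposition argument that drives the proof of Lemma \ref{D_est_lemma}, now tracking the energy quantities $A$ and $E$ instead of the pressure quantity $D$. By the scale invariance of $A, E, C, D, G$ and a translation I reduce to $\hat z = 0$, $\rho = 1$, so the target becomes
\[
A(\gamma) + E(\gamma) \leq N\gamma^2 A(1) + N\gamma^{-3}\bigl[C(1) + C^{1/3}(1) D^{2/3}(1)\bigr] + N\gamma^{-6} G(1).
\]

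First I would set up the Stokes decomposition exactly as in the proof of Lemma \ref{D_est_lemma}: pick a smooth intermediate domain $\tilde B$ with $B^{+}(1/2) \subset \tilde B \subset B^{+}$, set $\tilde Q = \tilde B \times (-1, 0)$ and $\tilde f = -u \cdot \nabla u$, and invoke Lemma \ref{lest_1} to obtain $(v, p_1) \in W^{2,1}_{12/11, 3/2}(\tilde Q) \times W^{1,0}_{12/11, 3/2}(\tilde Q)$ solving the non-homogeneous Stokes problem with source $\tilde f + f$, zero parabolic boundary data, and $[p_1]_{\tilde B}(t) = 0$. The residual $w = u - v$, $p_2 = p - p_1 - [p]_{0, 1/2}$ then solves the homogeneous Stokes system on $\tilde Q$ with $w = 0$ on the flat portion of $\partial \tilde B$, and the norm bound \eqref{v1p1} for $(v, p_1)$ is available to control the nonlinear correction.

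For the smooth piece $w$, I would iterate Lemma \ref{lest_2} so that $(w, p_2)$ lies in $W^{2,1}_{s, 3/2}(Q^{+}(1/4))$ for $s$ large enough that the parabolic Sobolev embedding gives $w \in L^{\infty}(Q^{+}(1/4))$ and $\nabla w \in L^{s}(Q^{+}(1/4))$. Since $|B^{+}(\gamma)| \le N\gamma^4$ and $|\omega(\gamma)| \le N\gamma^6$, Hölder's inequality yields for $\gamma \leq 1/8$
\[
A(\gamma, w) + E(\gamma, w) \leq N\gamma^2 \bigl(\|w\|^2_{L^{\infty}(Q^{+}(1/4))} + \|\nabla w\|^2_{L^{s}(Q^{+}(1/4))}\bigr).
\]
Feeding these norms back through Lemma \ref{lest_2} bounds them by $u$-norms on $Q^{+}(1/2)$, whose leading term is controlled by $A(1)^{1/2}$, plus $v$ and $p_1$ norms already dominated by $C, D, G$ via \eqref{v1p1}. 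This produces the desired $N\gamma^2 A(1)$ contribution.

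For the Stokes-corrected part $v$, I would combine \eqref{v1p1} with an anisotropic parabolic Sobolev embedding of $W^{2,1}_{12/11, 3/2}$ into $L^{\infty} L^{q}$ at the $q$ for which the volume factor $|B^{+}(\gamma)|^{1-2/q}$ yields the required $\gamma^{-3}$ power, giving $A(\gamma, v) + E(\gamma, v) \leq N\gamma^{-3}(\|\tilde f\|^2_{L^{12/11, 3/2}} + \|f\|^2_{L^{12/11, 3/2}})$-type bounds. Hölder plus 4D Sobolev and interpolation then yield $\|\tilde f\|_{L^{12/11, 3/2}} \leq N \|u\|_{L^{12/5, 6}} \|\nabla u\|_{L^{2}} \leq N A^{1/3}(1) E^{2/3}(1)$; the Gagliardo--Nirenberg inequality $C \leq A^{1/2} E$ in 4D combined with the pressure quantity $D$ then allows me to repackage the resulting $A^{2/3} E^{4/3}$ into the prescribed combination $C(1) + C^{1/3}(1) D^{2/3}(1)$. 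The force contributes $\|f\|_{L^{12/11, 3/2}} \leq N G^{1/2}(1)$, and pairing with the appropriate embedding produces the $N\gamma^{-6} G(1)$ term.

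The main obstacle will be eliminating the $E(1)$-dependence from the right-hand side: the natural Stokes bound for $\tilde f$ is proportional to $A^{1/3}(1) E^{2/3}(1)$, and reshaping this into the prescribed $C + C^{1/3}D^{2/3}$ form requires a careful use of $C \leq A^{1/2}E$ together with the pressure estimate, essentially the same repackaging that makes $C$ and $D$ the natural iteration variables in Section \ref{sec2.2.2}. A secondary delicate point is matching the anisotropic Sobolev embedding exponents so as to produce exactly the stated powers $\gamma^{-3}$ on $C + C^{1/3}D^{2/3}$ and $\gamma^{-6}$ on $G$.
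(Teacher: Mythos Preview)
Your Stokes-decomposition strategy is fundamentally different from the paper's proof and contains a genuine gap at the very point you flag as the ``main obstacle.''

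The paper does not decompose $u$ at all. It proves Lemma \ref{eeeeee} by testing the generalized local energy inequality \eqref{energy} with $\psi=\Gamma\phi$, where $\Gamma$ is the backward heat kernel
\[
\Gamma(x,t)=\dfrac{1}{4\pi^2(\gamma^2+\hat t-t)^2}\,e^{-\frac{|x-\hat x|^2}{2(\gamma^2+\hat t-t)}}
\]
and $\phi$ is a standard cutoff supported in $Q(\hat z,1)$. Since $\Gamma_t+\Delta\Gamma=0$, the dangerous $|u|^2(\psi_t+\Delta\psi)$ term reduces to derivatives of $\phi$, which are supported away from $Q^+(\hat z,\gamma)$ and uniformly bounded; this produces the clean $\gamma^2 A(1)$ contribution. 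The remaining terms $(|u|^2+2p)u\cdot\nabla\psi$ and $f\cdot u\psi$ are estimated using the pointwise bounds $|\Gamma\phi|\le N\gamma^{-4}$ and $|\nabla(\Gamma\phi)|\le N\gamma^{-5}$, together with H\"older, yielding exactly $\gamma^{-3}(C+C^{1/3}D^{2/3})$ and $\gamma^{-6}G$. Crucially, no $\nabla u$ ever appears on the right-hand side of the energy inequality, so $E(1)$ never enters.

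Your approach cannot achieve this. The source $\tilde f=-u\cdot\nabla u$ in the Stokes problem for $(v,p_1)$ is controlled, at best, by quantities like $A^{1/3}(1)E^{2/3}(1)$ or $C^{1/3}(1)E^{1/2}(1)$; any H\"older splitting of $u\cdot\nabla u$ must put $\nabla u$ in some Lebesgue space, and the only available control on $\nabla u$ is through $E$. Your proposed ``repackaging'' of $A^{2/3}E^{4/3}$ into $C+C^{1/3}D^{2/3}$ via ``$C\le A^{1/2}E$'' goes the wrong way: Gagliardo--Nirenberg gives an \emph{upper} bound $C\le N(A+E)^{3/2}$, not a lower bound, so you cannot replace powers of $E$ by powers of $C$. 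The same issue infects the $w$-piece, since Lemma \ref{lest_2} requires $\|\nabla w\|_{L_{12/11,3/2}}$ on the right, which again pulls in $E^{1/2}(1)$. Thus your argument, even if the embedding exponents were sorted out, would prove at best an inequality with an extra $N\gamma^2 E(1)$ (and possibly $D^{4/3}(1)$) on the right, not the stated lemma. The energy-inequality/heat-kernel method is the natural route here precisely because it places $|\nabla u|^2$ on the left side from the outset.
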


In the next proposition, we study the decay property of $A$, $C$, $E$, and $D$ as the radius $\rho$ goes to zero.
\begin{proposition}
\label{prop2}
There exists $\epsilon_0>0$ satisfying the following property. Suppose that for some ${\hat z}=({\hat x},{\hat t})$, where ${\hat x}\in \partial\Omega$, and $\rho_0 >0$, it holds that $\omega({\hat z},\rho_0)=Q^{+}({\hat z},\rho_0)$ and
\begin{equation}
C(\rho_0)+D(\rho_0)+G(\rho_0)\leq \epsilon_0.
\label{cond_cdf}
\end{equation}
Then we can find $N>0$ and $\alpha_0 \in (0,1)$ such that for any $\rho \in (0,\rho_0/4)$ and $z^* \in Q({\hat z},\rho_0/4)\cap (\partial\Omega\times({\hat t}-\rho_0^2/16,{\hat t}))$, the following inequality holds uniformly
\begin{equation}
A(\rho,z^*)+C^{2/3}(\rho,z^*)+E(\rho,z^*)+ D(\rho,z^*)\leq N \rho ^{\alpha_0},
\label{cdf}
\end{equation}
where $N$ is a positive constant independent of $\rho$ and $z^*$.
\end{proposition}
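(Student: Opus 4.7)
My plan is to establish \eqref{cdf} via a discrete iteration argument applied to the scale-invariant quantity
\begin{equation*}
\Psi(\rho, z^*) := A(\rho, z^*) + E(\rho, z^*) + C^{2/3}(\rho, z^*) + D(\rho, z^*),
\end{equation*}
with Lemmas~\ref{lem2.5}, \ref{D_est_lemma}, and \ref{eeeeee} as the main inputs. First I would upgrade the one-scale hypothesis at $\hat z$ into uniform smallness of $\Psi$ at a slightly smaller scale centered at any admissible boundary point $z^*$. Since $z^*$ lies within $\rho_0/4$ of $\hat z$ in the parabolic metric, the inclusion $\omega(z^*, \rho_0/2) \subset \omega(\hat z, \rho_0)$, together with the minimizing property of the mean $[p]_{\hat x, \rho_0}$, yields $C(\rho_0/2, z^*) + D(\rho_0/2, z^*) + G(\rho_0/2, z^*) \leq N\epsilon_0$; applying Lemma~\ref{aelemma} at scale $\rho_0/2$ with $\gamma = 1/2$ then gives $A(\rho_0/4, z^*) + E(\rho_0/4, z^*) \leq N\epsilon_0^{2/3}$, and hence $\Psi(\rho_0/4, z^*) \leq N\epsilon_0^{2/3}$ uniformly in $z^*$. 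Shrinking $\epsilon_0$ if necessary, we may assume $\Psi(\rho_0/4, z^*) \leq \epsilon_*$ for any prescribed threshold $\epsilon_*$.

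The heart of the argument is a one-step decay inequality at every scale $\rho \leq \rho_0/4$ where $\Psi(\rho, z^*) \leq \epsilon_*$. Applying Lemmas~\ref{lem2.5}, \ref{D_est_lemma}, and \ref{eeeeee} at scale $\rho$ with a ratio $\gamma \in (0, 1/8]$ and summing, each genuinely nonlinear product---$A^{1/2}E$, $A^{3/4}E^{3/4}$, $C$, $C^{1/3}D^{2/3}$---can be bounded by $\epsilon_*^{\delta}\, \Psi(\rho)$ for some $\delta > 0$ using $\Psi \leq \epsilon_*$. This tames the large prefactors $\gamma^{-3}$ and $\gamma^{-9/2}$ appearing in the lemmas once $\gamma$ is fixed small and $\epsilon_*$ is then taken much smaller (essentially $\epsilon_*^{1/2} \ll \gamma^{K}$ for some large $K$). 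Combined with $G(\rho) \leq N\rho^4$ from $f \in L_6(Q_T)$, the outcome is
\begin{equation*}
\Psi(\gamma\rho, z^*) \leq \theta\, \Psi(\rho, z^*) + N\rho^{\beta}
\end{equation*}
for some $\theta \in (0,1)$ and $\beta > 0$, and the smallness hypothesis $\Psi \leq \epsilon_*$ is propagated when $\rho_0$ is taken sufficiently small. Iterating inductively from $k = 0$ gives $\Psi(\gamma^k \rho_0/4, z^*) \leq N\gamma^{k\alpha_0}$ with $\alpha_0 := \min\{\beta, \log(1/\theta)/\log(1/\gamma)\} \in (0,1)$, and a routine comparison between $\rho$ and the nearest dyadic scale delivers \eqref{cdf}; uniformity in $z^*$ is built into the construction.

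The principal technical hurdle is the sublinear term $\gamma^{9/4}(A^{3/4} + E^{3/4})$ in Lemma~\ref{D_est_lemma}, which is an artifact of Seregin's pressure decomposition at the boundary and has no counterpart of order $\Psi^{3/2}$ in the interior case. To convert $\gamma^{9/4}\Psi(\rho)^{3/4}$ into a form compatible with the linear iteration, I plan to use a weighted Young inequality $t^{3/4} \leq \tfrac{3}{4}\lambda t + \tfrac{1}{4}\lambda^{-3}$ with $\lambda$ calibrated against the previous iterate, splitting the sublinear contribution into a linear piece absorbable into the main iteration plus a residual dominated by $\rho^{\beta}$. This weighting flexibility is what I suspect is the essence of the ``new iteration argument'' mentioned in the introduction and what dispenses with the auxiliary scale-invariant quantity employed in \cite{Dong_13}.
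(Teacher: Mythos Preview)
Your setup through the first paragraph is fine, and you correctly identify the sublinear term $\gamma^{9/4}(A^{3/4}+E^{3/4})$ from Lemma~\ref{D_est_lemma} as the obstruction. However, the proposed fix via a weighted Young inequality inside a \emph{fixed-$\gamma$} iteration cannot close. Writing $t^{3/4}\le \tfrac34\lambda t+\tfrac14\lambda^{-3}$, you need simultaneously (i) the linear piece $N\gamma^{9/4}\lambda\,\Psi(\rho)$ to carry a coefficient $<1$, which forces $\lambda$ bounded, and (ii) the residual $N\gamma^{9/4}\lambda^{-3}$ to be $\le N\rho^{\beta}$, which forces $\lambda\gtrsim\rho^{-\beta/3}\to\infty$. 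These are incompatible: with $\lambda$ bounded the residual is a fixed constant and the recursion $M_{k+1}\le\theta M_k+C$ yields only boundedness, not decay; with $\lambda\to\infty$ the contraction factor exceeds $1$. Equivalently, if $\Psi(\rho)\sim\rho^{\alpha_0}$ then $\gamma^{9/4}\Psi(\rho)^{3/4}\sim\gamma^{9/4}\rho^{3\alpha_0/4}$ dominates $\theta\,\Psi(\gamma\rho)\sim\theta\gamma^{\alpha_0}\rho^{\alpha_0}$ as $\rho\to 0$ for any fixed $\gamma$. A sublinear power cannot be beaten by a fixed contraction factor.

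The paper's iteration is structurally different: it lets $\gamma$ shrink with the scale, taking $\gamma=\rho_k^{\beta}$ for a small fixed $\beta$, so that the scales $\rho_{k+1}=\rho_k^{1+\beta}$ form a super-geometric sequence, and proves inductively that $A+E+C^{2/3}$ and $D$ are each $\le\rho_k^{1/10}$. The extra factor $\gamma^{9/4}=\rho_k^{9\beta/4}$ is exactly what tames the sublinear term. There is a further subtlety you would still have to discover: the constraint on $\beta$ coming from the $A+E$ step (the term $\gamma^{-3}C^{1/3}D^{2/3}$ in Lemma~\ref{eeeeee} forces roughly $\beta<1/186$) is \emph{incompatible} with the constraint from the $D$ step taken at the same scale (the term $\gamma^{9/4}A^{3/4}$ requires roughly $\beta>1/86$). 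The paper resolves this by estimating $D(\rho_{k+1})$ not from step $k$ but from step $k-2$, applying Lemma~\ref{D_est_lemma} with $\gamma=\rho_{k-2}^{\tilde\beta}$, $\tilde\beta=(1+\beta)^3-1\approx 3\beta$; the effective exponent $\tilde\beta$ is then large enough for the $D$-constraint while $\beta$ itself remains small enough for $A+E$. This decoupling of the two estimates across different levels of the induction is the actual content of the ``new iteration argument.''
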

\begin{proof}
Let $\epsilon'>0$ be a small constant to be specified later. Due to \eqref{AE_est_2} and \eqref{cond_cdf}, one can find $\epsilon_0=\epsilon_0(\epsilon')>0$ sufficiently small such that,
\begin{equation*}
\epsilon_0<\epsilon'^4,\quad A(\rho_0/2)+E(\rho_0/2)\leq \epsilon',\quad D(\rho_0/2) \leq \epsilon'.
\end{equation*}
Without loss of generality, we can assume that $\rho_0=\epsilon'$. If $\rho_0\neq \epsilon'$, since $C$, $D$, and $G$ are invariant under the natural scaling, we get \eqref{cdf} with $N$ proportional to $\rho_0^{-\alpha_0}$ after a scaling.

Owing to Lemma \ref{interpolation} with $q=3$, we get
\begin{equation}
C(\rho)\leq N\big[A(\rho)+E(\rho)\big]^{3/2},
\label{hhh}
\end{equation}
which implies
\begin{equation*}
C(\rho_0/2)\leq N\epsilon'^{3/2}.
\end{equation*}

For any $z^* \in Q({\hat z},\rho_0/4)\cap (\partial\Omega\times({\hat t}-\rho_0^2/16,{\hat t}))$, by using
\begin{equation*}
Q^{+}(z^*,\rho_0/4)\subset Q^{+}({\hat z},\rho_0/2)\subset Q_T,
\end{equation*}
we get
\begin{equation*}
A(\rho_1,z^*)+E(\rho_1,z^*)+C^{2/3}(\rho_1,z^*)+D(\rho_1,z^*) \leq N\epsilon'
\end{equation*}
with $\rho_1=\rho_0/4$.

Next, we shall prove inductively that
\begin{equation}
A(\rho_k,z^*)+E(\rho_k,z^*)+C^{2/3}(\rho_k,z^*)\le \rho_k^{\frac{1}{10}},
\quad D(\rho_k,z^*) \leq \rho_k^{\frac{1}{10}},
\label{pop}
\end{equation} where $\rho_k=\rho_1^{(1+\beta)^k}$ and $\beta=\frac{1}{200}$ for $k=1,2,\cdots$.

It is easy to see that \eqref{pop} holds for $k=1,2,3$ by choosing $\epsilon'$ sufficiently small.
Suppose that \eqref{pop} holds for $k\ge 3$. Since $\rho_{k+1}=\rho_{k}^{1+\beta}$, by using \eqref{eree} with $\gamma=\rho_k^\beta$ and $\rho=\rho_k$, we have

\begin{align}
\nonumber A(\rho_{k+1})+E(\rho_{k+1}) &\leq N \rho_{k}^{2\beta}A(\rho_k)+N \rho_k^{-3\beta}(C(\rho_k)+C^{1/3}(\rho_k)D^{2/3}(\rho_k))\\\nonumber&\quad+N\rho_k^{-6\beta}G(\rho_k)\\&\leq N \rho_k^{2\beta+\frac{1}{10}}+N\rho_k^{-3\beta+\frac{7}{60}}+N\rho_k^{-6\beta+4}.\label{ind1}
\end{align}
Here we used, for any $\rho\le \rho_0$,
\begin{equation}
                            \label{11.30}
G(\rho) \leq \rho^{4}\|f\|^2_{L^{6}(Q^{+}({\hat z},\rho))}
\leq  \rho^{4}\|f\|^2_{L^{6}(Q^{+}({\hat z},\rho_0))}
\le \rho^4\epsilon'^{-4}\epsilon_0\leq \rho^4,
\end{equation}
which follows from the definition of $G$ and the choice of $\epsilon_0$.
Since
$$
\min\{2\beta+\frac{1}{10},-3\beta+\frac{7}{60},-6\beta+4\}>\frac{1}{10}(1+\beta),
$$
we have
\begin{align*}
A(\rho_{k+1})+E(\rho_{k+1})\leq N\rho_{k+1}^{\frac{1}{10}+\xi}\,\, \text{for some}\,\, \xi>0,
\end{align*}
and by \eqref{hhh},
\begin{equation*}
C(\rho_{k+1})\leq N\rho_{k+1}^{\frac{3}{20}+\frac{3}{2}\xi}.
\end{equation*}
Here $N$ is a constant independent of $k$ and $\xi$.
By taking $\epsilon'$ sufficiently small that $N\epsilon'^{\xi} < 1$, we obtain
\begin{align}
\label{ind2}
A(\rho_{k+1})+E(\rho_{k+1})+C^{2/3}(\rho_{k+1})\leq \rho_{k+1}^{\frac{1}{10}}.
\end{align}

To estimate the remaining term $D(\rho_{k+1})$,
we apply Lemma \ref{D_est_lemma}. It turns out that, different from above, using the estimates of $A(\rho_{k})$, $E(\rho_k)$, and $D(\rho_k)$, one cannot get the estimate of $D(\rho_{k+1})$ as claimed. Instead, we shall bound $D(\rho_{k+1})$ by using the estimates which we get in the $k-2$-th step. By defining $\tilde \beta=(1+\beta)^3-1$ and using \eqref{D_est} with $\gamma=\rho_{k-2}^{\tilde \beta}$ and $\rho=\rho_{k-2}$, we get
\begin{align*}
D(\rho_{k+1})&\leq N \rho_{k-2}^{-3{\tilde \beta}}E(\rho_{k-2})A^{1/2}(\rho_{k-2})+ N\rho_{k-2}^{9\tilde \beta/4}D(\rho_{k-2})\\&\quad+N\rho_{k-2}^{9\tilde \beta/4}\big(A^{3/4}(\rho_{k-2})+E^{3/4}(\rho_{k-2})\big)+N \rho_{k-2}^{-3\tilde \beta}G^{3/4}(\rho_{k-2})\\&\leq N\rho_{k-2}^{-3\tilde \beta+\frac{3}{20}}+N\rho_{k-2}^{\frac{9}{4}\tilde \beta+\frac{1}{10}}+N\rho_{k-2}^{\frac{9}{4}\tilde \beta+\frac{3}{40}}+N\rho_{k-2}^{-3\tilde \beta+3}.
\end{align*}
Since $\min\{-3\tilde \beta+\frac{3}{20},\frac{9}{4}\tilde \beta+\frac{1}{10},\frac{9}{4}\tilde \beta+\frac{3}{40},-3\tilde \beta+3\}>\frac{1}{10}(1+\beta)^3$, we have
\begin{align*}
D(\rho_{k+1})\leq \rho_{k+1}^{\frac{1}{10}}
\end{align*}
by taking $\epsilon'$ sufficiently small, but independent of $k$.

Now for any $\rho \in (0,\rho_0/4)$, we can find a positive integer $k$ such that $\rho_{k+1}\leq \rho < \rho_k$. Therefore,
\begin{align*}
&A(\rho)+E(\rho)+C^{2/3}(\rho)+D(\rho)\\
&\le \rho_k^3 \rho_{k+1}^{-3}\big(A(\rho_k)+E(\rho_k)+C^{2/3}(\rho_k)+D(\rho_k)\big)\\
&\le 2\rho_k^{\frac 1 {10}-3\beta}\leq 2 \rho^{\frac{1}{1+\beta}(\frac 1 {10}-3\beta)}.
\end{align*}
By choosing $\alpha_0=\frac{1}{1+\beta}(\frac 1 {10}-3\beta)$, the lemma is proved.
\end{proof}

\subsubsection{Step 3}
\label{boot}
In the final step, we shall use a bootstrap argument to successively improve the decay estimate \eqref{cdf}. However, as we will show below, the bootstrap argument itself only gives the decay of $E(\rho)$ no more than $\rho^{2}$, which is not enough for the H\"{o}lder regularity of $u$ since the spatial dimension is four (so that we need the decay exponent $2+\delta$ according to Campanato's characterization of H\"older continuous functions). We shall use parabolic regularity to fill in this gap.

First we prove Theorem \ref{th2}. We begin with the bootstrap argument. We will choose an increasing sequence of real numbers $\{\alpha_k\}_{k=1}^{m}\in (\alpha_0,2)$ so that, under the condition \eqref{cond_cdf} with ${\hat x}\in \partial\Omega$, the following estimates hold uniformly for all $\rho >0$ sufficiently small and $z^* \in Q({\hat z},\rho_0/4)\cap (\partial\Omega\times({\hat t}-\rho_0^2/16,{\hat t}))$ over the range of $\{\alpha_k\}_{k=0}^m$:
\begin{align}
\label{esti}
&A(\rho,z^*)+E(\rho,z^*)\leq N \rho^{\alpha_k},\quad C(\rho,z^*)\leq N\rho^{3\alpha_k/2},\\
\label{esti_D}
&D(\rho,z^*)\leq N\rho^{5\alpha_k/6}.
\end{align}
We prove this via iteration. The $k=0$ case for \eqref{esti} and \eqref{esti_D} was proved in \eqref{cdf} with a possibly different exponent $\alpha_0$.
Now suppose that \eqref{esti} and \eqref{esti_D} hold with the exponent $\alpha_k$. We first estimate $A(\rho,z^*)$ and $E(\rho,z^*)$. Let $\rho=\tilde{\gamma}\tilde{\rho}$ where $\tilde{\gamma}=\rho^{\mu}$, $\tilde{\rho}=\rho^{1-\mu}$ and $\mu \in (0,1)$ to be determined. We use \eqref{eree}, \eqref{esti}, and \eqref{11.30} to obtain
\begin{equation*}
A(\rho)+E(\rho) \leq N \rho^{2\mu}\rho^{\alpha_k(1-\mu)}+N \rho^{-3\mu}\rho^{\frac{19}{18}\alpha_k(1-\mu)}+N \rho^{4(1-\mu)}\rho^{-6\mu}.
\end{equation*}
Choose $\mu=\dfrac{\alpha_k}{90+\alpha_k}$. Then \eqref{esti} is proved for $A(\rho)+E(\rho)$ with the exponent
\begin{align*}
\alpha_{k+1}&:=\min\Big\{2\mu+\alpha_k(1-\mu),
\frac{19}{18}\alpha_k(1-\mu)-3\mu,4(1-\mu)-6\mu\Big\}\\
&=\dfrac{92}{90+\alpha_k}\alpha_k \in (\alpha_k,2).
\end{align*}
The estimate in \eqref{esti} with $\alpha_{k+1}$ in place of $\alpha_k$ for $C(\rho,z^*)$ follows from \eqref{hhh} immediately.
To prove the estimate in \eqref{esti_D} with $\alpha_{k+1}$, we use Lemma \ref{D_est_lemma}. Let $\rho=\tilde{\gamma}\tilde{\rho}$, where $\tilde{\gamma}=\rho^{\mu}$ and $\tilde{\rho}=\rho^{1-\mu}$ with a constant $\mu \in (0,1)$ to be specified. From \eqref{D_est}, \eqref{esti} with $\alpha_{k+1}$ in place of $\alpha_k$, \eqref{esti_D}, and \eqref{11.30}, we have
\begin{align*}
D(\rho) &\leq N\big[\rho^{-3\mu+\frac{3}{2}\alpha_{k+1}(1-\mu)}+\rho^{9\mu/4+\frac{5}{6}\alpha_{k}(1-\mu)}+\rho^{9\mu/4+\frac{3}{4}\alpha_{k+1}(1-\mu)}\\&\quad+\rho^{-3\mu+3(1-\mu)}\big].
\end{align*}
Choose $\mu=\dfrac{\alpha_{k+1}}{7+\alpha_{k+1}}$. Then we get
\begin{align*}
&\min\Big\{-3\mu+\frac{3}{2}\alpha_{k+1}(1-\mu),9\mu/4+\frac{5}{6}\alpha_{k}(1-\mu),9\mu/4+\frac{3}{4}\alpha_{k+1}(1-\mu),\\&\quad\quad -3\mu+3(1-\mu)\Big\}\\
&=\frac{15\alpha_{k+1}}{14+2\alpha_{k+1}},
\end{align*}
and
\begin{align*}
D(\rho)\leq N\rho^{\frac{15\alpha_{k+1}}{14+2\alpha_{k+1}}}\leq N\rho^{5\alpha_{k+1}/6}
\end{align*}
since $\alpha_{k+1}\in(0,2)$.
Moreover,
\begin{equation*}
2-\alpha_{k+1}=\dfrac{90}{90+\alpha_k}(2-\alpha_k) \leq \dfrac{90}{90+\alpha_0}(2-\alpha_k).
\end{equation*}
Thus, for any sufficiently small $\delta$, we can find a $m$ that $\alpha_m\geq 2-\delta$.

Via the bootstrap argument, we have got the following estimates for all $\rho >0$ sufficiently small and $z^*=(x^*,t^*) \in Q({\hat z},\rho_0/4)\cap (\partial\Omega\times({\hat t}-\rho_0^2/16,{\hat t}))$:
\begin{equation}
\sup_{t^*-\rho^2 \leq t\leq t^*}\int_{B^{+}(x^*,\rho)}|u(x,t)|^2\,dx \leq N \rho^{4-\delta},
\label{pp}
\end{equation}
\begin{equation}
\int_{Q^{+}(z^*,\rho)}|p-[p]_{x^*,\rho}|^{3/2} \,dz \leq N \rho^{3+\frac{5}{6}(2-\delta)},
\label{ppp}
\end{equation}
\begin{equation}
\int_{Q^{+}(z^*,\rho)}|u|^3 \,dz \leq N \rho^{3+\frac{3}{2}(2-\delta)}.
\label{pppp}
\end{equation}

Finally, we use the parabolic regularity theory to improve the decay estimate of mean oscillations of $u$ and then complete the proof.
We rewrite \eqref{ns} (in the weak sense) into
\begin{equation}
                                        \label{eq11.55}
\partial_t u_i- \Delta u_i=-\partial_j(u_iu_j)-\partial_i p+f_i.
\end{equation}
Due to \eqref{pp} and \eqref{pppp}, there exists $\rho_1 \in (\rho/2,\rho)$ such that
\begin{equation}
\begin{aligned}
\int_{B^{+}(x^*,\rho_1)}|u(x,t^*-\rho_1^2)|^2\,dx &\leq N \rho^{4-\delta},\\
\int_{t^*-\rho_1^2}^{t^*}\int_{S^{+}(x^*,\rho_1)}|u|^3 \,dx\,dt &\leq N \rho^{2+\frac{3}{2}(2-\delta)}.
\end{aligned}
\label{oo}
\end{equation}
Let $v$ be the unique weak solution to the heat equation
\begin{equation*}
\partial_t v-\Delta v=0 \ \ \text{in} \ \ Q^{+}(z^*,\rho_1)
\end{equation*}
with the boundary condition $v=u$ on $\partial_p Q^{+}(z^*,\rho_1)$. Since $v$ vanishes on the flat boundary part, it follows from the standard estimates for the heat equation, H\"{o}lder's inequality, and \eqref{oo} that
\begin{align}
 &\sup_{Q^{+}(z^*,\rho_1/2)}|\nabla v| \nonumber\\
 &\,\leq N \rho_1^{-6}\int_{t^*-\rho_1^2}^{t^*}\int_{S^{+}(x^*,\rho_1)}|v|\,  dx\,dt+N\rho_1^{-5}\int_{B^{+}(x^*,\rho_1)}|v(x,t^*-\rho_1^2)|\,dx\nonumber\\
 &\,\leq N \rho^{-1-\delta/2}.
\label{uu}
\end{align}
Denote $w=u-v.$ Then $w$ satisfies the inhomogeneous heat equation
\begin{equation*}
\partial_t w_i -\Delta w_i = -\partial_j(u_iu_j)-\partial_i (p-[p]_{x^*,\rho})+f_i\ \ \text{in} \ \ Q^{+}(z^*,\rho_1)
\end{equation*}
with the zero boundary condition. By the classical $L_p$ estimate for the heat equation, we have
\begin{align*}
\nonumber\|\nabla w\|_{L_{3/2}(Q^{+}(z^*,\rho_1))} &\leq N \||u|^2\|_{L_{3/2}(Q^{+}(z^*,\rho_1))} +N \|p-[p]_{x^*,\rho}\|_{L_{3/2}(Q^{+}(z^*,\rho_1))}\\&\quad
+N\rho_1\|f\|_{L_{3/2}(Q^{+}(z^*,\rho_1))},
\end{align*}
which together with \eqref{ppp}, \eqref{pppp}, and the condition $f\in L_{6}$ yields
\begin{equation}
                        \label{eq10.42}
\int_{Q^{+}(z^*,\rho_1)}|\nabla w|^{3/2}\,dz\le N\rho^{3+5(2-\delta)/6}.
\end{equation}
Since $|\nabla u|\leq |\nabla w|+|\nabla v|$, we combine \eqref{uu} and \eqref{eq10.42} to obtain, for any $r \in (0,\rho/4)$, that
\begin{equation*}
\int_{Q^{+}(z^*,r)}|\nabla u|^{3/2} \,dz \leq N \rho^{3+5(2-\delta)/6}+ r^{6}\rho^{-3/2-\frac{3}{4}\delta}.
\end{equation*}
Upon taking $\delta = \frac{1}{20}$ and $r=\rho^{1000/973}/4$ (with $\rho$ small), we deduce
\begin{equation}
                                \label{eq11.50}
\int_{Q^{+}(z^*,r)}|\nabla u|^{3/2}\,dz \leq N r^{q},
\end{equation}
where
\begin{equation*}q=\frac{36001}{8000}>6-\frac 3 2.
\end{equation*}

Since $u\in \cH^1_{3/2}$ is a weak solution to \eqref{eq11.55}, it then follows from Lemma \ref{lem11.31}, \eqref{eq11.50}, \eqref{ppp}, and \eqref{pppp} with $r$ in place of $\rho$ that
\begin{align}
\nonumber &\int_{Q^{+}(z^*,r)}|u-(u)_{z^*,r}|^{3/2}\,dz \\
\nonumber &\, \leq N r^{3/2}\int_{Q^{+}(z^*,r)}\big|\nabla u|^{3/2}+(|u|^2)^{3/2}+|p-[p]_{x^*,r}|^{3/2}+r^{3/2}|f|^{3/2}\big)\,dz \\
                        \label{eq12.30}
&\, \leq N r^{q+3/2}
\end{align}
for any $r\le r_0$ small and $z^*\in Q({\hat z},\rho_0/4)\cap (\partial\Omega\times({\hat t}-\rho_0^2/16,{\hat t}))$. Allowing $N$ to depend on $r_0$, we get \eqref{eq12.30} for any $r\le \rho_0/4$. Now let $\rho_2\in (0,\rho_0/8)$ be a constant to be specified later. For any $\tilde z=(\tilde t,\tilde x)\in Q({\hat z},\rho_2)$, let  $\tilde r=\tilde x_{4}$ and $z^*=(\tilde x_{1},\tilde x_{2},\tilde x_{3},0,\tilde t)$ be the projection of $\tilde z$ on the boundary of the domain. Note that $z^*\in Q({\hat z},\rho_0/4)\cap (\partial\Omega\times({\hat t}-\rho_0^2/16,{\hat t}))$ . We consider two cases.

{\em Case 1:} $\tilde r<r\le \rho_2$. In this case, we have $\omega(\tilde z,r)\subset Q^+(z^*,2r)$. Thus by \eqref{eq12.30}, we have
\begin{equation}
                               \label{eq12.31}
\int_{\omega(\tilde z,r)}|u-(u)_{\tilde z,r}|^{3/2}\,dz \leq N\int_{Q^{+}(z^*,2r)}|u-(u)_{z^*,2r}|^{3/2}\,dz\leq N r^{q+3/2}.
\end{equation}

{\em Case 2:} $r\le \tilde r<\rho_2$. In this case, we apply the corresponding interior estimate obtained in \cite{Dong_13}. Since $Q(\tilde z,\tilde r)\subset Q^+(z^*,2\tilde r)$, by Proposition \ref{prop2}, we have
$$
C^{2/3}(\tilde r,\tilde z)+D(\tilde r,\tilde z)\le NC^{2/3}(2\tilde r,z^*)+ND(2\tilde r,z^*)\le N\tilde r^{\alpha_0}\le N\rho_2^{\alpha_0}.
$$
Choosing $\rho_2$ sufficiently small, by a similar argument for the interior regularity (see \cite{Dong_13}), we get
\begin{equation}
                               \label{eq12.32}
\int_{\omega(\tilde z,r)}|u-(u)_{\tilde z,r}|^{3/2}\,dz = \int_{Q(\tilde z,r)}|u-(u)_{\tilde z,r}|^{3/2}\,dz\leq N r^{q+3/2}.
\end{equation}
It is worth mentioning that in \cite{Dong_13} an additional scale-invariant quantity $F(r,z)$ was introduced, which is a mixed space-time norm of the pressure $p$, in order to get an initial decay estimate. In view of the proof of Proposition \ref{prop2}, we can avoid using this quantity in the proof.

By Campanato's characterization of H\"older continuous functions near a flat boundary (see, for instance, \cite[Lemma 4.11]{Lieb_96}), from \eqref{eq12.31} and \eqref{eq12.32} we see that $u$ is H\"{o}lder continuous in a neighborhood of ${\hat z}$. This completes the proof of Theorem \ref{th2}.

Theorem \ref{mainthm} then follows from Theorem \ref{th2} by applying Proposition \ref{prop1}. Finally, we can prove that Theorem \ref{mainthm} also holds for a $C^2$ domain similarly by following the argument in \cite{Seregin_04, Mikhailov_11}. We give a brief description of the argument. Since the boundary is $C^2$, we can find a diffeomorphism $\phi$ similar to the one in \cite[Section 2]{Seregin_04} to locally transform the original domain to a domain with flat boundary. Meanwhile, the Navier--Stokes equations become perturbed Navier--Stokes equations. It is crucial that the diffeomorphism $\phi$ is chosen with a smallness condition on the local $C^1$ norm of $\phi$ so that Lemmas \ref{lest_1} and \ref{lest_2} hold true for the perturbed Navier--Stokes equation, and consequently for the original Navier--Stokes equations in the $C^2$ domain (see \cite[Proposition 3.2]{Seregin_04}). Then we can similarly prove Proposition \ref{D_est_lemma}, Lemma \ref{eeeeee}, and Proposition \ref{prop2}, and get the $\epsilon$-regularity criteria.

Theorem \ref{th3} is deduced from Theorem \ref{mainthm} by using the standard argument in the geometric measure theory, which is explained for example in \cite{CKN_82}.

\section{6D stationary case}\label{6d}

For the 6D stationary case, the proof generally follows a similar argument for the 4D time-dependent case, and in fact slightly simpler. We give the proof briefly.
\subsection{Notation and Settings}
                                    \label{6ds1}
In this section, we introduce the notation which are used throughout Section \ref{6d}. Let $\Omega$ be a domain in $\bR^6$. Denote $L_{p}(\Omega;\mathbb{R}^d)$ and $W_p^k(\Omega;\mathbb{R}^d)$ to be the usual Lebesgue and Sobolev spaces of functions from $\Omega$  to $\mathbb{R}^d$.

We shall use the following notation:
\begin{align*}
&B({\hat x},r)=\{x\in\mathbb{R}^6\,|\,|x-{\hat x}|<r\},\ \ B(r)=B(0,r),\ \ B=B(1);\\
&B^{+}({\hat x},r)=\{ x \in B({\hat x},r)\, |\,x=(x',x_6),\,x_6>\hat x_{6} \},\, \,\\&B^{+}(r)=B^{+}(0,r),\ \ B^{+}=B^{+}(1),\ \ \Omega({\hat x},r)=B({\hat x},r)\cap \Omega;\\
& S^+({\hat x},r)=\{x\in \bR^6\,|\,|x-{\hat x}|=r,\,x=(x',x_6),\,x_6>\hat x_{6} \}.
\end{align*}
We also denote mean values of summable functions as follows:
\begin{align*}
[u]_{{\hat x},r}
=\dfrac{1}{|\Omega({\hat x},r)|}\int_{\Omega({\hat x},r)}u(x)\,dx.
\end{align*}

We introduce the following quantities:
\begin{align*}
A(r)&=A(r,{\hat x})=\dfrac{1}{r^4}\int_{\Omega({\hat x},r)}|u|^2\,dx,\\
E(r)&=E(r,{\hat x})=\dfrac{1}{r^2}\int_{\Omega({\hat x},r)}|\nabla u|^2\,dx,\\
C(r)&=C(r,{\hat x})=\dfrac{1}{r^3}\int_{\Omega({\hat x},r)}|u|^3\,dx,\\
D(r)&=D(r,{\hat x})=\dfrac{1}{r^3}\int_{B^{+}({\hat x},r)}|p-[p]_{{\hat x},r}|^{3/2}\,dx,\\
G(r)&=G(r,{\hat x})=r^4\Big[\int_{\Omega({\hat x},r)}|f|^6\,dx\Big]^{1/3}.
\end{align*}
Notice that all these quantities are invariant under the natural scaling:
$$
u_\lambda(x,t)=\lambda u(\lambda x), \quad
p_\lambda(x,t)=\lambda^2 p(\lambda x),\quad
f_\lambda(x,t)=\lambda^3 f(\lambda x).
$$
We shall estimate them in Section \ref{s26}. 
\subsection{Proof}\label{s26}
%


Now we prove the main theorems in three steps.
\subsubsection{Step 1.}First, we control the quantities $A$, $C$, and $D$ in a smaller ball by their values in a larger ball under the assumption that $E$ is sufficiently small. 

\begin{lemma}
Let $\gamma \in (0,1)$, $\rho >0$ be constants and ${\hat x} \in \partial \Omega$. Suppose that ${\hat x}\in \partial\Omega$ and $\Omega({\hat x},\rho)=B^{+}({\hat x},\rho)$. Then we have
\begin{equation*}
\label{C_est6}
C(\gamma \rho) \leq N\big[\gamma^{-3}E^{3/2}(\rho)+\gamma^{-6}A^{3/4}(\rho)E^{3/4}(\rho)+\gamma^3 C(\rho)\big],
\end{equation*}
where $N$ is a constant independent of $\gamma$, $\rho$, and ${\hat x}$.
\end{lemma}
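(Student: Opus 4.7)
The plan is to adapt the argument of Lemma~\ref{lem2.5} (the analogous 4D result) to the 6D stationary setting; the proof is structurally simpler here because there is no time integration to manage. By the scale invariance of $A$, $E$, $C$, $D$, I would first reduce to the case $\rho = 1$ and $\hat x = 0$, so that $\Omega(0,1) = B^+$. It then suffices to establish
$C(\gamma) \leq N\bigl[\gamma^{-3}E^{3/2}(1) + \gamma^{-6}A^{3/4}(1)E^{3/4}(1) + \gamma^3 C(1)\bigr]$
uniformly for $\gamma\in(0,1)$.

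Next, I would decompose $u = v + \bar u$, where $\bar u := [u]_{0,1}$ is the spatial average of $u$ over $B^+$ and $v := u - \bar u$ is the zero-mean oscillation. Using $|u|^3 \leq N(|v|^3 + |\bar u|^3)$, the constant piece produces the $\gamma^3 C(1)$ contribution: H\"older's inequality yields $|\bar u|^3 \leq N|B^+|^{-1}\int_{B^+}|u|^3 \leq NC(1)$, so $|B^+(\gamma)|\,|\bar u|^3 \leq N\gamma^6 C(1)$, which after dividing by $\gamma^3$ gives the desired term.

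For the oscillation contribution I would invoke the 6D Sobolev--Poincar\'e inequality
$\int_{B^+(r)}|w|^3 \leq N\bigl[(\int_{B^+(r)}|\nabla w|^2)^{3/2} + r^{-3}(\int_{B^+(r)}|w|^2)^{3/2}\bigr],$
obtained by cubing the critical embedding $W^{1,2}(B^+(r)) \hookrightarrow L^3(B^+(r))$ in dimension six (since $2^* = 3$). Applied to $v$ on $B^+(\gamma)$, the gradient term is dominated by $(\int_{B^+}|\nabla u|^2)^{3/2} = E^{3/2}(1)$, which after division by $\gamma^3$ gives the $\gamma^{-3}E^{3/2}$ contribution immediately.

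The only genuinely non-mechanical step is to extract a mixed $A^{3/4}E^{3/4}$ factor from the remaining lower-order term $\gamma^{-3}(\int_{B^+(\gamma)}|v|^2)^{3/2}$, rather than the cruder $A^{3/2}$. My plan is to write $(\int_{B^+}|v|^2)^{3/2} = (\int_{B^+}|v|^2)^{3/4}(\int_{B^+}|v|^2)^{3/4}$ and bound the two factors by different inequalities: one by the trivial estimate $\int_{B^+}|v|^2 \leq \int_{B^+}|u|^2 = A(1)$ (using $\int_{B^+} v = 0$, so $\int|u|^2 = \int|v|^2 + |B^+||\bar u|^2$), and the other by the Poincar\'e inequality $\int_{B^+}|v|^2 \leq N\int_{B^+}|\nabla u|^2 = NE(1)$ (valid because $v$ has zero spatial mean on $B^+$). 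This geometric-mean split produces $N A^{3/4}(1) E^{3/4}(1)$, hence the $\gamma^{-6}A^{3/4}E^{3/4}$ term. Assembling the three contributions and undoing the scaling then yields the stated inequality.
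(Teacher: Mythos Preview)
Your argument is correct and follows essentially the same route as the paper, which simply defers to Lemma~3.1 of \cite{Dong_12} (the interior 6D estimate) adapted to half balls. The decomposition $u=(u-[u]_{0,1})+[u]_{0,1}$, the critical Sobolev embedding $W^{1,2}\hookrightarrow L^3$ in dimension six applied on $B^+(\gamma)$, and the geometric-mean split of $\bigl(\int_{B^+}|v|^2\bigr)^{3/2}$ via $\int|v|^2\le A(1)$ and the zero-mean Poincar\'e inequality $\int|v|^2\le NE(1)$ are exactly the ingredients used there; the stationary setting indeed makes the computation simpler than in Lemma~\ref{lem2.5} since $\bar u$ is a genuine constant.
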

\begin{proof}
Similar to Lemma \ref{lem2.5}, the proof follows that of Lemma 3.1 in \cite{Dong_12}.
\end{proof}

\begin{lemma}
                \label{D_est_lemma6}
Let $\gamma \in (0,1/4]$ and $\rho >0$ be constants. Suppose that ${\hat x}\in \partial\Omega$ and $\Omega({\hat x},\rho)=B^{+}({\hat x},\rho)$. Then 
 we have
\begin{equation}
\label{F_est6}
D(\gamma \rho) \leq N\big[\gamma^{-3}E^{3/4}(\rho)C^{1/2}(\rho)+\gamma^{7/2}D(\rho)
+\gamma^{7/2}E^{3/4}(\rho)+\gamma^{-3}G^{3/4}(\rho)\big],
\end{equation}
where $N$ is a constant independent of $\gamma$, $\rho$, and ${\hat x}$.
\end{lemma}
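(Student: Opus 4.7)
The proof runs parallel to that of Lemma \ref{D_est_lemma} from the 4D time-dependent case, with exponents adjusted to the 6D stationary regime. By translation and the scaling invariance of all the quantities, we may assume ${\hat x}=0$ and $\rho=1$. Fix a smooth intermediate domain $\tilde{B}$ with $B^{+}(1/2)\subset \tilde{B}\subset B^{+}$, set $\tilde{f}=-u\cdot\nabla u$, and observe via H\"older's inequality (with exponents $3$ and $2$, since $\tfrac{1}{3}+\tfrac{1}{2}=\tfrac{5}{6}$) that
\[
\|\tilde f\|_{L_{6/5}(B^{+})}\le \|u\|_{L_{3}(B^{+})}\|\nabla u\|_{L_{2}(B^{+})},\qquad \|f\|_{L_{6/5}(B^{+})}\le N\|f\|_{L_{6}(B^{+})}.
\]

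Apply the 6D stationary analog of Lemma \ref{lest_1} on $\tilde{B}$ to produce $(v,p_{1})$ solving the steady Stokes system with forcing $\tilde f+f$, zero Dirichlet data on $\partial\tilde{B}$, and $[p_{1}]_{\tilde{B}}=0$, satisfying
\[
\|v\|_{W^{2}_{6/5}(\tilde{B})}+\|p_{1}\|_{W^{1}_{6/5}(\tilde{B})}\le N\bigl(\|u\|_{L_{3}}\|\nabla u\|_{L_{2}}+\|f\|_{L_{6}}\bigr).
\]
Setting $w=u-v$ and $p_{2}=p-p_{1}-[p]_{0,1/2}$, the pair $(w,p_{2})$ satisfies the homogeneous Stokes system on $\tilde{B}$ and vanishes on the flat portion of $\partial\tilde{B}$. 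The 6D stationary analog of Lemma \ref{lest_2}, applied with a sufficiently large exponent (e.g.\ $s=9$), then yields
\[
\|\nabla p_{2}\|_{L_{9}(B^{+}(1/4))}\le N\bigl(\|w\|_{W^{1}_{6/5}(B^{+}(1/2))}+\|p_{2}\|_{L_{6/5}(B^{+}(1/2))}\bigr),
\]
and the right-hand side is controlled, via the triangle inequality and the previous Stokes bound, in terms of $\|u\|_{L_{3}}\|\nabla u\|_{L_{2}}$, $\|f\|_{L_{6}}$, $\|p-[p]_{0,1}\|_{L_{3/2}}$, and lower-order velocity norms.

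To close the estimate, split $p-[p]_{0,\gamma}$ on $B^{+}(\gamma)$ into the $p_{1}$ and $p_{2}$ pieces (up to constants). For the $p_{1}$ piece, the scale-invariant Sobolev-Poincar\'e embedding $W^{1}_{6/5}\hookrightarrow L_{3/2}$ in $\bR^{6}$ delivers a contribution of order $N\gamma^{-3}\bigl(E^{3/4}(1)C^{1/2}(1)+G^{3/4}(1)\bigr)$. For the $p_{2}$ piece, Poincar\'e's inequality followed by H\"older gives, for $\gamma\le 1/4$,
\[
\|p_{2}-[p_{2}]_{0,\gamma}\|_{L_{3/2}(B^{+}(\gamma))}\le N\gamma\,|B^{+}(\gamma)|^{5/9}\|\nabla p_{2}\|_{L_{9}(B^{+}(1/4))}\le N\gamma^{13/3}\|\nabla p_{2}\|_{L_{9}(B^{+}(1/4))};
\]
raising to the $3/2$ power and dividing by $\gamma^{3}$ produces the required factor $\gamma^{7/2}$.

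The main technical point is to verify that after substituting the $L_{9}$ bound for $\nabla p_{2}$, the $\gamma^{7/2}$-coefficient collapses precisely to $D(1)+E^{3/4}(1)$, rather than leaving stray $C^{1/2}$ or $A^{3/4}$ factors as in the 4D analog. This is resolved by the Poincar\'e-Sobolev inequality $\|u\|_{L_{3}(B^{+}(1))}\le N\|\nabla u\|_{L_{2}(B^{+}(1))}$ on $\bR^{6}_{+}$, valid because $u$ vanishes on the flat part of $\partial\Omega$ and $W^{1}_{2}\hookrightarrow L_{3}$ on $\bR^{6}$; this yields $C^{1/2}(1)\le NE^{3/4}(1)$ and $A(1)\le NE(1)$, which absorb the extraneous velocity norms into powers of $E$ already appearing in the target bound, completing the proof of \eqref{F_est6}.
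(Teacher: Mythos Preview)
Your proof is correct and follows essentially the same approach as the paper: the same Stokes decomposition $(v,p_{1})$ on an intermediate domain with $L_{6/5}$ data, the same local higher-integrability estimate $\|\nabla p_{2}\|_{L_{9}(B^{+}(1/4))}$ for the homogeneous piece, and the same H\"older/Sobolev--Poincar\'e bookkeeping that produces the $\gamma^{-3}$ and $\gamma^{7/2}$ factors. Your final clean-up via the boundary Poincar\'e--Sobolev inequality $A^{3/4}(1)\le NE^{3/4}(1)$ (and the analogous absorption of $C^{1/2}(1)$) is exactly what the paper does in its last line.
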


\begin{proof}
As in the proof of Lemma \ref{D_est_lemma}, we may assume that ${\hat x}=0$ and $\rho=1$. We choose and fix a domain $\tilde{B} \subset \mathbb{R}^{6}$ with smooth boundary so that
\begin{equation*}
B^{+}(1/2) \subset \tilde{B} \subset B^{+}.
\end{equation*}

Next, we decompose $p$ in a way similar to what we did in the proof of Lemma \ref{D_est_lemma}. Let $v$ and $p_1$ be a unique solution to the following initial boundary value problem:
$$
\left\{\begin{aligned}
-\Delta v+\nabla p_1 &= \tilde{f}+f, \quad\text{in}\quad \tilde{B},\\
\nabla \cdot v&=0\quad\text{in}\quad \tilde{B},\\
[p_1]_{\tilde{B}}&=0,\\
v&=0 \quad\text{on}\quad \partial \tilde{B},
\end{aligned}
\right.
$$
where $\tilde{f}=-u\cdot\nabla u$.
We set $w=u-v$ and $p_2=p-p_1-[p]_{0,1/2}$. Then $w$ and $p_2$ satisfy
$$
\left\{
\begin{aligned}
-\Delta w +\nabla p_2 &=0\quad\text{in}\quad \tilde{B},\\
\nabla \cdot w&=0\quad\text{in}\quad \tilde{B},\\
w&=0 \quad\text{on}\quad \partial \tilde{B} \cap \partial\Omega.
\end{aligned}
\right.
$$
Similar to \eqref{v1p1} and \eqref{p2}, we obtain
\begin{align}
\label{v1p16}
\nonumber &\|v\|_{L_{6/5}(\tilde{B})}+\|\nabla v\|_{L_{6/5}(\tilde{B})}+\|p_1\|_{L_{6/5}(\tilde{B})}+\|\nabla p_1\|_{L_{6/5}(\tilde{B})}\\
\nonumber&\,\leq N\|\tilde{f}\|_{L_{6/5}(\tilde{B})}+N\|f\|_{L_{6/5}(\tilde{B})}
\\&\,\leq N\|\nabla u\|_{L_2(B^{+})}\|u\|_{L_3(B^{+})}
+N\|f\|_{L_{6/5}(\tilde{B})},
\end{align}
and
\begin{align*}
&\|\nabla p_2\|_{L_9(B^{+}(1/4))}\\
&\,\leq N[\|w\|_{L_{6/5}(B^{+}(1/2))}+\|\nabla w\|_{L_{6/5}(B^{+}(1/2))}+\|p_2\|_{L_{6/5}(B^{+}(1/2))}]\\
&\,\leq N\big[\|u\|_{L_{6/5}(B^{+}(1/2))}+\|\nabla u\|_{L_{6/5}(B^{+}(1/2))}\\
\nonumber&\,\quad+\|p-[p]_{0,1/2}\|_{L_{6/5}(B^{+}(1/2))}
+\|v\|_{L_{6/5}(B^{+}(1/2))}\\
\nonumber&\,\quad+\|\nabla v\|_{L_{6/5}(B^{+}(1/2))}+\|p_1\|_{L_{6/5}(B^{+}(1/2))}\big].
\end{align*}
Here we took $s=9$, which can be replaced by any sufficiently large number.
Then using \eqref{v1p16}, we have
\begin{align}
\label{p26}
\nonumber &\|\nabla p_2\|_{L_9(B^{+}(1/4))} \\
\nonumber &\,\leq N\big[\|u\|_{L_{6/5}(B^{+}(1/2))}+
\|\nabla u\|_{L_{6/5}(B^{+}(1/2))}+\|p-[p]_{0,1/2}\|_{L_{6/5}(B^{+}(1/2))}\\
&\,\quad+\|\nabla u\|_{L_2(B^{+})}\|u\|_{L_3(B^{+})}
+\|f\|_{L_{6/5}(\tilde{B})}\big].
\end{align}
Recall that $0 < \gamma \leq 1 /4$. Then by the Sobolev--Poincar\'e inequality, the triangle inequality, \eqref{v1p16}, \eqref{p26}, and H\"{o}lder's inequality, we bound $D(\gamma)$ by
\begin{align*}
\nonumber &\dfrac{N}{\gamma^3}\Big(\int_{B^{+}(\gamma)} |\nabla p_1|^{6/5}\,dx\Big)^{5/4}+\dfrac{N}{\gamma^3}\Big(\int_{B^{+}(\gamma)} |\nabla p_2|^{6/5}\,dx\Big)^{5/4}\\
\nonumber&\,\leq N\gamma^{-3}\big[E^{3/4}(1)C^{1/2}(1)\big] +N\gamma^{-3}G^{3/4}+ \gamma^{7/2}\Big(\int_{B^{+}(\gamma)} |\nabla p_2|^9\,dx\Big)^{1/6}
\\&\,\leq N\big[\gamma^{-3}E^{3/4}(1)C^{1/2}(1)+\gamma^{7/2}(D(1)+A^{3/4}(1)
+E^{3/4}(1))+\gamma^{-3}G^{3/4}(1)\big].
\end{align*}
By the boundary Poincar\'e inequality,
$$
A ^{3/4}(1)\le N E^{3/4}(1).
$$
The lemma is proved.
\end{proof}

We omit the proofs of Lemma \ref{ae6} and Proposition \ref{prop16} below as well as Lemma \ref{eeeeee6} at the beginning of Section \ref{sec3.2.2}. The reader is referred to \cite{Dong_12} or Appendix for  details.
\begin{lemma}
\label{ae6}
Let $\gamma \in (0,1/2]$ and $\rho >0$ be constants. Suppose that ${\hat x}\in \partial\Omega$ and $\Omega({\hat x},\rho)=B^{+}({\hat x},\rho)$. Then we have
\begin{equation*}
A(\gamma \rho)+E(\gamma \rho) \leq N \gamma^{-2}\big[C^{2/3}(\rho)+C(\rho)+C^{1/3}(\rho)D^{2/3}(\rho)+G(\rho)\big].
\end{equation*}
In particular, when $\gamma=1/2$ we have
\begin{equation}
\label{AE_est_26}
A(\rho/2)+E(\rho/2) \leq N \big[C^{2/3}(\rho)+C(\rho)+C^{1/3}(\rho)D^{2/3}(\rho)+G(\rho)\big].
\end{equation}
\end{lemma}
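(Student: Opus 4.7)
The plan is to mimic the proof of Lemma 3.2 in \cite{Dong_12} (the interior analog) and of Lemma \ref{aelemma} above, with the only substantive change being that the test function in the local energy inequality is allowed to be nonzero on the flat part of $\partial\Omega$. After translating ${\hat x}$ to the origin and using the natural scaling to set $\rho=1$, it suffices to establish
$$A(\gamma)+E(\gamma)\le N\gamma^{-2}\bigl[C^{2/3}(1)+C(1)+C^{1/3}(1)D^{2/3}(1)+G(1)\bigr].$$

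The bound on $A(\gamma)$ requires nothing beyond H\"older's inequality. Since $|B^+(\gamma)|^{1/3}\le N\gamma^2$,
$$\int_{B^+(\gamma)}|u|^2\,dx\le N\gamma^2\Bigl(\int_{B^+(\gamma)}|u|^3\,dx\Bigr)^{2/3}\le N\gamma^2 C^{2/3}(1),$$
and dividing by $\gamma^4$ gives $A(\gamma)\le N\gamma^{-2}C^{2/3}(1)$.

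For $E(\gamma)$, I would select a non-negative cutoff $\psi\in C^\infty(\bar B^+)$ with $\psi\equiv 1$ on $B^+(\gamma)$, $\operatorname{supp}\psi\subset B^+(1)$, and $|\nabla\psi|+|\Delta\psi|\le N$ (valid since $\gamma\le 1/2$ leaves an annular gap of width at least $1/2$); crucially $\psi$ need \emph{not} vanish on the flat part of $\partial\Omega$, and this is precisely what the generalized stationary energy inequality in the formulation of suitable weak solution permits. Because $\nabla\cdot u=0$ and $u=0$ on $\partial\Omega$, integrating by parts shows $\int pu\cdot\nabla\psi\,dx=\int(p-[p]_{0,1})u\cdot\nabla\psi\,dx$ with no boundary contribution, so the energy inequality yields
$$2\int|\nabla u|^2\psi\,dx\le\int\bigl(|u|^2|\Delta\psi|+(|u|^2+2|p-[p]_{0,1}|)|u||\nabla\psi|+2|f||u|\psi\bigr)\,dx.$$
Each term on the right is then bounded by H\"older: $\int_{B^+(1)}|u|^2\le NC^{2/3}(1)$ as above; $\int_{B^+(1)}|u|^3=C(1)$; $\int_{B^+(1)}|p-[p]_{0,1}||u|\le ND^{2/3}(1)C^{1/3}(1)$ by H\"older with exponents $3/2$ and $3$; and
$$\int_{B^+(1)}|f||u|\,dx\le\|f\|_{L_6}\|u\|_{L_2}|B^+(1)|^{1/3}\le NG^{1/2}(1)C^{1/3}(1)\le N\bigl[G(1)+C^{2/3}(1)\bigr]$$
using $1/6+1/2+1/3=1$ and Young's inequality. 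Summing these estimates bounds $\int_{B^+(\gamma)}|\nabla u|^2\,dx$ by $N\bigl[C^{2/3}(1)+C(1)+C^{1/3}(1)D^{2/3}(1)+G(1)\bigr]$, and dividing by $\gamma^2$ gives the claimed bound on $E(\gamma)$.

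I do not anticipate a real obstacle: the argument is purely a H\"older bookkeeping exercise, and the only modification relative to the interior proof in \cite{Dong_12} is the use of a cutoff nonvanishing on $\partial\Omega$, which is legitimate precisely because the Dirichlet condition $u|_{\partial\Omega}=0$ combined with $\nabla\cdot u=0$ suppresses the would-be boundary integral in the pressure-subtraction step. The special case $\gamma=1/2$ is a direct specialization.
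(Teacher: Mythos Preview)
Your proof is correct and takes essentially the same approach as the paper: the paper refers to the Appendix (proof of Lemma \ref{aelemma}) and to \cite{Dong_12}, and your argument reproduces that template---cutoff in the generalized local energy inequality, subtraction of $[p]_{0,1}$ justified by $u|_{\partial\Omega}=0$ and $\nabla\cdot u=0$, then H\"older/Young on each term. The only (necessary) adaptation you make for the stationary case is bounding $A(\gamma)$ directly by H\"older rather than through the energy inequality, since the stationary energy inequality controls only $\int|\nabla u|^2\psi$; this is standard and exactly what one finds in \cite{Dong_12}.
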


As a conclusion, we obtain
\begin{proposition}
\label{prop16}
For any $\epsilon_0 >0$, there exists $\epsilon_1 >0$ small such that the following is true. For any ${\hat x} \in \partial\Omega $ satisfying $\Omega({\hat x},R)=B^{+}({\hat x},R)$ for some small $R$ and
\begin{equation*}
\limsup_{r \searrow 0} E(r)\leq \epsilon_1,
\end{equation*}
we can find $\rho_0$ sufficiently small such that
\begin{equation*}
A(\rho_0)+E(\rho_0)+C(\rho_0)+D(\rho_0)\leq \epsilon_0.
\end{equation*}

\end{proposition}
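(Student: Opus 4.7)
The plan is to run two successive geometric iterations of the estimates from the two preceding lemmas in Section \ref{s26}, exploiting the boundary Poincar\'e inequality to replace $A$ by $E$ and using $f\in L_6$ to kill the forcing contribution. I would first fix a contraction parameter $\gamma\in(0,1/4)$ small enough that $N\gamma^{3}\le 1/2$ and $N\gamma^{7/2}\le 1/2$, where $N$ denotes the larger of the constants appearing in the $C$ estimate (the unlabeled lemma preceding Lemma \ref{D_est_lemma6}) and in \eqref{F_est6}. Once $\gamma$ is frozen, the assumption $\limsup_{r\searrow 0}E(r)\le\epsilon_1$ and the absolute continuity of $\int|f|^6$ let me choose $R_1>0$ with $\Omega(\hat x,R_1)=B^+(\hat x,R_1)$ such that $E(r)\le 2\epsilon_1$ and $G(r)\le\epsilon_1$ for every $r\in(0,R_1)$.

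The first iteration handles $C$. On the flat half-ball the boundary Poincar\'e inequality gives $A(r)\le NE(r)$, so the $C$ estimate collapses to
\begin{equation*}
C(\gamma\rho)\le N\gamma^{-6}E^{3/2}(\rho)+N\gamma^{3}C(\rho).
\end{equation*}
Iterating at the scales $\rho_j:=\gamma^{j}\rho_1$, starting from some $\rho_1<R_1$ for which $C(\rho_1)<\infty$ (guaranteed by the Sobolev embedding $W^1_2\hookrightarrow L_3$ in six dimensions), the contraction factor $N\gamma^{3}\le 1/2$ yields a geometric bound of the form
\begin{equation*}
C(\rho_k)\le 2N\gamma^{-6}(2\epsilon_1)^{3/2}+2^{-k}C(\rho_1),
\end{equation*}
which is $O(\epsilon_1^{3/2})$ once $k$ is chosen large enough to absorb the initial datum.

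Next I would iterate \eqref{F_est6}. At scales $\rho\le\rho_k$, the smallness of $C$ together with $E\le 2\epsilon_1$ and $G\le\epsilon_1$ makes every non-$D$ term on the right-hand side of \eqref{F_est6} small, leaving a linear-in-$D$ contraction with rate $N\gamma^{7/2}\le 1/2$. Further iteration therefore drives $D$ to an arbitrarily small value at a sufficiently deep scale $\rho_0:=\gamma^{k+\ell}\rho_1$. The final step is to invoke \eqref{AE_est_26} at scale $2\rho_0$: the smallness of $C(2\rho_0)$, $D(2\rho_0)$, and $G(2\rho_0)$ forces $A(\rho_0)+E(\rho_0)\le\epsilon_0$ as well, which together with the smallness already secured for $C(\rho_0)$ and $D(\rho_0)$ yields the conclusion.

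The only delicate bookkeeping is the sequencing: the right-hand side of the $D$ estimate contains $C^{1/2}$ but the $C$ estimate contains neither $D$ nor $F$, so $C$ must be driven down first and only afterwards does the $D$ iteration close. Both contraction rates $N\gamma^{3}$ and $N\gamma^{7/2}$ have to be fixed \emph{before} any smallness is imposed on $\epsilon_1$, or the choice of $\gamma$ would circularly depend on $\epsilon_1$; this is automatic here because neither contraction constant involves any of the data.
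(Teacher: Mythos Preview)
Your argument is correct, and it is organized differently from the paper's. The paper (via the Appendix proof of Proposition \ref{prop1}, to which the 6D case is referred) bundles all four quantities into a single inequality of the form
\[
A(\gamma\rho)+E(\gamma\rho)+C(\gamma\rho)+D(\gamma\rho)\le N\gamma^{2/3}\big[A(\rho)+E(\rho)+C(\rho)+D(\rho)\big]+N\gamma^{2/3}+N\gamma^{-100}\big(E(\rho)+E^3(\rho)+G(\rho)\big),
\]
obtained by feeding \eqref{AE_est_26} into the $C$ and $D$ estimates and applying Young's inequality, and then runs one contraction iteration. You instead exploit the triangular structure of the estimates: since the $C$ estimate involves only $A$, $E$, $C$ and the boundary Poincar\'e inequality $A\le NE$ is available in the stationary half-ball setting, you can drive $C$ down first by a pure $C$-iteration, and only afterward run the $D$-iteration from \eqref{F_est6}, in which the now-small $C$ renders every non-$D$ term harmless. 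Both approaches fix $\gamma$ before choosing $\epsilon_1$, and both allow the terminal scale $\rho_0$ to depend on the solution through the initial values $C(\rho_1)$, $D(\rho_{k_0})$. Your route is a bit more transparent about why the argument closes and avoids the somewhat opaque Young-inequality bookkeeping needed to produce the combined contraction; the paper's route is more uniform and transfers verbatim to the 4D time-dependent setting, where $A\le NE$ fails and a decoupled $C$-iteration is not available. Your final invocation of \eqref{AE_est_26} at scale $2\rho_0$ is in fact unnecessary: since $E(\rho_0)\le 2\epsilon_1$ already and $A(\rho_0)\le NE(\rho_0)$ by boundary Poincar\'e, the bound on $A(\rho_0)+E(\rho_0)$ is immediate.
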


\subsubsection{Step 2.}
                        \label{sec3.2.2}
In the second step, first we estimate the values of $A$, $E$, $C$, and $D$ in a smaller ball by the values of themselves in a larger ball.
\begin{lemma}
\label{eeeeee6}
Let $\rho >0$ and $\gamma \in (0,1/8]$ be constants. Suppose that ${\hat x}\in \partial\Omega$ and $\Omega({\hat x},\rho)=B^{+}({\hat x},\rho)$. Then we have
\begin{align}
\label{eree6}
&A(\gamma \rho)+E(\gamma \rho)\nonumber\\
&\,\leq N \gamma^2A(\rho)+N\gamma^{-3}\big(C(\rho)+C^{1/3}(\rho)D^{2/3}(\rho)\big)+N\gamma^{-6}G(\rho),
\end{align}
where $N$ is a constant independent of $\rho$, $\gamma$, and ${\hat x}$.
\end{lemma}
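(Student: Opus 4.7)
The proof is the 6D stationary analog of Lemma \ref{eeeeee}, and I will follow the template of \cite{Dong_12}. By translation and the natural scaling invariance I may assume $\hat x = 0$ and $\rho = 1$. The crucial point is that a direct test of the stationary local energy inequality against a bump cutoff supported in $B^+(\gamma)$ would only produce $\gamma^{-2} A(1)$ on the right-hand side, which is useless for the bootstrap in Proposition \ref{prop2}. To extract the favorable $\gamma^2 A(1)$ term, I exploit the sharp linear-Stokes regularity of a harmonic remainder of $u$ near the flat boundary.

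Following the proof of Lemma \ref{D_est_lemma6}, I split $u = v + w$ and $p - [p]_{0, 1/2} = p_1 + p_2$ on a fixed smooth subdomain $\tilde B$ with $B^+(1/2) \subset \tilde B \subset B^+$, where $(v, p_1)$ solves the inhomogeneous Stokes problem on $\tilde B$ with source $-u \cdot \nabla u + f$ and zero Dirichlet data, and $(w, p_2) = (u - v, p - p_1 - [p]_{0, 1/2})$ satisfies the homogeneous Stokes system with $w = 0$ on $\partial \tilde B \cap \partial \Omega$. I then test the stationary local energy inequality against a smooth cutoff $\psi \geq 0$ with $\psi \equiv 1$ on $B^+(\gamma)$ and $\psi$ supported in $B^+(1/2)$, using $\nabla \cdot u = 0$ together with $u|_{\partial \Omega} = 0$ to substitute $p_1 + p_2$ for $p$ in the coupling $\int p\, u \cdot \nabla \psi\, dx$. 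The contribution of $(v, p_1)$ is controlled by the Stokes $L^{6/5}$ estimate \eqref{v1p16}, and combined with H\"older estimates for the cubic convection $|u|^2 u \cdot \nabla \psi$ and the forcing $f \cdot u \psi$, it supplies the $\gamma^{-3}\bigl(C(1) + C^{1/3}(1) D^{2/3}(1)\bigr) + \gamma^{-6} G(1)$ portion of \eqref{eree6}.

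The $\gamma^2 A(1)$ term comes from the harmonic-Stokes part $(w, p_2)$. Because $w$ solves a homogeneous Stokes system with smooth coefficients on $\tilde B$ and vanishes on the flat portion of $\partial \tilde B$, standard flat-boundary regularity (Lemma \ref{lest_2} in its stationary analog, or equivalently reflection across $\{x_6 = 0\}$) yields the interior bound $\|\nabla w\|_{L^\infty(B^+(1/4))} \leq C\bigl(\|w\|_{L^2(B^+(1/2))} + \|p_2\|_{L^2(B^+(1/2))}\bigr)$, which via the triangle inequality is bounded by a combination of $A^{1/2}(1)$ and the $(v, p_1)$-norms already accounted for. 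Since $w = 0$ on $\{x_6 = 0\}$, Taylor's theorem gives $|w(x)| \leq C x_6 \|\nabla w\|_{L^\infty}$, so
$$
\int_{B^+(\gamma)} |w|^2 \, dx \leq C \gamma^8 \|\nabla w\|_{L^\infty}^2, \qquad \int_{B^+(\gamma)} |\nabla w|^2 \, dx \leq C \gamma^6 \|\nabla w\|_{L^\infty}^2.
$$
Converting via the definitions of $A$ and $E$, the $w$ piece contributes at most $C \gamma^4 A(1)$ to $A(\gamma) + E(\gamma)$ (easily absorbed in $\gamma^2 A(1)$), plus cross-terms that are absorbed into the $C$, $D$, $G$ contributions via Young's inequality. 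Finally, the boundary Poincar\'e inequality $A(\gamma) \leq C E(\gamma)$, valid because $u$ vanishes on the flat boundary, handles the $A(\gamma)$ factor on the left-hand side and yields \eqref{eree6}.

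The main obstacle is obtaining the favorable $\gamma^2$ coefficient in front of $A(1)$ rather than the $\gamma^{-2}$ produced by a naive bump cutoff. This improvement relies on the linear Stokes decomposition and, crucially, on the pointwise vanishing of the harmonic remainder $w$ on the flat boundary, which upgrades the $L^2$ bound on small half-balls by two extra powers of $\gamma$.
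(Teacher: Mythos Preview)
Your approach differs from the paper's and, as written, has a genuine gap. The paper (following \cite{Dong_12} and the Appendix proof of Lemma \ref{eeeeee}) obtains the crucial $\gamma^2 A(\rho)$ coefficient by testing the local energy inequality not with a bump cutoff but with $\psi=\Gamma\phi$, where $\Gamma$ is (a regularization of) the fundamental solution $|x-\hat x|^{-4}$ of the Laplacian in $\bR^6$ and $\phi$ is a standard cutoff equal to $1$ on $B(\hat x,1/2)$. The point is that $\Delta\Gamma=0$ away from $\hat x$, so $\Delta(\Gamma\phi)$ is supported where $\nabla\phi\neq 0$, i.e.\ in the annulus $B^+(1)\setminus B^+(1/2)$, where $|\Delta\psi|\le N$. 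Thus $\int |u|^2\Delta\psi\le N A(1)$, while on $B^+(\gamma)$ one has $\psi\ge c\gamma^{-4}$, so the left side dominates $\gamma^{-2}E(\gamma)$. Dividing through yields $E(\gamma)\le N\gamma^2 A(1)+\cdots$, and $A(\gamma)\le N E(\gamma)$ by the boundary Poincar\'e inequality. The gradient bound $|\nabla\psi|\le N\gamma^{-5}$ produces the $\gamma^{-3}(C+C^{1/3}D^{2/3})$ term.

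Your proposal instead mixes two mechanisms that do not fit together. You test the energy inequality with a plain bump cutoff, but then the term $\int|u|^2\Delta\psi$ is still present with $|\Delta\psi|=O(1)$ and yields $\gamma^{-2}A(1)$ after normalization---decomposing the \emph{pressure} as $p_1+p_2$ does nothing to this term, which involves only $u$. Separately you estimate $\int_{B^+(\gamma)}|w|^2$ and $\int_{B^+(\gamma)}|\nabla w|^2$ directly via the $L^\infty$ bound on $\nabla w$; but this is a direct bound on the $w$-piece of $A(\gamma)+E(\gamma)$, not a contribution coming from the energy inequality, so you must then also bound the $v$-piece directly. Here the argument breaks: from the $L_{6/5}$ Stokes estimate you only get $\nabla v\in L^{3/2}$ (Sobolev in $\bR^6$), which does not control $\gamma^{-2}\int_{B^+(\gamma)}|\nabla v|^2$. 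Pushing the source into $L^{3/2}$ would require $u\in L^6$, which is unavailable. A secondary issue is that the bound $\|\nabla w\|_{L^\infty}\le C(\|w\|+\|p_2\|)$ necessarily drags in $\|p-[p]_{0,1/2}\|$, hence a power of $D(1)$ (and also $E(1)$ via $\|\nabla u\|$), producing terms like $\gamma^4 D^{4/3}(1)$ and $\gamma^4 E(1)$ that are not in \eqref{eree6}. These may be harmless for the application in Proposition \ref{prop2}, but they do not give the lemma as stated. The clean route is the fundamental-solution test function.
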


In the next proposition, we study the decay property of $A$, $C$, $E$, and $D$ as the radius $\rho$ goes to zero. It is analogous to Proposition \ref{prop2}.
\begin{proposition}
There exists $\epsilon_0>0$ satisfying the following property. Suppose that for some ${\hat x}\in\partial\Omega $ and $\rho_0 >0$, $\Omega({\hat x},\rho_0)=B^{+}({\hat x},\rho_0)$ and
\begin{equation}
C(\rho_0)+D(\rho_0)+G(\rho_0)\leq \epsilon_0.
\label{cond_cdf6}
\end{equation}
Then we can find $N>0$ and $\alpha_0 \in (0,1)$ such that for any $\rho \in (0,\rho_0/4)$ and $x^* \in B({\hat x},\rho_0/4)\cap \partial\Omega$, the following inequality holds uniformly
\begin{equation}
A(\rho,x^*)+C^{2/3}(\rho,x^*)+E(\rho,x^*)+ D(\rho,x^*)\leq N \rho ^{\alpha_0},
\label{cdf6}
\end{equation}
where $N$ is a positive constant independent of $\rho$ and $x^*$.
\end{proposition}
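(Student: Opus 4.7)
The plan is to run essentially the same iteration as in the proof of Proposition \ref{prop2}, with the 4D estimates \eqref{AE_est_2}, \eqref{D_est}, and \eqref{eree} replaced by their 6D analogs \eqref{AE_est_26}, \eqref{F_est6}, and \eqref{eree6}. Fix a small parameter $\epsilon'>0$ to be chosen later. By \eqref{AE_est_26} and \eqref{cond_cdf6}, one can pick $\epsilon_0=\epsilon_0(\epsilon')$ so small that $\epsilon_0\le\epsilon'^{4}$ and
\[
A(\rho_0/2)+E(\rho_0/2)+D(\rho_0/2)\le\epsilon'.
\]
Since all the quantities above are invariant under the natural scaling, I may rescale so that $\rho_0=\epsilon'$. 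The six-dimensional Sobolev--Poincar\'e embedding $W^1_2\hookrightarrow L^3$ gives the interpolation
\[
C(\rho)\le N\bigl(A(\rho)+E(\rho)\bigr)^{3/2},
\]
so $C(\rho_0/2)\le N\epsilon'^{3/2}$. For any $x^*\in B(\hat x,\rho_0/4)\cap\partial\Omega$ the inclusion $B^+(x^*,\rho_0/4)\subset B^+(\hat x,\rho_0/2)$ shows that the same smallness is inherited at radius $\rho_1:=\rho_0/4$ centered at~$x^*$.

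I then prove by induction on $k\ge 1$ that, with $\rho_k=\rho_1^{(1+\beta)^k}$ and $\beta>0$ sufficiently small (e.g.\ $\beta=1/200$),
\[
A(\rho_k,x^*)+E(\rho_k,x^*)+C^{2/3}(\rho_k,x^*)\le\rho_k^{1/10},\qquad D(\rho_k,x^*)\le\rho_k^{1/10}.
\]
The base cases $k=1,2,3$ are immediate after shrinking $\epsilon'$. For the inductive step on $A+E$, apply \eqref{eree6} with $\gamma=\rho_k^{\beta}$ and $\rho=\rho_k$, using the trivial bound $G(\rho)\le\rho^{4}\epsilon'^{-4}\epsilon_0\le\rho^{4}$ coming from the definition of $G$ and the choice of $\epsilon_0$. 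Each of the three resulting exponents of $\rho_k$ strictly exceeds $(1+\beta)/10$, and the corresponding bound on $C^{2/3}(\rho_{k+1})$ follows from the Sobolev--Poincar\'e interpolation stated above.

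The main obstacle, exactly as in the 4D argument, is propagating the decay of $D$. A one-step application of \eqref{F_est6} is not sharp enough because the leading term $\gamma^{-3}E^{3/4}(\rho)C^{1/2}(\rho)$ is only of order $\gamma^{-3}\rho^{3/20}$ under the inductive hypothesis, which fails to beat $1/10$ after the $\gamma^{-3}$ loss. I overcome this by stepping back three iterates: apply \eqref{F_est6} at radius $\rho_{k-2}$ with $\gamma=\rho_{k-2}^{\tilde\beta}$, where $\tilde\beta:=(1+\beta)^{3}-1$, so that $\gamma\rho_{k-2}=\rho_{k+1}$. Substituting the inductive estimates $E(\rho_{k-2})\le\rho_{k-2}^{1/10}$, $C(\rho_{k-2})\le\rho_{k-2}^{3/20}$, $D(\rho_{k-2})\le\rho_{k-2}^{1/10}$, and $G(\rho_{k-2})\le\rho_{k-2}^{4}$, the four resulting exponents of $\rho_{k-2}$ all strictly exceed $(1+\beta)^{3}/10$, which closes the induction after shrinking $\epsilon'$ once more (independently of $k$).

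Finally, for arbitrary $\rho\in(0,\rho_0/4)$ I pick $k$ with $\rho_{k+1}\le\rho<\rho_k$ and use the trivial monotonicity of the quantities in the radius (with an $L^{3/2}$ mean-oscillation argument for $D$ to replace the mean $[p]_{x^*,\rho}$ by $[p]_{x^*,\rho_k}$). This costs a factor no worse than $(\rho_k/\rho_{k+1})^{4}=\rho_k^{-4\beta}$, where the exponent $4$ comes from the $r^{-4}$ in the definition of $A$, which is the worst case among $A$, $E$, $C$, and $D$. Hence
\[
A(\rho)+E(\rho)+C^{2/3}(\rho)+D(\rho)\le N\rho_k^{1/10-4\beta}\le N\rho^{(1/10-4\beta)/(1+\beta)},
\]
and the conclusion follows with $\alpha_0:=(1/10-4\beta)/(1+\beta)\in(0,1)$.
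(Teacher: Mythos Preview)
Your argument is correct and follows the same iteration scheme as the paper. Two small remarks: in 6D the paper only needs to step back to $\rho_{k-1}$ (taking $\gamma=\rho_{k-1}^{2\beta+\beta^2}$) rather than to $\rho_{k-2}$, since \eqref{F_est6} carries the improved exponent $\gamma^{7/2}$ in place of the $\gamma^{9/4}$ from \eqref{D_est}; and the term that actually spoils a single-step application of \eqref{F_est6} is $\gamma^{7/2}E^{3/4}(\rho)$ (which gives exponent $\tfrac72\beta+\tfrac{3}{40}<\tfrac{1+\beta}{10}$), not $\gamma^{-3}E^{3/4}C^{1/2}$, which in fact already clears the threshold.
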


\begin{proof}
Let $\epsilon'>0$ be a small constant to be specified later. Due to \eqref{AE_est_26} and \eqref{cond_cdf6}, one can find $\epsilon_0=\epsilon_0(\epsilon')>0$ sufficiently small such that,
\begin{equation*}
\epsilon_0\le \epsilon'^4,\quad
A(\rho_0/2)+E(\rho_0/2)\leq \epsilon',\quad D(\rho_0/2) \leq \epsilon'.
\end{equation*}
As in the proof of Proposition \ref{prop2}, we may assume that $\rho_0=\epsilon'$.

By the Sobolev embedding theorem, we get
\begin{equation}
\label{hhh6}
C(\rho)\leq N[A(\rho)+E(\rho)]^{3/2},
\end{equation}
which implies
\begin{equation*}
C(\rho_0/2)\leq N\epsilon'^{\frac{3}{2}}.
\end{equation*}

For any $x^* \in B({\hat x},\rho_0/4)\cap \partial\Omega$,
by using
\begin{equation*}
B^{+}(x^*,\rho_0/4)\subset B^{+}({\hat x},\rho_0/2)\subset \Omega,
\end{equation*}
we get
\begin{equation*}
A(\rho_1,x^*)+E(\rho_1,x^*)+C^{2/3}(\rho_1,x^*)+D(\rho_1,x^*) \leq N\epsilon',
\end{equation*}
where $\rho_1=\rho_0/4$.

Now we shall prove inductively that
\begin{equation}
\label{pop6}
A(\rho_k,x^*)+E(\rho_k,x^*)+C^{2/3}(\rho_k,x^*)+D(\rho_k,x^*) \leq \rho_k^{\frac{1}{10}},
\end{equation} where $\rho_k=\rho_1^{(1+\beta)^k}$ and $\beta=\frac{1}{200}$ for $k=1,2,\cdots$.

It is easy to see that \eqref{pop6} holds for $k=1,2$ by choosing $\epsilon'$ sufficiently small.
Now suppose that \eqref{pop6} holds for some $k\ge 2$. Since $\rho_{k+1}=\rho_{k}^{1+\beta}$, by \eqref{eree6} combining with a similar argument in the proof of Proposition \ref{prop2} (cf. \eqref{ind1} to \eqref{ind2}), we get
\begin{align*}
&A(\rho_{k+1})+E(\rho_{k+1}) \\
&\,\leq N \rho_{k}^{2\beta}A(\rho_k)+N \rho_k^{-3\beta}\big(C(\rho_k)+C^{1/3}(\rho_k)D^{2/3}(\rho_k)\big)
+N\rho_k^{-6\beta}G(\rho_k)
\\&\,\leq N \rho_k^{2\beta+\frac{1}{10}}+N\rho_k^{-3\beta+\frac{7}{60}}
+N\rho_k^{-6\beta+4}\\
&\,\leq \rho_{k+1}^{\frac{1}{10}},
\end{align*}
and
\begin{equation*}
C(\rho_{k+1})\leq \rho_{k+1}^{\frac{3}{20}},
\end{equation*}
provided that $\varepsilon'$ is sufficiently small, but independent of $k$.

To bound $D(\rho_{k+1})$, we shall use the estimates which we got in the $k-1$-th step for the same reason we explained in the proof of Proposition \ref{prop2}. By using \eqref{F_est6}  with $\gamma=\rho_{k-1}^{\beta^2+2\beta}$ and $\rho=\rho_{k-1}$, we get
\begin{align*}
D(\rho_{k+1})&\leq N \rho_{k-1}^{-6\beta-3\beta^2}E^{3/4}(\rho_{k-1})C^{1/2}(\rho_{k-1})+ N\rho_{k-1}^{7\beta+7\beta^2/2}D(\rho_{k-1})\\&\quad+N\rho_{k-1}^{7\beta+7\beta^2/2}E^{3/4}(\rho_{k-1})+N \rho_{k-1}^{-6\beta-3\beta^2}G^{3/4}(\rho_{k-1})\\&\leq \rho_{k+1}^{\frac{1}{10}},
\end{align*}
provided that $\varepsilon'$ is sufficiently small, but independent of $k$.

Now for any $\rho \in (0,\rho_0/4)$, we can find a positive integer $k$ such that $\rho_{k+1}\leq \rho <\rho_k$. Therefore,
\begin{align*}
& A(\rho)+E(\rho)+C^{2/3}(\rho)+D(\rho)\\
&\le \rho_k^4 \rho_{k+1}^{-4}\big(A(\rho_k)+E(\rho_k)+C^{2/3}(\rho_k)+D(\rho_k)\big)\\
&\le 2\rho_k^{\frac 1 {10}-4\beta}\leq 2 \rho^{\frac{1}{1+\beta}(\frac 1 {10}-4\beta)}.
\end{align*}
By choosing $\alpha_0=\frac{1}{1+\beta}(\frac 1 {10}-4\beta)$, the lemma is proved.
\end{proof}

\subsubsection{Step 3}

In the final step, we shall use a bootstrap argument to successively improve the decay estimate \eqref{cdf6} and use elliptic regularity to get the decay exponent required by Morrey's Lemma.

First we prove Theorem \ref{th26d}. With the same bootstrap argument used in Section \ref{boot}, we can prove that, under the condition \eqref{cond_cdf6}, for an increasing sequence of real numbers $\{\alpha_k\}$, $\alpha_k \in (0,2)$, and $\alpha_k\rightarrow 2$ as $k\rightarrow +\infty$, the following estimates
\begin{align}
\label{esti6}
&A(\rho,x^*)+E(\rho,x^*)\leq N \rho^{\alpha_k},\quad C(\rho,x^*)\leq N\rho^{3\alpha_k/2},\\
            \label{esti6_D}
&D(\rho,x^*)\leq N\rho^{6\alpha_k/7}
\end{align}
hold uniformly for all $\rho >0$ sufficiently small and $x^* \in B({\hat x},\rho_0/4)\cap \partial \Omega$.
Indeed, the $k=0$ case for \eqref{esti6} and \eqref{esti6_D} was proved in \eqref{cdf6} with a possibly different exponent $\alpha_0$.
Now suppose that \eqref{esti6} and \eqref{esti6_D} hold with the exponent $\alpha_k$. Let $\tilde{\gamma}=\rho^{\mu}$, $\tilde{\rho}=\rho^{1-\mu}$, and $\rho=\tilde{\gamma}\tilde{\rho}$, where $\mu \in (0,1)$ is a constant to be specified. Then we get the following estimate
\begin{equation*}
A(\rho)+E(\rho) \leq N \rho^{2\mu}\rho^{\alpha_k(1-\mu)}+N \rho^{-3\mu}\rho^{\frac{15}{14}\alpha_k(1-\mu)}+N \rho^{4(1-\mu)}\rho^{-6\mu}
\end{equation*}
by using \eqref{eree6} and \eqref{esti6}. Choose $\mu=\dfrac{\alpha_k}{70+\alpha_k}$. Then \eqref{esti6} is proved for $A(\rho)+E(\rho)$ with the exponent
\begin{align*}
\alpha_{k+1}&:=\min\Big\{2\mu+\alpha_k(1-\mu),
\frac{15}{14}\alpha_k(1-\mu)-3\mu,4(1-\mu)-6\mu\Big\}\\
&=\dfrac{72}{70+\alpha_k}\alpha_k \in (\alpha_k,2).
\end{align*}
The estimate for $C(\rho,x^*)$ in \eqref{esti6} with $\alpha_{k+1}$ in place of $\alpha_k$ follows from \eqref{hhh6}.

Similarly, by using \eqref{F_est6}, \eqref{esti6} with $\alpha_{k+1}$ in place of $\alpha_k$, \eqref{esti6_D}, and choosing $\mu=\dfrac{3\alpha_{k+1}}{26+3\alpha_{k+1}}$, we  have
\begin{align*}
D(\rho,x^*)&\leq N\big[\rho^{-3\mu+\frac{3}{2}\alpha_{k+1}(1-\mu)}+\rho^{7\mu/2+\frac{6}{7}\alpha_{k}(1-\mu)}+\rho^{7\mu/2+\frac{3}{4}\alpha_{k+1}(1-\mu)}
\\&\quad+\rho^{-3\mu+3(1-\mu)}\big]\\&\leq N\rho^{\frac{30\alpha_{k+1}}{26+3\alpha_{k+1}}}\leq N\rho^{6\alpha_{k+1}/7}
\end{align*}
since $\alpha_{k+1}\in(0,2)$.
%
%
%

We have got the following estimates for any sufficiently small $\delta$:
\begin{equation}
\int_{B^{+}(x^*,\rho)}|u(x,t)|^2\,dx \leq N \rho^{6-\delta},
\label{pp6}
\end{equation}
\begin{equation}
\int_{B^{+}(x^*,\rho)}|u|^3+|p-[p]_{x^*,\rho}|^{3/2} \,dx \leq N \rho^{3+\frac{6}{7}(2-\delta)}.
\label{ppp6}
\end{equation}

Finally, we use the elliptic regularity theory to improve the decay estimate for $\nabla u$ and then complete the proof.
we rewrite \eqref{ns6d} (in the weak sense) into
\begin{equation*}
- \Delta u_i=-\partial_j(u_iu_j)-\partial_i p+f_i.
\end{equation*}
Due to \eqref{pp6}, there exists $\rho_1 \in (\rho/2,\rho)$ such that
\begin{equation}
\int_{S^{+}(x^*,\rho_1)}|u|^2\,dx \leq N \rho^{5-\delta}.
\label{oo6}
\end{equation}
Let $v$ be the unique weak solution to the Laplace equation
\begin{equation*}
\Delta v=0 \ \ \text{in} \ \ B^{+}(x^*,\rho_1)
\end{equation*}
with the boundary condition $v=u$ on $\partial B^{+}(x^*,\rho_1)$. Since $v=0$ on the flat boundary part, it follows from the standard estimates for the Laplace equation, H\"{o}lder's inequality, and \eqref{oo6} that
\begin{align}
 &\quad\sup_{B^{+}(x^*,\rho_1/2)}|\nabla v| \leq N \rho_1^{-6}\int_{S^{+}(x^*,\rho_1)}|v|\,dx \leq N \rho^{-1-\delta/2}.
\label{uu6}
\end{align}
Denote $w=u-v.$ Then $w$ satisfies the Poisson equation
\begin{equation*}
-\Delta w_i = -\partial_j(u_iu_j)-\partial_i (p-[p]_{x^*,\rho})+f_i \quad \text{in}\quad B^{+}(x^*,\rho_1)
\end{equation*}
with the zero boundary condition. By the classical $L_p$ estimate for the Poisson equation, we have
\begin{align*}
\nonumber\|\nabla w\|_{L_{3/2}(B^{+}(x^*,\rho_1))} &\leq N \||u|^2\|_{L_{3/2}(B^{+}(x^*,\rho_1))} +N \|p-[p]_{x^*,\rho}\|_{L_{3/2}(B^{+}(x^*,\rho_1))}\\&\,\,
+N\rho_1\|f\|_{L_{3/2}(B^{+}(x^*,\rho_1))},
\end{align*}
which together with \eqref{ppp6} and the condition $f\in L_{6}$ yields
\begin{equation}
                        \label{eq10.426}
\int_{B^{+}(x^*,\rho_1)}|\nabla w|^{3/2}\,dx\le N\rho^{3+6(2-\delta)/7}.
\end{equation}
Since $|\nabla u|\leq |\nabla w|+|\nabla v|$, we combine \eqref{uu6} and \eqref{eq10.426} to obtain, for any $r \in (0,\rho/4)$, that
\begin{equation*}
\int_{B^{+}(x^*,r)}|\nabla u|^{3/2} \,dx \leq N \rho^{3+6(2-\delta)/7}+ r^{6}\rho^{-3/2-\frac{3}{4}\delta}.
\end{equation*}
Upon taking $\delta = \frac{1}{20}$ and $r=\rho^{40/39}/4$ (with $\rho$ small), we deduce
\begin{equation*}
\int_{B^{+}(x^*,r)}|\nabla u|^{3/2}\,dx \leq N r^{q},
\end{equation*}
where
\begin{equation*}q=\frac{14403}{3200}>6-\frac 3 2.
\end{equation*}
As in the proof of Theorem \ref{th2}, this together with the corresponding interior estimate obtained in \cite{Dong_12} yields, by Morrey's Lemma, that $u$ is H\"{o}lder continuous in a neighborhood of ${\hat x}\in\partial\Omega$. This completes the proof of Theorem \ref{th26d}.

Theorem \ref{mainthm6d} then follows from Theorem \ref{th26d} by applying Proposition \ref{prop16}. Finally, we can prove that Theorem \ref{mainthm6d} also holds for a $C^2$ domain similarly by following the argument in \cite{Seregin_04, Mikhailov_11}. Theorem \ref{th36d} is deduced from Theorem \ref{mainthm6d} by using the standard argument in the geometric measure theory, which is explained for example in \cite{CKN_82}.

\section*{Appendix}
\begin{proof}[Proof of Lemma \ref{aelemma}]
As before, we assume $\rho=1$. In the energy inequality \eqref{energy}, we set $t={\hat t}$ and choose a suitable smooth cut-off function $\psi$ such that
\begin{align*}
\psi \equiv 0 \ \ \text{in} \ \ Q_{{\hat t}}\setminus Q({\hat z},1),\quad  0\leq \psi \leq 1 \ \ \text{in} \ \ Q_T,\\
\psi \equiv 1 \ \ \text{in} \ \ Q({\hat z},\gamma),  \quad |\partial_t \psi|+|\nabla \psi| +|\nabla^2 \psi| \leq N\ \ \text{in} \ \ Q_{{\hat t}}.
\end{align*}
By using \eqref{energy} and because $u$ is divergence free, we get
\begin{align*}
A(\gamma)+2E(\gamma) &
\leq \dfrac{N}{\gamma^2}\Big[\int_{Q^{+}({\hat z},1)}|u|^2\,dz +\int_{Q^{+}({\hat z},1)}\big(|u|^2+2|p-[p]_{{\hat x},1}|\big)|u|\,dz\\
&\quad+\int_{Q^{+}({\hat z},1)}|f||u|\,dz\Big].
\end{align*}
Using H\"{o}lder's inequality and Young's inequality, one can obtain
\begin{equation*}
\int_{Q^{+}({\hat z},1)}|u|^2\, dz \leq\Big(\int_{Q^{+}({\hat z},1)}|u|^3\, dz\Big)^{2/3}\Big(\int_{Q^{+}({\hat z},1)}\,dz\Big)^{1/3} \leq NC^{2/3}(1),
\end{equation*}
\begin{align*}
&\int_{Q^{+}({\hat z},1)}|p-[p]_{{\hat x},1}||u| \,dz\\
&\leq \Big(\int_{Q^{+}({\hat z},1)}|p-[p]_{{\hat x},1}|^{3/2}\,dz\Big)^{2/3}
\Big(\int_{Q^{+}({\hat z},1)}|u|^3\,dz\Big)^{1/3}\\
&= D^{2/3}(1)C^{1/3}(1),
\end{align*}
and
\begin{align*}
\int_{Q^{+}({\hat z},1)}|f||u| \,dz &\leq \int_{Q^{+}({\hat z},1)}|f|^2\,dz + \int_{Q^{+}({\hat z},1)}|u|^2\,dz\\
&\leq N\Big[\int_{Q^{+}({\hat z},1)}|f|^6\,dz\Big]^{1/3} +  \int_{Q^{+}({\hat z},1)}|u|^2\,dz.
\end{align*}
Then the conclusion follows immediately.
\end{proof}

\begin{proof}[Proof of Proposition \ref{prop1}]
For any $\rho \in (0,R]$ and $\gamma \in (0,1/4)$, by using \eqref{AE_est_2} and Young's inequality,
\begin{equation*}
A(\gamma\rho)+E(\gamma \rho) \leq N\big[C^{2/3}(2\gamma\rho)+C(2\gamma\rho)+D(2\gamma\rho)+G(2\gamma\rho)\big].
\end{equation*}
Combining with \eqref{C_est} and \eqref{D_est} and using Young's inequality again, we have
\begin{align}
\nonumber&A(\gamma \rho)+E(\gamma \rho)+C(\gamma \rho)+D(\gamma \rho)\\
\nonumber&\leq N\big[\gamma^{2/3}C^{2/3}(\rho)+\gamma^{9/4}D(\rho)+\gamma C(\rho)+\gamma A(\rho)\big]\\
\nonumber&\quad +N\gamma^{-100}\big(E(\rho)+E^3(\rho)+G(\rho)\big)+N \gamma^{2/3}\\
\nonumber&\leq N\gamma^{2/3}\big[A(\rho)+E(\rho)+C(\rho)+D(\rho)\big]+N \gamma^{2/3}\\
&\quad +N\gamma^{-100}\big(E(\rho)+E^3(\rho)+G(\rho)\big).
\label{erttt}
\end{align}
It is easy to see that for any $\epsilon_0 > 0$, there are sufficiently small real numbers $\gamma \leq 1/(3N)^{3/2}$ and $\epsilon_1$ such that if \eqref{e_cond} holds then for all small $\rho$ we have
\begin{equation*}
N\gamma^{2/3}+N\gamma^{-100}\big(E(\rho)+E^3(\rho)+G(\rho)\big)<\epsilon_0/2.
\end{equation*}
By using \eqref{erttt} with a standard iteration argument, we obtain
\begin{equation*}
A(\rho_0)+E(\rho_0)+C(\rho_0)+D(\rho_0) \leq \epsilon_0
\end{equation*}
for some $\rho_0 >0$ sufficiently small. 
\end{proof}
\begin{proof}[Proof of Lemma \ref{eeeeee}] As before, we assume $\rho=1$. Define the backward heat kernel as
\begin{equation*} \Gamma(x,t)=\dfrac{1}{4\pi^2(\gamma^2+{\hat t}-t)^2}e^{-\frac{|x-{\hat x}|^2}{2(\gamma^2+{\hat t}-t)}}. \end{equation*}
In the energy inequality \eqref{energy} we put $t={\hat t}$ and choose $\psi=\Gamma\phi$, where $\phi\in C_0^\infty( B({\hat x},1)\times ({\hat t}-1,{\hat t}+1))$ is a suitable smooth cut-off functions satisfying
\begin{align}
\nonumber 0\leq \phi\leq 1 \ \ \text{in} \ \ \mathbb{R}^4\times \bR, \ \ \phi\equiv1 \ \ \text{in} \ \ Q({\hat z},1/2),\\
|\nabla\phi| \leq N, \ \ |\nabla^2 \phi| \leq N \ \ |\partial_t\phi|\leq N\quad \text{in} \ \ \mathbb{R}^4\times\bR. \label{qq}
\end{align}
By using the equality
\begin{equation*}
\Delta \Gamma + \Gamma_t =0,
\end{equation*}
we have
\begin{align} \nonumber&\int_{B^{+}({\hat x},1)}|u(x,t)|^2\Gamma(t,x)\phi(x,t) \,dx +2 \int_{Q^{+}({\hat z},1)}|\nabla u|^2 \Gamma \phi \,dz\\ \nonumber&\leq \int_{Q^{+}({\hat z},1)}\big\{|u|^2(\Gamma \phi_t+\Gamma \Delta \phi+2\nabla\phi\nabla\Gamma)\\ &\quad +(|u|^2+2|p-[p]_{{\hat x},1}|)u\cdot (\Gamma\nabla \phi+\phi\nabla \Gamma)+2|f||u||\Gamma\phi|\big\}\,dz. \label{ggg}
\end{align}
With straightforward computations, it is easy to see the following three properties:\\
(i) For some constant $c>0$, on $\bar{Q^{+}}({\hat z},\gamma)$ it holds that \begin{equation*}
\Gamma \phi = \Gamma \geq c \gamma^{-4}.
\end{equation*}
(ii) For any $z \in Q^{+}({\hat z},1)$, we have \begin{equation*} |\Gamma(z)\phi(z)| \leq N \gamma^{-4},\ \ |\phi(z)\nabla\Gamma(z)|+|\nabla\phi(z)\Gamma(z)| \leq N \gamma^{-5}. \end{equation*}
(iii) For any $z \in Q^{+}({\hat z},1)\setminus Q^{+}({\hat z},\gamma)$, we have \begin{equation*} |\Gamma(z)\phi_t(z)|+|\Gamma(z)\Delta\phi(z)|+|\nabla\phi\nabla\Gamma| \leq N.
\end{equation*}
Then these properties together with \eqref{qq} and \eqref{ggg} yield
\begin{equation*}
A(\gamma)+E(\gamma) \leq N\Big[\gamma^2A(1)+\gamma^{-3}\big(C(1)+C^{1/3}(1)D^{2/3}(1)\big)+\gamma^{-6}G(1)\Big]. \end{equation*}
Thus, the lemma is proved.
\end{proof}

\section*{Acknowledgements.} H. Dong was partially supported by the NSF under agreement DMS-1056737. X. Gu was sponsored by the China Scholarship Council for one year study at Brown University and was partially supported by the NSFC (grant No. 11171072) and the the  Innovation  Program  of  Shanghai  Municipal  Education  Commission  (grant
No. 12ZZ012). The authors would like to thank the referee for his/her very helpful comments.

\end{document}